\documentclass{amsart}
\usepackage{euscript,amsmath, amssymb, amsfonts, mathtools}
\usepackage{mathrsfs}
\usepackage{mathabx}
\usepackage{stmaryrd}
\usepackage{crossreftools}

\makeatletter
\@namedef{subjclassname@1991}{Subject}
\makeatother

\usepackage[hypertexnames = false, colorlinks=true, allcolors=blue]{hyperref}

\usepackage{tikz-cd}

\numberwithin{equation}{section}
\usepackage{enumitem}   

\newtheorem{theorem}{Theorem}[section]
\newtheorem{lemma}[theorem]{Lemma}
\newtheorem{proposition}[theorem]{Proposition}
\newtheorem{corollary}[theorem]{Corollary}

\theoremstyle{definition}
\newtheorem{definition}[theorem]{Definition}

\newtheorem{algo}{Algorithm}[section]

\theoremstyle{remark}
\newtheorem*{remark}{Remark}
\newtheorem*{remarks}{Remarks}
\usepackage{graphicx}  
\usepackage{icomma}
\usepackage{bbm}
\usepackage{tikz}
\usetikzlibrary{shapes}
\usepackage{color}
\usepackage{verbatim}
\usepackage{xurl}
\usepackage{indentfirst}
\usepackage{lipsum}
\usepackage[backend = biber]{biblatex}
\usepackage{placeins}
\usepackage{bigints}
\usepackage{subcaption}
\usepackage{appendix}
\DeclareMathOperator{\Span}{Span}
\DeclareMathOperator{\Jac}{Jac}

\DeclareMathOperator{\CC}{\mathbb C}
\DeclareMathOperator{\RR}{\mathbb R}
\DeclareMathOperator{\ZZ}{\mathbb Z}
\DeclareMathOperator{\NN}{\mathbb N}
\DeclareMathOperator{\PP}{\mathbb P}

\DeclareMathOperator{\Per}{Per}

\DeclareMathOperator{\Res}{\mathcal R}
\DeclareMathOperator{\Discr}{\mathcal D}

\usepackage{csquotes}

\begin{document}
	\title[Computation of genus 2 Kleinian hyperelliptic functions]{Computation of genus 2 Kleinian hyperelliptic functions via Richelot isogenies}
	
	\author{Matvey Smirnov}
	\address{119991 Russia, Moscow GSP-1, ul. Gubkina 8,
		Institute for Numerical Mathematics,
		Russian Academy of Sciences}
	\email{matsmir98@gmail.com}

	\begin{abstract}
		In this work we propose an algorithm that numerically evaluates Kleinian hyperelliptic functions associated with a complex curve of genus 2. This algorithm is based upon constructing a sequence of curves with Richelot isogenous Jacobians and a recurrent procedures that reduces the calculation to a degenerate curve. As a part of mentioned algorithm we propose a method of choosing a Richelot isogenous curve (among 15 possibilities) that guarantees convergence of the equations of the curves and associated Kleinian functions of weight 2 under iterations.
		
		\smallskip
		\noindent \textbf{Keywords.} Kleinian hyperelliptic functions, Richelot isogeny, Numerical algorithms.
	\end{abstract}
	\subjclass{32A08, 32A10}
	\maketitle
	
	\section{Introduction}
	This work is the final part of the cycle of papers that are devoted to giving an algorithm that computes Kleinian hyperelliptic functions associated with complex curves of genus 2. We refer to the previous parts~\cite{KleinianWeight2} and~\cite{RichelotExpression} and references therein for context and motivation.
	
	At first let us recall some of the results of~\cite{KleinianWeight2} and~\cite{RichelotExpression} (the precise facts and formulas will be stated in Section~\ref{sec:Prelim}). In~\cite{KleinianWeight2} we have associated a quadruple of special functions $\mathscr S_f = (S^f, S_{22}^f, S_{12}^f, S_{11}^f)$ (which we called Kleinian functions of weight 2) with any complex polynomial of degree $5$ or $6$ without multiple roots. These functions are closely related with the corresponding to $f$ curve of genus $2$ (namely, the curve given by the equation $y^2 = f(x)$). In particular these functions can be used to embed the Kummer surface of the mentioned curve into $\CC\PP(3)$. Moreover, in~\cite{KleinianWeight2} we presented the explicit formulas that express classical Kleinian hyperelliptic functions $\wp_{jk}^f$, $\zeta_{j}^f$, and $\sigma^f$ through $\mathscr S_f$. The paper~\cite{RichelotExpression} deals with the relation between $\mathscr S_f$ and $\mathscr S_{\hat f}$, where $\hat f$ is the equation of the curve, which has Richelot isogenous Jacobi variety with the Jacobian variety of the initial curve. In fact, we found an explicit expression of $\mathscr S_f$ through $\mathscr S_{\hat f}$.
	
	Now, using the foregoing results we return to the idea of the numerical algorithm. That is, given $f$ to compute $\mathscr S_f$ at first we find a sequence of polynomials $f^{(0)} = f$, $f^{(1)}$, $\dots$, $f^{(n)}$, where each $f^{(k)}$ is obtained via Richelot's construction from $f^{(k-1)}$. If the functions $\mathscr S_{f^{(n)}}$ can be somehow approximated, then, by using the results of~\cite{RichelotExpression}, we can calculate recursively $\mathscr S_{f^{k-1}}$ through $\mathscr S^{f^{(k)}}$ and, as the result, obtain an approximation of $\mathscr S_f$. The remaining difficulty is the requirement that $\mathscr S_{f^{(n)}}$ can be effectively approximated. Even for the similar algorithms in genus $1$ (i.e. Landen's or AGM methods; see, e.g.~\cite{Cremona} and~\cite{Smirnov}) it is known that the effectiveness (and even convergence itself) heavily depends on the choice of the next isogenous curve on each step. For the Richelot's construction there are 15 distinct choices of the next curve (in the generic case), corresponding to the partitions of the roots of $f$ (counting $\infty$ if $\deg f = 5$) into three pairs. Thus, the missing part of the desired algorithm is the method to choose such partition on each step, such that the resulting sequence of polynomials $\{f^{(n)}\}$ converges and yields convergent sequence of functions $\{\mathscr S_{f^{(n)}}\}$ with explicitly computable limit.
	
	In this work we propose the following idea. Let $D_1, D_2, D_3 \subset \CC\PP(3)$ be disjoint open discs. We say that $f$ is subordinate to the triple of discs $(D_1,D_2,D_3)$, if each of these discs contains exactly two roots of $f$. Clearly, the described situation naturally splits the roots of $f$ into three pairs. We prove that the Richelot's construction applied to $f$ (with respect to the foregoing partition into three pairs) yields a polynomials that again does not have multiple roots and is subordinate to $(D_1,D_2,D_3)$. Moreover, the iterations of the described procedure form a sequence of polynomials $f^{(n)}$ that quadratically converges to a polynomial with three double roots. Finally, we prove that the sequence $\{\mathscr S_{f^{(n)}}\}$ of associated Kleinian functions of weight 2 converges (uniformly on compact sets), and we find an expression for the limit. Based on these results we propose an algorithm to numerically compute Kleinian functions associated with curves of genus 2.
	
	The paper is organized as follows. In Section~\ref{sec:Prelim} we collect all necessary statements and formulas from~\cite{KleinianWeight2} and~\cite{RichelotExpression}. Section~\ref{sec:Iter} contains the analysis of the Richelot's construction (and its iterations) given that a polynomial is subordinate to a triple of disjoint discs. Then, in Section~\ref{sec:Limit}, we prove that the corresponding Kleinian functions of weight 2 converge and find the expression for the limit. Finally, in Section~\ref{sec:Algo} we formulate the algorithms.
	\section{Preliminaries}\label{sec:Prelim}
	
	\subsection{Curves of genus 2 and Kleinian functions}
	We shall denote the space of complex polynomials of degree $\le k$ by $\mathfrak P_k$. Given a polynomial $f$ we denote by $f_j$ the coefficient of $x^j$ in $f$. Moreover, we call a polynomial $f \in \mathfrak P_6$ {\it{admissible}} if it does not have multiple roots and $\deg f$ is equal to either $5$, or $6$. A polynomial $f$ of degree $5$ with leading coefficient $f_5 = 4$ we call a polynomial in {\it{Weierstrass form}}. With an admissible polynomial we associate a genus 2 curve $\mathcal X_f$ (whose affine part is defined by the equation $y^2 = f(x)$). On this curve we fix the $1$-forms
	\[
	\omega_1^f = \frac{dx}{y},\;\;\omega_2^f = \frac{xdx}{y},\;\;r_1^f = \frac{f_3x + 2f_4x^2 + 3f_5x^3 + 4f_6x^4}{4y}dx,\;\;r_2^f = \frac{f_5x^2 + 2f_6x^3}{4y}dx.
	\]
	The forms $\omega_1^f, \omega_2^f$ constitute a basis of the space of holomorphic $1$-forms on $\mathcal X_f$. We denote the lattice of periods of forms $\omega_1^f, \omega_2^f$ by $\Per_f \subset \CC^2$. This lattice is canonically equipped with the intersection pairing $\langle \cdot, \cdot\rangle_f$, induced from $H_1(\mathcal X_f, \ZZ)$. Moreover, given a period $w \in \Per_f$ we define $\eta^f(w) \in \CC^2$ by the rule
	\begin{equation*}
		\eta^f(w) = -\int_\gamma \begin{pmatrix} r_1^f \\ r_2^f\end{pmatrix},\text{ where } w = \int_\gamma \begin{pmatrix} \omega_1^f \\ \omega_2^f \end{pmatrix}.
	\end{equation*}
	The periods $\eta^f$ are well-defined, since $\rho_1^f$ and $\rho_2^f$ are of the second kind.
	
	We recall~\cite{KleinianWeight2} the definition of the space $\mathfrak S_f$ of {\it{Kleinian hyperelliptic functions of weight 2}}. It consists of all holomorphic functions $\phi$ on $\mathbb C^2$ that satisfy the property
	\begin{equation}\label{eqSpaceDef}
		\phi(z + w) = \exp\left[2\eta^f(w)^T\left(z + \frac{w}{2}\right)\right]\phi(z)
	\end{equation}
	for all $z \in \CC^2$ and $w \in \Per_f$. It is proved (see~\cite[Proposition~3.1~(iv)]{KleinianWeight2}) that this space is four-dimensional and an element $\phi \in \mathfrak S_f$ is uniquely determined by its order 2 Taylor expansion at zero. Therefore, $\mathfrak S_f$ is spanned by the four functions $S^f$, $S^f_{11}$, $S^f_{12}$, $S^f_{22}$, where $S^f$ is the unique element of $\mathfrak S_f$ that satisfies $S^f(z) = z_1^2 + \bar{o}(z^2)$, and the other basis elements have the expansions
	\begin{equation}\label{eqTaylorExpansions}
		S_{22}^f(z) = 2z_1z_2 + \bar{o}(z^2),\;\;
		S_{12}^f(z) = -z_2^2 + \bar{o}(z^2),\;\;
		S_{11}^f(z) = 1 + \bar{o}(z^2).
	\end{equation}
	Note, that $S_{jk}^f = S^f\wp_{jk}^f$ for all $j,k$, where $\wp_{jk}^f$ are the Kleinian $\wp$-functions associated with the curve $\mathcal X_f$ (in fact, in the paper~\cite{KleinianWeight2} the functions $S_{jk}^f$ were {\it{defined}} using direct formulas involving $\wp$-functions, while the expansions~\eqref{eqTaylorExpansions} were derived from this definition, see~\cite[Theorem~4.1]{KleinianWeight2}; here we are going to use only the expansions~\eqref{eqTaylorExpansions}).
	
	Finally, we need the relation between Kleinian functions of weight 2 and theta functions of weight 2 (see~\cite[Definition~II.1.2]{mumfordI}). Given a {\it{Riemann matrix}} $\Omega \in \CC^{2 \times 2}$ (a symmetric matrix with positive definite imaginary part) we consider the space $R_2^\Omega$ that consists of all holomorphic functions $\phi$ on $\CC^2$ such that 
	\[
	\phi(z + n + \Omega m) = \exp\left[-2i\pi m^T \Omega m -4i\pi m^T z\right]\phi(z)
	\]
	for all $z \in \CC^2$ and all $m,n \in \ZZ^2$. Assume that $a_1,a_2,b_1,b_2 \in \Per_f$ is a {\it{symplectic basis}} (that is, $\langle a_j, b_j\rangle_f = 1$ for $j = 1,2$ and $\langle a_1, a_2\rangle_f = \langle b_1, b_2\rangle_f = \langle a_1, b_2\rangle_f = \langle a_2, b_1\rangle_f = 0$), and introduce the matrices
	\[
	A = \begin{pmatrix}
		a_1 & a_2
	\end{pmatrix},\;\;
	B = \begin{pmatrix}
		b_1 & b_2
	\end{pmatrix},\;\;
	\eta_A = \begin{pmatrix}
		\eta^f(a_1) & \eta^f(a_2)
	\end{pmatrix},\;\;
	\eta_B = \begin{pmatrix}
		\eta^f(b_1) & \eta^f(b_2)
	\end{pmatrix}.
	\]
	Then it is well-known that the matrix $\Omega = A^{-1}B$ is a Riemann matrix. Moreover, the transformation $T$ defined by the rule
	\begin{equation}\label{eqTisomorphism}
		T(\phi)(z) = \exp\left[z^T (\eta_AA^{-1})z\right]\phi(A^{-1}z)
	\end{equation}
	is an isomorphism of the space $R^\Omega_2$ onto $\mathfrak S_f$ (see~\cite[Proposition~3.1]{KleinianWeight2}).
	
	\subsection{Operations with polynomials of degree 2}
	
	Here we reproduce some notation from~\cite{RichelotExpression}. That is, for $p,q,r \in \mathfrak P_2$ we define
	\[
	[p,q] = p'q - pq',\;\;\Delta(p,q,r) = \begin{pmatrix}
		p_0 & q_0 & r_0 \\
		p_1 & q_1 & r_1 \\
		p_2 & q_2 & r_2
	\end{pmatrix},
	\]
	\begin{equation*}\label{eqDiscrAndResDef}
		\Discr(p) = p_1^2 - 4p_0 p_2,\;\; \Res(p,q) = (p_2 q_0 - p_0 q_2)^2 + (p_2 q_1 - p_1 q_2)(p_0 q_1 - p_1 q_0).
	\end{equation*}
	It is easy to see that $\Discr(p)$ and $\Res(p,q)$ coincide with discriminant of $p$ and resultant of $p$ and $q$ respectively, assuming that $\deg p = \deg q = 2$. That is, if $p(x) = a(x - t_1)(x - t_2)$ and $q(x) = b(x - s_1)(x - s_2)$, then  \begin{equation}\label{eqDiscrAndResRootFormula}
		\Discr(p) = a^2(t_1 - t_2)^2,\;\;\Res(p,q) = a^2b^2(t_1 - s_1)(t_1 - s_2)(t_2 - s_1)(t_2 - s_2).
	\end{equation}
	
	The necessary properties of the introduced constructions are listed in the following propositions (for the proofs see~\cite[Subsection~2.2]{RichelotExpression}).
	
	\begin{proposition}\label{propBasicBasic}
		The following statements hold for all non-zero $p,q \in \mathfrak P_2$.
		\begin{enumerate}[label=(\roman*)]
			\item\label{BasicBasici} $\Discr(p) = 0$ if and only if $p$ has only one root (of multiplicity two).
			\item\label{BasicBasicii} $\Res(p,q) = 0$ if and only if $p$ and $q$ have a common root.
			\item\label{BasicBasiciii} $[p,q] = 0$ if and only if $p$ and $q$ are proportional.
			\item\label{BasicBasiciv} $\Discr([p,q]) = 4\Res(p,q)$.
			\item\label{BasicBasicv} Let $S$ be a M\"obius transformation, i.e.
			\[
			S(x) = \frac{ax + b}{cx + d}, \;\;\det \begin{pmatrix} a & b \\ c & d \end{pmatrix} = 1.
			\]
			Then $[(cx + d)^2p \circ S, (cx + d)^2q \circ S] = (cx + d)^2[p,q] \circ S$.
		\end{enumerate}
	\end{proposition}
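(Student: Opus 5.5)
The plan is to verify (i)--(iii) by elementary algebra and case analysis, (iv) by one polynomial identity, and (v) by reducing to generators of the M\"obius group.

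For \ref{BasicBasici}, a nonzero $p \in \mathfrak P_2$ has one of three shapes.
\begin{itemize}
\item If $p_2 \neq 0$, then \eqref{eqDiscrAndResRootFormula} gives $\Discr(p) = p_2^2(t_1-t_2)^2$. This vanishes exactly when the two roots coincide.
\item If $p_2 = 0$ and $p_1 \neq 0$, then $\Discr(p) = p_1^2 \neq 0$. Here $p$ has the finite root $-p_0/p_1$ and the root $\infty$, which are distinct.
\item If $p = p_0$ is a nonzero constant, then $\Discr(p) = 0$ and $p$ has the double root $\infty$.
\end{itemize}
So in every case ``roots'' is read in $\CC\PP(1)$, counted as in the homogeneous form $p_0 Y^2 + p_1 XY + p_2 X^2$.

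For \ref{BasicBasicii}, I interpret $\Res(p,q)$ as the resultant of the two binary quadratic forms. It is a bihomogeneous polynomial in the coefficients. When $p_2, q_2 \neq 0$ it agrees with \eqref{eqDiscrAndResRootFormula}, which I check by expanding symmetric functions of the roots. Both sides are polynomial in the coefficients, so the identity extends to all coefficients. The vanishing criterion for a common projective root is then the standard property of resultants. Alternatively, I would prove it directly, because $\Res$ is $SL_2$-invariant: I would move a root of $p$ away from $\infty$ and reduce to the generic case.

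For \ref{BasicBasiciii}, I compute $[p,q]$ explicitly:
\[
[p,q] = (p_1q_0 - p_0q_1) + 2(p_2q_0 - p_0q_2)x + (p_2q_1 - p_1q_2)x^2 .
\]
Its coefficients are, up to constants, the $2\times 2$ minors of the coefficient matrix of $p,q$. They all vanish if and only if the coefficient vectors are proportional.

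For \ref{BasicBasiciv}, I substitute these coefficients into $\Discr(\cdot) = c_1^2 - 4c_0c_2$. This gives
\[
4(p_2q_0-p_0q_2)^2 - 4(p_1q_0-p_0q_1)(p_2q_1-p_1q_2),
\]
which is exactly $4\Res(p,q)$ after rewriting $-(p_1q_0-p_0q_1) = p_0q_1 - p_1q_0$.

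For \ref{BasicBasicv}, the map $p \mapsto (cx+d)^2 p\circ S$ is a linear action of $SL_2$ on $\mathfrak P_2$. The bracket is bilinear, so it suffices to check the identity on generators of $SL_2(\CC)$:
\begin{itemize}
\item translations $x \mapsto x+b$, where it follows from the chain rule;
\item scalings $x \mapsto a^2x$ (with $d = a^{-1}$), where it is a direct check;
\item the inversion $x \mapsto -1/x$, where $(cx+d)^2 = x^2$.
\end{itemize}
For the inversion, write $\tilde p(x) = x^2 p(-1/x)$. Then
\[
\tilde p' = 2x\,p(-1/x) + p'(-1/x),
\]
so
\[
[\tilde p,\tilde q] = x^2\bigl(p'(-1/x)q(-1/x) - p(-1/x)q'(-1/x)\bigr),
\]
because the $2x$ terms cancel. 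This equals $x^2[p,q](-1/x)$.

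Finally, I need the composition compatibility of the action, namely $(c x+d)^2$ cocycle multiplicativity. That step is the only mildly delicate point, and it is routine.
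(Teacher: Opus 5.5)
Your proposal is correct. The paper gives no proof of this proposition; it only refers to \cite[Subsection~2.2]{RichelotExpression}. So there is nothing to compare your route against, and these are facts one checks by direct coefficient computation, which is what you do.

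For (i)--(iv) your reasoning is sound. Your coefficient formula for $[p,q]$ is right: it matches the explicit expression in the proof of Lemma~\ref{lemDiskSubordination}. The rank argument gives (iii), and the substitution gives (iv). Reading roots in $\CC\PP(1)$, with $\infty$ of multiplicity $2-\deg p$, is the paper's own convention.

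Two simplifications are worth making.

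In (ii), $\Res(p,q)$ as defined is literally the $4\times 4$ Sylvester determinant of $p_2x^2+p_1x+p_0$ and $q_2x^2+q_1x+q_0$ taken in formal degree $2$. You can therefore quote the common-projective-root criterion directly, without the density argument. If you use the $SL_2$ alternative instead, you must send a point that is a root of neither $p$ nor $q$ to $\infty$. Moving only a root of $p$ away from $\infty$ is not enough, since both leading coefficients must become nonzero.

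In (v), the reduction to generators is valid. The action $\rho(M)p=(cx+d)^2\,p\circ S$ satisfies $\rho(M_1M_2)=\rho(M_2)\rho(M_1)$ by the cocycle identity, and equivariance survives composition. But the reduction is unnecessary. Since $ad-bc=1$, you have $S'(x)=(cx+d)^{-2}$, so for an arbitrary $S$
\[
\frac{d}{dx}\Bigl[(cx+d)^2\,p(S(x))\Bigr] = 2c(cx+d)\,p(S(x)) + p'(S(x)).
\]
The $2c(cx+d)$ terms cancel in the bracket exactly as the $2x$ terms do in your inversion computation. This proves (v) in one line, with no generators and no cocycle bookkeeping.
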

	
	\begin{proposition}\label{propBasicProperties}
		Assume that $p,q,r \in \mathfrak P_2$ and let $\hat p = [q,r]$, $\hat q = [r,p]$, and $\hat r = [p,q]$. Then the following statements hold.
		\begin{enumerate}[label=(\roman*)]
			\item\label{BasicPropertiesi} $[\hat p, \hat q] = -2\Delta(p,q,r)r$, $[\hat q, \hat r] = -2\Delta(p,q,r)p$, and $[\hat r, \hat p] = -2\Delta(p,q,r)q$.
			\item\label{BasicPropertiesii} $\Delta(\hat p, \hat q, \hat r) = -2\Delta(p,q,r)^2$. 
			\item\label{BasicPropertiesiii} $\Res(\hat p, \hat q) = \Delta(p,q,r)^2\Discr(r)$, $\Res(\hat p, \hat r) = \Delta(p,q,r)^2\Discr(q)$, and $\Res(\hat q, \hat r) = \Delta(p,q,r)^2\Discr(p)$.
			\item\label{BasicPropertiesiv} Let $S$ be a M\"obius transformation as in Proposition~\ref{propBasicBasic}~\ref{BasicBasicv}. Then
			\[
			\Delta( (cx + d)^2p \circ S, (cx + d)^2q \circ S, (cx + d)^2r \circ S) = \Delta(p,q,r).     
			\]
			\item\label{BasicPropertiesv} Assume that $\Delta(p,q,r) \ne 0$ and that $f = pqr$ is admissible. Then the polynomial $\hat p \hat q \hat r$ is also admissible.
		\end{enumerate}
	\end{proposition}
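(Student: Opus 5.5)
Although the paper refers to \cite[Subsection~2.2]{RichelotExpression} for these facts, I would verify them directly. The approach is brute-force polynomial identity checking, organized so that Möbius covariance does most of the work.

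\emph{Setup and first identities.} Write $\hat r = [p,q] = p'q - pq'$. The $x^3$ terms cancel, so $\hat r \in \mathfrak P_2$, with coefficients
\[
\hat r_0 = p_1q_0 - p_0q_1,\quad \hat r_1 = 2(p_2q_0 - p_0q_2),\quad \hat r_2 = p_2q_1 - p_1q_2 .
\]
Thus $\hat p,\hat q,\hat r$ are, up to reordering and the factor $2$, the $2\times 2$ minors of the coefficient matrix whose columns are $p,q,r$. In other words, $(\hat p,\hat q,\hat r)$ is essentially the adjugate (cofactor) construction applied to $\Delta(p,q,r)$, twisted by a fixed bilinear form on $\mathfrak P_2$.

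\emph{Items (i) and (ii).} These follow from the classical identities $\operatorname{adj}(M)^T M = \det M\cdot I$ and $\det\operatorname{adj}M = (\det M)^2$ in dimension $3$. Concretely, I would write the coefficient vector of $[u,v]$ as $J(u\times v)$ for a fixed matrix $J$, where $\times$ is the cross product of coefficient vectors. Then I would use the vector identity
\[
(v\times w)\times(w\times u) = \det(u,v,w)\,w
\]
and track the constant matrix $J$; this produces the factor $-2$ in (i). Taking determinants gives (ii), with $\det J$ contributing the factor $-2$.

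\emph{Item (iii).} I would combine (i) with Proposition~\ref{propBasicBasic}\ref{BasicBasiciv}:
\[
4\Res(\hat p,\hat q) = \Discr([\hat p,\hat q]) = \Discr(-2\Delta r) = 4\Delta^2\Discr(r),
\]
which is exactly what is needed. The other two equalities follow by cyclic symmetry.

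\emph{Item (iv).} I would check this on generators of $SL_2$: translations $x\mapsto x+b$, scalings $x\mapsto a^2x$ (taking $d=a^{-1}$), and the inversion $x\mapsto -1/x$. Each acts on $\mathfrak P_2$ by a linear map $L$ with $\det L = 1$. For translations $L$ is unipotent, for scalings it is $\operatorname{diag}(a^{-2},1,a^2)$, and for inversion it is an antidiagonal map with determinant $1$. Since $\Delta$ is the determinant of the $3\times 3$ coefficient matrix, multiplying that matrix by $L$ preserves it.

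\emph{Item (v).} This is the main obstacle. By (iii) and the nonvanishing of $\Delta$, it suffices to show that each of $\hat p,\hat q,\hat r$ has two distinct roots and that no two of them share a root, while the degree of $\hat p\hat q\hat r$ is $5$ or $6$. Roots at infinity should be treated via Möbius invariance, using Proposition~\ref{propBasicBasic}\ref{BasicBasicv} and (iv).

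The distinctness of the roots of $\hat p$ comes from $\Discr(\hat p) = 4\Res(q,r) \neq 0$, since $q$ and $r$ have no common root when $f$ is admissible. The absence of common roots between $\hat p$ and $\hat q$ is (iii), since $\Discr(r)\ne0$. One subtlety is that $\deg f = 5$ means one factor, say $r$, has degree $1$. For this reason I would first apply a Möbius map that moves all roots, together with $\infty$ where relevant, to finite points. This makes all of $p,q,r$ genuinely quadratic with distinct roots. Propositions~\ref{propBasicBasic}\ref{BasicBasicv} and (iv) then show that the conclusions transform back.

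Finally, I must ensure that $\hat p\hat q\hat r \neq 0$ and that its degree is at least $5$, i.e.\ that at most one root lies at infinity. This again follows from pairwise coprimality in the projective sense together with nondegeneracy: each of $\hat p,\hat q,\hat r$ is nonzero by Proposition~\ref{propBasicBasic}\ref{BasicBasiciii}, since the linear independence of $p,q,r$ (from $\Delta\ne0$) makes each pair non-proportional.
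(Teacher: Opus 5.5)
Your proof is correct. The paper gives no argument for this proposition; it quotes it without proof from \cite[Subsection~2.2]{RichelotExpression}. So your proposal is a self-contained verification rather than a variant of an in-text proof.

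What your organization buys is that almost nothing has to be expanded by hand. Write $[u,v]=J(u\times v)$, where $J$ is the symmetric map $(c_1,c_2,c_3)\mapsto(-c_3,2c_2,-c_1)$ with $\det J=-2$. The identity $(Ja)\times(Jb)=\det(J)\,J^{-T}(a\times b)$, together with $J^T=J$, turns (i) into $(q\times r)\times(r\times p)=\Delta(p,q,r)\,r$ with scalar prefactor $\det J=-2$. It likewise turns (ii) into $\det J\cdot\Delta(p,q,r)^2$. This is exactly the ``tracking of $J$'' you describe. State the symmetry of $J$ explicitly, since without it $J\,J^{-T}$ would not cancel. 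Item (iii) then follows from (i) and Proposition~\ref{propBasicBasic}~\ref{BasicBasiciv}. That proposition is a polynomial identity, so it also covers degenerate triples where some $\hat p$ vanishes.

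Two small points deserve a sentence when written out.
\begin{itemize}
\item In (iv), note that $S\mapsto\bigl(p\mapsto(cx+d)^2p\circ S\bigr)$ is substitution into the associated binary quadratic form. Hence it reverses composition, determinants multiply, and checking generators suffices.
\item In (v), the preliminary M\"obius normalization of $p,q,r$ is harmless but unnecessary. Proposition~\ref{propBasicBasic}~\ref{BasicBasici} and~\ref{BasicBasicii} already refer to roots in $\CC\PP(1)$; for instance $\Res(p,q)=0$ for two linear polynomials, which share the root $\infty$. The normalization also does not by itself keep $\hat p,\hat q,\hat r$ away from $\infty$.
\end{itemize}

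The step that actually finishes (v) is your last one. First, $\Discr(\hat p)=4\Res(q,r)\ne0$ rules out $\deg\hat p=0$. Second, the pairwise non-vanishing of the resultants from (iii) rules out two of $\hat p,\hat q,\hat r$ having degree at most $1$, since they would then share the root $\infty$. Hence $\hat p\hat q\hat r$ has degree $5$ or $6$ and six distinct roots in $\CC\PP(1)$.
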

	
	\subsection{Richelot isogeny}
	
	Till the end of this section we fix $p,q,r \in \mathfrak P_2$ such that $f = pqr$ is admissible and $\Delta(p,q,r) \ne 0$. From Proposition~\ref{propBasicProperties}~\ref{BasicPropertiesv} follows that the polynomial
	\[
	\hat {f} = \frac{1}{4\Delta(p,q,r)}\hat p\hat q \hat r
	\]
	is admissible, where $\hat p = [q,r]$, $\hat q = [r, p]$, and $\hat r = [p,q]$. As in~\cite{RichelotExpression} we shall refer to $\hat f$ as the result of the {\it{Richelot construction associated with the decomposition}} $f = pqr$. It is known (see, e.g.~\cite{Humbert},~\cite{BostMestre}), that between the curves $\mathcal X_f$ and $\mathcal X_{\hat f}$ there is a canonical correspondence, that induces a $(2,2)$-isogeny between the corresponding Jacobi varieties. We shall use the following statements about the relationship between $\mathcal X_f$ and $\mathcal X_{\hat f}$ (see~\cite[Subsection~2.2]{RichelotExpression}).
	
	\begin{proposition}\label{propRichProperties}
		Let $f$ and $\hat{f}$ be as defined above. Then the following statements hold.
		\begin{enumerate}[label=(\roman*)]
			\item\label{Richi} $2\Per_f \subset \Per_{\hat f} \subset \Per_{f}$ and for all $w_1,w_2 \in \Per_{\hat f}$ the equality $\langle w_1, w_2 \rangle_{f} = 2\langle w_1, w_2\rangle_{\hat f}$ holds.
			\item\label{Richii} Assume that $a_1,a_2,b_1,b_2 \in \Per_f$ is a symplectic basis such that $a_1,a_2\in \Per_{\hat f}$. Then $a_1,a_2,2b_1,2b_2$ is a symplectic basis in $\Per_{\hat f}$. Moreover, such basis always exists.
		\end{enumerate}
	\end{proposition}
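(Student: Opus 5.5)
The plan is to obtain both statements from the Richelot correspondence between $\mathcal X_f$ and $\mathcal X_{\hat f}$. Let $\mathcal Y\subset \mathcal X_f\times\mathcal X_{\hat f}$ be the curve cut out by the classical relations
\[
p(x)\hat p(z)+q(x)\hat q(z)=0,\qquad yw=(x-z)\,p(x)\hat p(z),
\]
with projections $\pi_1,\pi_2$. First I would verify by direct computation that the trace $\pi_{2*}\pi_1^*$ sends $\omega_j^f$ to $\omega_j^{\hat f}$ for $j=1,2$. The normalizing factor $1/(4\Delta(p,q,r))$ in $\hat f$ is chosen exactly so that no extra constant appears. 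Dually, I would check that $\pi_{1*}\pi_2^*\omega_j^{\hat f}=2\omega_j^f$. This explicit residue and trace computation is the main obstacle: keeping track of the constants, and of the branch of $w$ along the fibres, needs care. I would first try to simplify it with a M\"obius normalization, using Proposition~\ref{propBasicBasic}~\ref{BasicBasicv} and Proposition~\ref{propBasicProperties}~\ref{BasicPropertiesiv}.

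Next, define $\psi=\pi_{1*}\pi_2^*\colon H_1(\mathcal X_{\hat f},\ZZ)\to H_1(\mathcal X_f,\ZZ)$ and $\phi=\pi_{2*}\pi_1^*$ in the opposite direction. By the projection formula,
\[
\int_{\psi\gamma}\omega^f=\int_\gamma\omega^{\hat f},\qquad \int_{\phi\delta}\omega^{\hat f}=2\int_\delta\omega^f .
\]
The first identity gives $\Per_{\hat f}\subset\Per_f$ and the second gives $2\Per_f\subset\Per_{\hat f}$.

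For the pairing, the induced map $\mathrm{Jac}(\mathcal X_{\hat f})\to\mathrm{Jac}(\mathcal X_f)$ is a $(2,2)$-isogeny. Its kernel is a maximal isotropic subgroup of the $2$-torsion, so the principal polarization of $\mathcal X_f$ pulls back to twice the principal polarization of $\mathcal X_{\hat f}$. On $H_1$ this says $\langle\psi a,\psi b\rangle_f=2\langle a,b\rangle_{\hat f}$, which completes \ref{Richi}. As a cross-check, I would verify $\phi\circ\psi=2$ and compare intersection numbers using the fact that $\mathcal Y\to\mathcal X_f$ is a double cover.

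For \ref{Richii} I would argue purely lattice-theoretically from \ref{Richi}.

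1. Both forms are unimodular on their lattices. Comparing discriminants of $\langle\cdot,\cdot\rangle_f$ restricted to $\Per_{\hat f}$ (which equals $2\langle\cdot,\cdot\rangle_{\hat f}$, of discriminant $2^4$) with the discriminant $1$ on $\Per_f$ gives $[\Per_f:\Per_{\hat f}]^2=16$. Hence the index is $4$.
2. Since $\langle\cdot,\cdot\rangle_f$ takes only even values on $\Per_{\hat f}$, the image $L=\Per_{\hat f}/2\Per_f$ is an isotropic plane in the symplectic space $\Per_f/2\Per_f\cong\mathbb F_2^4$, i.e. a Lagrangian.
3. Given a symplectic basis with $a_1,a_2\in\Per_{\hat f}$, the lattice spanned by $a_1,a_2,2b_1,2b_2$ lies in $\Per_{\hat f}$ and has index $4$ in $\Per_f$, so it equals $\Per_{\hat f}$. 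Then \ref{Richi} gives $\langle a_j,2b_k\rangle_{\hat f}=\tfrac12\langle a_j,2b_k\rangle_f=\delta_{jk}$. The pairings $\langle a_1,a_2\rangle_{\hat f}$ and $\langle 2b_1,2b_2\rangle_{\hat f}$ vanish for the same reason, so this is a symplectic basis of $\Per_{\hat f}$.
4. For existence, $\mathrm{Sp}_4(\mathbb F_2)$ acts transitively on Lagrangian planes and the reduction map $\mathrm{Sp}_4(\ZZ)\to\mathrm{Sp}_4(\mathbb F_2)$ is surjective. Therefore some symplectic basis of $\Per_f$ has $a_1,a_2$ reducing to a basis of $L$. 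Such $a_1,a_2$ lie in $\Per_{\hat f}$, because $\Per_{\hat f}$ is the full preimage of $L$.
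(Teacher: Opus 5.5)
The paper does not prove this proposition. It quotes it from \cite[Subsection~2.2]{RichelotExpression}, which rests on the classical Humbert/Bost--Mestre correspondence. So your outline reconstructs that standard background rather than competing with an in-paper argument. Your part (ii) is complete and correct as a consequence of (i):
\begin{itemize}
\item the discriminant comparison gives index $4$;
\item $\Per_{\hat f}/2\Per_f$ is a Lagrangian plane because $\langle\cdot,\cdot\rangle_f$ is even on $\Per_{\hat f}$;
\item the span of $a_1,a_2,2b_1,2b_2$ lies in $\Per_{\hat f}$ with the same index, hence equals it;
\item existence follows from transitivity of $\mathrm{Sp}_4(\mathbb F_2)$ on Lagrangian planes together with surjectivity of $\mathrm{Sp}_4(\ZZ)\to\mathrm{Sp}_4(\mathbb F_2)$.
\end{itemize}

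In (i), the trace identities carry all the content and are only announced. They are the same classical input the paper uses in Proposition~\ref{propPeriodsRelationWithSubordination}. Beyond that, two things need repair.

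\textbf{The equation for $\mathcal Y$.} Your second equation has the wrong constant for the paper's normalization. The expression $p(x)\hat p(z)+q(x)\hat q(z)+r(x)\hat r(z)$ is alternating and trilinear in $(p,q,r)$. Evaluating it at $(1,x,x^2)$ shows that it equals $-\Delta(p,q,r)(x-z)^2$. Hence on $p(x)\hat p(z)+q(x)\hat q(z)=0$ you get $f(x)\hat f(z)=\tfrac14(x-z)^2p(x)^2\hat p(z)^2$, so the relation must be $yw=\tfrac12(x-z)p(x)\hat p(z)$. As you wrote it, which is the form belonging to $\hat p\hat q\hat r/\Delta$, the equations force $(x-z)p(x)\hat p(z)=0$, and $\mathcal Y$ is a finite set. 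This constant is not cosmetic: with the $1/\Delta$ normalization, the constants $1$ and $2$ in your trace identities are swapped, and so are the two inclusions in (i).

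\textbf{The polarization step.} The sentence deriving the pairing relation from a Lagrangian kernel is not a proof. You never show the kernel is isotropic. Even granting that, it only says $2\lambda_{\hat f}$ descends to \emph{some} principal polarization of $J_f$, not necessarily the one coming from $\mathcal X_f$. Your \emph{cross-check} is the real argument, and it needs nothing else:
\begin{itemize}
\item the projection formula $\langle\pi_{1*}\alpha,\beta\rangle_f=\langle\alpha,\pi_1^*\beta\rangle_{\mathcal Y}$, and its analogue for $\pi_2$, gives $\langle\psi a,\psi b\rangle_f=\langle a,\phi\psi b\rangle_{\hat f}$;
\item $\phi\psi=2$ on $H_1(\mathcal X_{\hat f},\ZZ)$ follows from your two trace identities, because the period map is injective.
\end{itemize}

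\textbf{Only one trace computation is needed.} Apply the construction to the decomposition $\hat f=\bigl(\hat p/(4\Delta)\bigr)\hat q\,\hat r$. Proposition~\ref{propBasicProperties} gives $[\hat q,\hat r]=-2\Delta p$, $[\hat r,\hat p/(4\Delta)]=-q/2$, $[\hat p/(4\Delta),\hat q]=-r/2$ and $\Delta(\hat p/(4\Delta),\hat q,\hat r)=-\Delta/2$. The output is $f/4$, and its correspondence curve is exactly the transpose of the corrected $\mathcal Y$ after $y\mapsto y/2$. Since $\omega_j^{f/4}=2\omega_j^f$, the identity $\pi_{1*}\pi_2^*\omega_j^{\hat f}=\pm2\omega_j^f$ is just the first trace identity for this decomposition.
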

	
	Finally, we state the relationships between Kleinian functions of weight 2 associated with $f$ and $\hat f$, that was obtained in~\cite{RichelotExpression}. In order to do this we need some more notation. At first we introduce
	\begin{equation*}
		\mu_{jklm}(p,q,r) = \hat p_j p_k p_lq_mr_m + \hat q_j q_k q_lp_mr_m + \hat r_j r_k r_lp_mq_m,
	\end{equation*}
	where indices $j,k,l,m$ vary over the set $\{0,1,2\}$. Moreover, let \[
	\begin{split}
		\psi_0(p,q,r) = & \; 4\mu_{0002}(p,q,r) + \mu_{2110}(p,q,r) + \mu_{2001}(p,q,r), \\
		\psi_2(p,q,r) = & -4\mu_{2220}(p,q,r) - \mu_{0112}(p,q,r) - \mu_{0221}(p,q,r).
	\end{split}  
	\]
	Finally, let the matrix $\mathfrak H(p,q,r) \in \CC^{2 \times 2}$ be defined by the equality
	\begin{multline}\label{eqChiFormula}
		8\mathfrak H^{p,q,r} = \\ \begin{pmatrix}
			p_0q_1r_1 + p_1q_0r_1 + p_1q_1r_0 + \dfrac{\psi_0(p,q,r)}{\Delta(p,q,r)} & -\dfrac{\mu_{1021}(p,q,r)}{\Delta(p,q,r)} \\
			-\dfrac{\mu_{1021}(p,q,r)}{\Delta(p,q,r)} & p_2q_1r_1 + p_1q_2r_1 + p_1q_1r_2 - \dfrac{\psi_2(p,q,r)}{\Delta(p,q,r)}
		\end{pmatrix}
	\end{multline}
	
	With the foregoing definitions we now can formulate the following result (which is a combination of \cite[Lemma~3.3]{RichelotExpression} with the expression~\eqref{eqChiFormula} for the matrix $\mathfrak H(p,q,r)$ given in \cite[Subsection~4.2]{RichelotExpression}).
	
	\begin{proposition}\label{propEtaTransformation}
		For all $w \in \Per_{\hat f}$ the equality
		\begin{equation}\label{eqEtaTransformation}
			\eta^f(w) = 2\eta^{\hat f}(w) + \mathfrak H^{p,q,r}w
		\end{equation}
		holds.
	\end{proposition}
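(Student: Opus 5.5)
The plan is to realise the $(2,2)$-isogeny through the classical Richelot correspondence and transport differentials of the second kind along it. Concretely, I would consider the curve $\mathcal C \subset \mathcal X_f \times \mathcal X_{\hat f}$ cut out by
\[
p(x)\hat p(z) + q(x)\hat q(z) = 0,\qquad y\hat y = p(x)\hat p(z)(x - z)
\]
(up to the normalising constant $4\Delta(p,q,r)$ built into $\hat f$). This is the standard description in~\cite{Humbert,BostMestre}. Its two projections $\pi, \hat\pi$ induce a trace map $\mathrm{Tr} = \hat\pi_*\pi^*$ from meromorphic $1$-forms on $\mathcal X_f$ to those on $\mathcal X_{\hat f}$, together with the dual map on cycles. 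The first step is to identify this map on cycles with the inclusion $\Per_{\hat f} \subset \Per_f$ of Proposition~\ref{propRichProperties}. Equivalently, I would check by a direct computation that $\mathrm{Tr}(\omega_j^f) = \omega_j^{\hat f}$ for $j = 1,2$, with the normalisation chosen so that the factor is exactly $1$. Then for any cycle $\hat\gamma$ on $\mathcal X_{\hat f}$ with period $w$, the cycle $\hat\pi^*\hat\gamma$ pushed to $\mathcal X_f$ has the same $\omega$-period $w$. The scaling $\langle\cdot,\cdot\rangle_f = 2\langle\cdot,\cdot\rangle_{\hat f}$ then comes from the degree of the correspondence, and it fixes the factor $2$ below.

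The second step is to compute $\mathrm{Tr}(r_j^f)$. Since $r_j^f$ is of the second kind with poles only over $x = \infty$, its trace is again of the second kind on $\mathcal X_{\hat f}$. Therefore it can be written as
\[
\mathrm{Tr}(r_j^f) = c\, r_j^{\hat f} + \sum_k h_{jk}\,\omega_k^{\hat f} + d g_j
\]
with $g_j$ meromorphic. The coefficient $c$ I expect to be forced to equal $2$. One way to see this is to compare principal parts at the points over infinity. A cleaner way is Legendre's relation: the $\eta$-periods must be compatible with the intersection forms on $\Per_f$ and $\Per_{\hat f}$, and these are related by the factor $2$ in Proposition~\ref{propRichProperties}~\ref{Richi}.

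Integrating over $\hat\gamma$ then kills the exact term and, using the definition of $\eta$ with its minus sign, gives
\[
\eta^f(w) = 2\eta^{\hat f}(w) + \mathfrak H w,\qquad \mathfrak H = -(h_{jk}).
\]

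The main obstacle is the explicit identification of $\mathfrak H$ with the matrix in~\eqref{eqChiFormula}. Symmetry of $\mathfrak H$ follows abstractly from the Legendre relations on both curves combined with Proposition~\ref{propRichProperties}~\ref{Richii}, but the actual entries require a concrete computation. I would first reduce by a M\"obius transformation, using Propositions~\ref{propBasicBasic}~\ref{BasicBasicv} and~\ref{propBasicProperties}~\ref{BasicPropertiesiv}, to a convenient normal form of $p,q,r$. Then I would compute the Laurent expansions of $\mathrm{Tr}(r_j^f)$ at the points of $\mathcal X_{\hat f}$ over $z = \infty$, parametrising the fibre of $\hat\pi$ by solving the quadratic $p(x)\hat p(z) + q(x)\hat q(z) = 0$ in $x$. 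Matching these expansions against those of $2r_j^{\hat f}$ plus combinations of $\omega^{\hat f}$ and derivatives of explicit functions determines the $h_{jk}$. Simplifying the outcome into the $\mu_{jklm}$ and $\psi_0,\psi_2$ notation is where I expect most of the effort to go.
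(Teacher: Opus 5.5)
\emph{Note first that this paper does not prove the proposition.} It imports it from \cite{RichelotExpression}: the relation itself from \cite[Lemma~3.3]{RichelotExpression}, and the explicit entries \eqref{eqChiFormula} from \cite[Subsection~4.2]{RichelotExpression}.

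Your skeleton is sound as far as it goes.
\begin{itemize}
\item The trace identity $\mathrm{Tr}(\omega^f)=\omega^{\hat f}$ realises $\Per_{\hat f}\subset\Per_f$ literally.
\item $\mathrm{Tr}(r_j^f)$ is of the second kind, since pull-back and trace preserve vanishing residues.
\item Integrating over $\hat\gamma$ gives $\eta^f(w)=C\eta^{\hat f}(w)-hw$.
\end{itemize}
You should not assume a scalar in front of $r^{\hat f}$; allow a general $2\times2$ matrix $C$. Then the Legendre relation $w_1^T\eta(w_2)-\eta(w_1)^Tw_2=\kappa\langle w_1,w_2\rangle$ does the work. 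Here $\kappa$ is the same universal constant for $f$ and $\hat f$. Combine it with $\langle\cdot,\cdot\rangle_f=2\langle\cdot,\cdot\rangle_{\hat f}$ and with the fact that the vectors $(w,\eta^{\hat f}(w))$, $w\in\Per_{\hat f}$, span $\CC^4$. This forces $C=2I$ and $h=h^T$. So you do get $\eta^f(w)=2\eta^{\hat f}(w)+\mathfrak Hw$ for \emph{some} symmetric $\mathfrak H$.

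\textbf{The gap.} The proposition asserts the relation for the specific matrix $\mathfrak H^{p,q,r}$ of \eqref{eqChiFormula}. That identification is the real content, and you only announce it. The procedure you sketch for it would also not work as described, for three reasons.
\begin{itemize}
\item \emph{The poles are elsewhere.} The poles of $\mathrm{Tr}(r_j^f)$ do not lie over $z=\infty$ (your coordinate on $\mathcal X_{\hat f}$). They lie over the two roots of $p_2\hat p(z)+q_2\hat q(z)$, i.e.\ the values of $z$ paired with $x=\infty$. Since the $z^2$-coefficient of this polynomial is $r_2(p_1q_2-p_2q_1)$, $z=\infty$ is among them only when that quantity vanishes.
\item \emph{The holomorphic component is global.} The $\omega^{\hat f}$-component of a de Rham class cannot be read off from Laurent expansions at the points over $z=\infty$. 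That only works once you have an explicit $g_j$ making $\mathrm{Tr}(r_j^f)-2r_j^{\hat f}-dg_j$ holomorphic on all of $\mathcal X_{\hat f}$. A workable substitute is the residue pairing $\sum_P\operatorname{res}_P(\Phi\psi)$, with $\Phi$ a local primitive. By the Legendre relations, pairing $\mathrm{Tr}(r_j^f)$ against $r_k^{\hat f}$ isolates $h_{jk}$ up to the universal constant. It collects contributions from both sets of points, not just those over $z=\infty$.
\item \emph{The M\"obius reduction is not justified.} Propositions~\ref{propBasicBasic}~\ref{BasicBasicv} and~\ref{propBasicProperties}~\ref{BasicPropertiesiv} only concern $[\cdot,\cdot]$ and $\Delta$. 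The forms $r^f$ are not covariant. Already under a translation $x\mapsto x+b$, $r_2^f$ differs from the $r_2$ of the translated polynomial by a holomorphic form with coefficients built from $b$, $f_5$, $f_6$. Consequently $\eta^f$, $\eta^{\hat f}$ and $\mathfrak H$ all acquire $f$-dependent symmetric corrections. Before passing to any normal form, you would have to prove that \eqref{eqChiFormula} transforms by the same law. Only the scalings $x\mapsto\lambda x$ are harmless.
\end{itemize}
Until $-h$ is actually computed and matched entrywise with \eqref{eqChiFormula}, your argument establishes only the existence of a symmetric $\mathfrak H$.
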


	Finally, we introduce the holomorphic mapping $\mathscr S_f:\CC^2 \to \CC^4$ defined by the formula
	\begin{equation*}
		\mathscr S_f(z) = \begin{pmatrix}
			S^f(z) & S_{22}^f(z) & S_{12}^f(z) &  S_{11}^f(z)
		\end{pmatrix}^T.
	\end{equation*}
	Now we state the main result of the paper~\cite{RichelotExpression}, which gives a formula to express $\mathscr S_f(z)$ through $\mathscr S_{\hat f}(z)$.

	\begin{theorem}\label{thMainTransformation}
		Let $\mathfrak A_{p,q,r}$, $\mathfrak A_{p,q,r}^{(22)}$, $\mathfrak A_{p,q,r}^{(12)}$, and $\mathfrak A_{p,q,r}^{(11)}$ be the $4 \times 4$ matrices defined in~\cite[Definition~4.5]{RichelotExpression}. Then the equality
		\begin{equation}\label{eqSMainTransform}
			\mathscr S_f(z) = -\frac{\exp\left(z^T\mathfrak H^{p,q,r}z\right)}{32\Delta(p,q,r)^3}\begin{pmatrix}
				\left(\mathscr S_{\hat f}(z)\right)^T \mathfrak A_{p,q,r}\mathscr S_{\hat f}(z) \\
				\left(\mathscr S_{\hat f}(z)\right)^T \mathfrak A_{p,q,r}^{(22)}\mathscr S_{\hat f}(z) \\
				\left(\mathscr S_{\hat f}(z)\right)^T \mathfrak A_{p,q,r}^{(12)}\mathscr S_{\hat f}(z) \\
				\left(\mathscr S_{\hat f}(z)\right)^T \mathfrak A^{(11)}_{p,q,r}\mathscr S_{\hat f}(z)
			\end{pmatrix}
		\end{equation} holds for all $z \in \CC^2$.
	\end{theorem}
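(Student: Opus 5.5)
This is the main theorem of~\cite{RichelotExpression}, and the matrices $\mathfrak A$ are defined only there, so my proof would follow its structure. The key tool is the uniqueness statement for $\mathfrak S_f$: an element of this four-dimensional space is determined by its order~2 Taylor expansion at zero. The plan is to show that each right-hand component of~\eqref{eqSMainTransform} lies in $\mathfrak S_f$, and then to check that its second-order jet agrees with that of the corresponding element of the basis $S^f,S^f_{22},S^f_{12},S^f_{11}$.

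\emph{Step 1: quasi-periodicity.} Fix a quadratic form $Q(\phi)=\phi^T\mathfrak A\phi$ in $\phi=\mathscr S_{\hat f}(z)$ and set $F(z)=\exp(z^T\mathfrak H^{p,q,r} z)\,Q(\mathscr S_{\hat f}(z))$. For $w\in\Per_{\hat f}$, each entry of $\mathscr S_{\hat f}$ picks up the factor $\exp[2\eta^{\hat f}(w)^T(z+w/2)]$, so $Q$ picks up its square, namely $\exp[4\eta^{\hat f}(w)^T(z+w/2)]$. The exponential prefactor contributes $\exp[2w^T\mathfrak H z+w^T\mathfrak H w]$. Since $\mathfrak H$ is symmetric, Proposition~\ref{propEtaTransformation} combines these into exactly $\exp[2\eta^f(w)^T(z+w/2)]$. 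Hence $F$ satisfies~\eqref{eqSpaceDef} on the sublattice $\Per_{\hat f}$. By Proposition~\ref{propRichProperties}~\ref{Richi}, $\Per_{\hat f}$ has index $4$ in $\Per_f$, with $\Per_f/\Per_{\hat f}\cong(\ZZ/2)^2$ generated by $b_1,b_2$ for a basis as in~\ref{Richii}. It remains to impose invariance under $b_1,b_2$. The space of functions with the $\Per_{\hat f}$-law and the quadratic products of $\mathscr S_{\hat f}$ are both described via the isomorphism $T$ of~\eqref{eqTisomorphism}. On the theta side, the classical duplication/addition formula expresses $R_2^{\Omega}$ theta functions (for $\Omega=A^{-1}B$) as quadratic expressions in $R_2^{2\Omega}$ second-order thetas $\theta[\epsilon](2z,2\Omega)$. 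The $b_j$-translations act on these by sign characters, so the $\mathfrak S_f$-invariant quadratic combinations form a subspace that can be written down explicitly. Step~1 therefore reduces to showing that the quadratic forms $\mathfrak A$ of \cite[Definition~4.5]{RichelotExpression} lie in that invariant subspace.

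\emph{Step 2: normalisation by Taylor jets.} With membership in $\mathfrak S_f$ established, I would expand $\mathscr S_{\hat f}(z)$ to order $2$ using~\eqref{eqTaylorExpansions}. I also need the order-4 terms of $S^{\hat f}_{11}$ (products such as $S_{11}S$ reach order $2$ only through the constant $1$), but these come from the known expansion of $\sigma$ or of the $\wp$-functions. Together with $\exp(z^T\mathfrak H z)=1+z^T\mathfrak H z+\dots$, this gives the jets of the right-hand side. Comparing them with the expansions of $S^f,S^f_{22},S^f_{12},S^f_{11}$ fixes the constant $-1/(32\Delta^3)$ and the entries of $\mathfrak A$ as polynomials in $p,q,r$, using Proposition~\ref{propBasicProperties}~\ref{BasicPropertiesi}--\ref{BasicPropertiesiii}.

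\emph{Main obstacle.} I expect the hardest part to be Step~1's invariance under the nontrivial cosets $b_1,b_2$, which is equivalent to identifying which quadratic combinations of $\mathscr S_{\hat f}$ are genuinely $\Per_f$-quasiperiodic. My first attempt would be to reduce, via the Möbius covariance in Propositions~\ref{propBasicBasic}~\ref{BasicBasicv} and~\ref{propBasicProperties}~\ref{BasicPropertiesiv}, to a normal form such as $p=x^2-a$, $q=x$-type. In that normal form the invariant quadratic space can be computed through the explicit half-period translations of the Kummer surface of $\mathcal X_{\hat f}$, given by the 2-torsion points corresponding to the factors $\hat p,\hat q,\hat r$, whose action on $\mathscr S_{\hat f}$ is by explicit $4\times4$ matrices built from $\hat p,\hat q,\hat r$. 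Invariance of $\phi^T\mathfrak A\phi$ then becomes a finite matrix identity.
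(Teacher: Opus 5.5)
\textbf{Verdict.} The paper gives no proof of this statement. It is imported as the main theorem of \cite{RichelotExpression}, so there is no argument here to compare yours with. Your frame is sound: show each component lies in $\mathfrak S_f$, then conclude by uniqueness from $2$-jets. Your $\Per_{\hat f}$ computation is also correct. However, the proposal has a genuine gap: it stops exactly where the content of the theorem lies.

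\textbf{The missing step.} Nothing in the proposal shows that the \emph{specific} forms of \cite[Definition~4.5]{RichelotExpression} satisfy~\eqref{eqSpaceDef} for $w=b_1,b_2$. Along your lines you would still have to do the following.
(a) Define $M_j$ by $\mathscr S_{\hat f}(z+b_j)=\exp[\eta^{\hat f}(2b_j)^Tz]\,M_j\,\mathscr S_{\hat f}(z)$, so that $M_j^2=\exp[\eta^{\hat f}(2b_j)^Tb_j]\,I$. Then use Proposition~\ref{propEtaTransformation} for $2b_j$ to see that the requirement is exactly invariance of $\phi^T\mathfrak A\phi$ under the involution $\tilde M_j=\exp[-\eta^{\hat f}(2b_j)^Tb_j/2]\,M_j$.
(b) Show that $b_1,b_2$ represent the $2$-torsion attached to $\hat p,\hat q,\hat r$; Proposition~\ref{propRichProperties} only gives lattice inclusions.
(c) Compute $\tilde M_j$ explicitly in the coordinates $\mathscr S_{\hat f}$.
(d) Verify the resulting identities for all four forms.
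None of this is done. Your suggested shortcut through a M\"obius normal form would also need transformation laws of $\mathscr S_f$, $\mathfrak H^{p,q,r}$ and $\mathfrak A_{p,q,r}$ under $\mathrm{SL}_2$. Propositions~\ref{propBasicBasic}~\ref{BasicBasicv} and~\ref{propBasicProperties}~\ref{BasicPropertiesiv} do not provide these; they are statements about $[\cdot,\cdot]$ and $\Delta$ only.

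\textbf{Why Step~2 cannot fill the hole.} All components of $\mathscr S_{\hat f}$ are even, and $S^{\hat f},S^{\hat f}_{22},S^{\hat f}_{12}$ vanish to second order. So if $\phi^T\mathfrak A\phi=\sum_{i\le j}c_{ij}\phi_i\phi_j$, the $2$-jet of $e^{z^T\mathfrak Hz}\phi^T\mathfrak A\phi$ is $c_{44}(1+z^T\mathfrak Hz)+c_{14}z_1^2+2c_{24}z_1z_2-c_{34}z_2^2$.
\begin{itemize}
\item Jets therefore see only $4$ of the $10$ coefficients of each form. They fix the constant and the $S_{11}^{\hat f}$-column, not ``the entries of $\mathfrak A$''.
\item The other six coefficients are governed solely by the missing $b_j$-invariance.
\item Incidentally, no order-$4$ terms of $S_{11}^{\hat f}$ are needed.
\end{itemize}

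\textbf{A smaller slip.} The second-order thetas spanning $R_2^{2\Omega}$ are $\theta[\epsilon](2z,4\Omega)$, not $\theta[\epsilon](2z,2\Omega)$. The shift $u\mapsto u+\Omega e_j$, which corresponds to $b_j$, permutes their characteristics. It is the shift by $e_j/2$, i.e.\ by $a_j/2$, that acts by signs.

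\textbf{A more economical route.} Given the Kummer-level formula \cite[Theorem~4.4]{RichelotExpression}, which this paper uses in Section~\ref{sec:Algo}, you could argue as follows.
\begin{itemize}
\item Take the identity $[\mathscr S_f(z)]=[\mathcal Q(\mathscr S_{\hat f}(z))]$ in $\CC\PP(3)$, where $\mathcal Q$ are the quadrics realising the isogeny. They have no common zero on the Kummer surface, so the ratio is an entire function $h$.
\item Your $\Per_{\hat f}$ computation then shows that $h(z)e^{-z^T\mathfrak Hz}$ is $\Per_{\hat f}$-periodic, hence constant.
\item The $S_{11}$-jet fixes that constant.
\end{itemize}
Either way, the explicit Kummer-level input is exactly what your proposal leaves unproved.
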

	\begin{remark}
		We do not give here an explicit formula for the matrices $\mathfrak A_{p,q,r}$ and $\mathfrak A_{p,q,r}^{(jk)}$, since it is very cumbersome and is useful only for the computer realisation of the algorithms. If needed, this formula can be easily derived from \cite[Definition~4.5]{RichelotExpression}.
	\end{remark}

	\section{Iterations of the Richelot construction}\label{sec:Iter}
	
	Due to the nature of the isogeny based numerical methods, we need to iterate the Richelot construction to achieve a curve, for which the associated special functions can be effectively approximated. Therefore, on each step of the algorithm we need to specify the choice of a decomposition $f = pqr$, or, in other words, the partition of the roots of $f$ into three pairs. In this section we propose such specification.
	
	\subsection{Polynomials subordinate to a triple of disks}
	\begin{definition}
		A set $D \subset \CC\PP(1)$ is called an open disc (resp. a closed disc) if it is an image of the open unit disc (resp. the closed unit disc) in $\CC$ under a suitable M\"obius transformation.
	\end{definition}
	
	\begin{definition}
		Let $\mathfrak D = (D_1, D_2, D_3)$ be a triple of disjoint open disks in $\CC\PP(1)$. We say that non-zero $f \in \mathfrak P_6$ is subordinate to the triple $\mathfrak D$ if each of the disks $D_j$ contains exactly two roots of $f$ counting multiplicity (we consider infinity to be the root of $f$ of multiplicity $6 - \deg f$). 
	\end{definition}

	\begin{proposition}\label{propDiskSubordination}
		Let $\mathfrak D = (D_1, D_2, D_3)$ be a triple of disjoint open disks and let $p_1,p_2,p_3 \in \mathfrak P_2$ be non-zero polynomials such that both roots of $p_j$ are contained in $D_j$, $j = 1,2,3$. Then $\Delta(p_1, p_2, p_3) \ne 0$ and the polynomial $[p_2,p_3][p_3, p_1][p_1,p_2]$ is subordinate to $\mathfrak D$. 
	\end{proposition}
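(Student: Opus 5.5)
The plan is to reduce everything to one geometric fact about Möbius involutions and discs. Given non-zero $p,q\in\mathfrak P_2$ that are not proportional, the map $x\mapsto p(x)/q(x)$ is a degree-two rational map $\CC\PP(1)\to\CC\PP(1)$ (roots at $\infty$ are counted as usual). Let $\sigma$ be its deck involution, which is a Möbius involution. The roots of $[p,q]$ (with $\infty$ counted as a root of multiplicity $2-\deg[p,q]$) are exactly the critical points of $p/q$, i.e.\ the two fixed points $e_1,e_2$ of $\sigma$. The involution $\sigma$ swaps the two roots of $p$ and swaps the two roots of $q$. More generally, it swaps the two roots of every member $ap+bq$ of the pencil. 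By Proposition~\ref{propBasicBasic}~\ref{BasicBasicv} and Proposition~\ref{propBasicProperties}~\ref{BasicPropertiesiv}, all statements are invariant under Möbius changes of variable. So I may choose coordinates with $e_1=0$, $e_2=\infty$, where $\sigma(x)=-x$.

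The key lemma is: \emph{if an open disc $D\subset\CC\PP(1)$ contains a pair $\{u,-u\}$ (allowing $u=-u$, i.e.\ $u\in\{0,\infty\}$), then $D$ contains $0$ or $\infty$.} To prove it, split according to the shape of $D$ in $\CC$.
\begin{itemize}
\item If $D$ is a bounded disc, it is convex, so it contains $0=\tfrac12(u+(-u))$.
\item If $D$ is an open half-plane $\{\operatorname{Re}(\bar c x)>d\}$ not containing $0$, then $d\ge 0$. It cannot contain both $u$ and $-u$, since their values of $\operatorname{Re}(\bar c x)$ sum to zero.
\item Otherwise $D$ is the exterior of a closed disc together with $\infty$, so it contains $\infty$.
\end{itemize}

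\emph{Step 1: where the roots of $[p_2,p_3]$ lie.} First, $p_2$ and $p_3$ are not proportional because their roots lie in disjoint discs, so $[p_2,p_3]\neq0$ by Proposition~\ref{propBasicBasic}~\ref{BasicBasiciii}. The disc $D_2$ contains the $\sigma$-orbit of the roots of $p_2$, so by the lemma it contains a fixed point of $\sigma$. The same holds for $D_3$ with the roots of $p_3$. This covers the degenerate case of a double root $t$ of $p_2$: then $t$ is itself fixed, i.e.\ $t$ is a root of $[p_2,p_3]$. Since $D_2\cap D_3=\emptyset$, the disc $D_2$ contains exactly one of $e_1,e_2$ and $D_3$ contains the other. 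By symmetry, $[p_3,p_1]$ has one root in each of $D_3,D_1$, and $[p_1,p_2]$ has one root in each of $D_1,D_2$. Counting the six roots of the product (with $\infty$ of multiplicity $6-\deg$), each $D_j$ contains exactly two of them. This is subordination, provided the product is non-zero, which we have just seen.

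\emph{Step 2: $\Delta(p_1,p_2,p_3)\neq0$.} Suppose instead that $p_1,p_2,p_3$ are linearly dependent. Since $p_2$ and $p_3$ are independent, this gives $p_1=ap_2+bp_3$. If $a=0$, then $p_1\propto p_3$, contradicting $D_1\cap D_3=\emptyset$; the case $b=0$ is symmetric. Otherwise $p_1$ lies in the pencil of $p_2,p_3$, so its roots form a $\sigma$-orbit $\{u,-u\}$ inside $D_1$. By the lemma, $D_1$ then contains $0$ or $\infty$. But Step 1 shows these points lie in $D_2$ and $D_3$, contradicting disjointness.

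I expect the main obstacle to be bookkeeping rather than the idea. One issue is making the identification "roots of $[p,q]$ = fixed points of the deck involution" rigorous, including roots at $\infty$ and double roots; I would handle this via Proposition~\ref{propBasicBasic}~\ref{BasicBasicv}, moving $\infty$ to a finite point first. The other is the half-plane case of the lemma after normalization.
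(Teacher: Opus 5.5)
Your proof is correct, but it takes a different route from the paper.

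\textbf{The paper's route.} The paper proves a two-region lemma (Lemma~\ref{lemDiskSubordination}): if the roots of $p$ lie in a disc $D$ and those of $q$ lie in $\CC\PP(1)\setminus\overline{D}$, then $[p,q]$ has one root on each side. The proof normalizes $D$ to the right half-plane and reads off from the explicit formula that the leading coefficient $e_1+e_2-h_1-h_2$ never vanishes. It excludes roots on the imaginary axis by moving such a root to $\infty$ with a half-plane-preserving M\"obius map. It then concludes by continuity of roots over the connected parameter space, checked on the example $p=(x-1)^2$, $q=(x+1)^2$. For $\Delta\ne 0$ the paper uses the identity $[[p_1,p_2],[p_2,p_3]]=-2\Delta(p_1,p_2,p_3)\,p_2$ from Proposition~\ref{propBasicProperties}~\ref{BasicPropertiesi}. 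If $\Delta=0$, this makes $[p_1,p_2]$ and $[p_2,p_3]$ proportional, although one has a root in $D_1$ and the other does not.

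\textbf{Your route and what it buys.} You identify the roots of $[p,q]$ with the fixed points of the deck involution of $p/q$. You then reduce everything to the elementary fact that an open disc containing $\{u,-u\}$ contains $0$ or $\infty$. This removes the deformation argument and the explicit formula. It also gives a sharper statement: for $p,q$ without common root, any disc containing both roots of some nonzero member of the pencil $\langle p,q\rangle$ contains a root of $[p,q]$. As a result the same lemma yields $\Delta\ne0$ without the identity of Proposition~\ref{propBasicProperties}~\ref{BasicPropertiesi}. The cost is the dictionary between $[p,q]$ and the critical points of $p/q$, including the chart change at $\infty$, which you correctly flag.

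\textbf{One correction to your setup.} Non-proportionality alone does not make $p/q$ of degree two. For example, $p=x(x-1)$ and $q=x(x-2)$ give a degree-one map, and $p=x$, $q=1$ share the root $\infty$. You need $p$ and $q$ to have no common root, including at $\infty$. This does hold for $p_2,p_3$, since their roots lie in disjoint discs, so your argument stands. The identity $\Discr([p,q])=4\Res(p,q)\ne0$ then confirms that the two roots of $[p,q]$ are distinct, so they match the two fixed points of the involution exactly.
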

	
	The proof of Proposition~\ref{propDiskSubordination} is based on the following simple lemma.
	
	\begin{lemma}\label{lemDiskSubordination}
		Let $D$ be an open disc and let $D' = \CC\PP(1) \setminus \overline{D}$. Assume that $p,q \in \mathfrak P_2$ and that both roots of $p$ belong to $D$ and both roots of $q$ belong to $D'$. Then the polynomial $[p,q]$ has one root in $D$ and one root in $D'$.
	\end{lemma}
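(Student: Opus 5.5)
Since both $[\cdot,\cdot]$ and the disc hypothesis are compatible with M\"obius transformations (Proposition~\ref{propBasicBasic}~\ref{BasicBasicv}), the first step is to normalize. Choose $S$ with $\det = 1$ mapping the unit disc onto $D$, and replace $p,q$ by $\tilde p = (cx+d)^2 p\circ S$ and $\tilde q = (cx+d)^2 q\circ S$. The roots of $\tilde p$ are the $S$-preimages of the roots of $p$, counting $\infty$ appropriately, and likewise for $\tilde q$. By Proposition~\ref{propBasicBasic}~\ref{BasicBasicv}, the roots of $[\tilde p,\tilde q]$ are the $S$-preimages of the roots of $[p,q]$. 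So it suffices to treat the case where $D$ is the unit disc $\{|x|<1\}$ and $D' = \{|x|>1\}\cup\{\infty\}$.

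Further normalization is available via a M\"obius automorphism of the unit disc, or via scaling. Assume first that $q$ has degree 2, so both of its roots are finite with modulus $>1$; the case where $q$ has a root at infinity is handled by continuity or a small rotation/automorphism. Note also that $p$ has degree exactly 2, since its roots lie in $D$. The key observation is that the roots of $[p,q] = p'q - pq'$ are exactly the points $x$ with $p'(x)q(x) - p(x)q'(x) = 0$. Equivalently, on the set where $q \neq 0$, these are the critical points of the rational function $R = p/q$, which has degree 2 as a map $\CC\PP(1)\to\CC\PP(1)$. A degree-2 rational map has exactly two critical points. By Proposition~\ref{propBasicBasic}~\ref{BasicBasicii}, $p$ and $q$ have no common root, so $R$ genuinely has degree 2 and $[p,q]\neq 0$.

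The plan is then the following. Consider $R = p/q$ on $\overline{D}$. It has two zeros in $D$ and no poles in $\overline{D}$. We need to show that $R$ has exactly one critical point in $D$ and one in $D'$, and in particular none on the circle $\partial D$.

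The most direct route is the argument principle applied to $R'$. Since $R$ is holomorphic on $\overline D$ with 2 zeros in $D$, we want to show that $R'$ has exactly one zero in $D$ and none on $\partial D$. One natural approach is a Riemann--Hurwitz count: consider the preimage $U = R^{-1}(\{|w|<\varepsilon\})$ for the right level. Alternatively, one can use a more hands-on fact, namely the Gauss--Lucas-type statement for $[p,q]$. Writing
\[
\frac{[p,q]}{pq} = \frac{p'}{p}-\frac{q'}{q} = \sum_{i}\frac{1}{x-t_i} - \sum_i \frac{1}{x-s_i},
\]
where the $t_i \in D$ and the $s_i \in D'$, the idea is to show that for $x\in\partial D$ this expression is nonzero, and that along $\partial D$ it has winding number compatible with exactly one zero inside.

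The cleanest way I would try is a homotopy argument. Deform $t_1,t_2$ continuously within $D$ to $0$, and $s_1,s_2$ within $D'$ to $\infty$. Throughout the deformation, $p$ and $q$ never share a root, so $[p,q]$ never vanishes identically, and its roots move continuously on $\CC\PP(1)$. I claim no root of $[p,q]$ ever lies on $\partial D$. In the end configuration $p=x^2$ and $q=1$, we get $[p,q]=2x$, which has roots $0\in D$ and $\infty\in D'$. Hence by continuity each of $D$ and $D'$ contains exactly one root throughout the deformation, including at the start.

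The main obstacle is the claim that $[p,q]$ never vanishes on the circle $\partial D$. For this I would use a disc-preserving normalization: send $x_0\in\partial D$ to $\infty$ by a M\"obius map taking $D$ to a half-plane $H$, and $D'$ to the complementary half-plane. A root of $[p,q]$ at $\infty$ means the leading coefficient of $[p,q]$ vanishes, that is, $p_2q_1 - p_1q_2 = 0$. This is $t_1+t_2 = s_1+s_2$ after scaling, with all roots finite. But $t_1+t_2\in 2H$ is open in the half-plane and $s_1+s_2$ lies in the opposite open half-plane, so these sums cannot be equal. This gives the contradiction, and the continuity argument then completes the proof.
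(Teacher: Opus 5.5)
Your final argument is correct and is essentially the paper's proof. You rule out roots of $[p,q]$ on $\partial D$ by sending a boundary point to $\infty$ with a M\"obius map that takes $D$ and $D'$ to complementary half-planes, where the leading coefficient $t_1+t_2-s_1-s_2$ cannot vanish, and then conclude by connectedness and one explicit configuration (you use $p=x^2$, $q=1$; the paper uses $p=(x-1)^2$, $q=(x+1)^2$); the critical-point and Gauss--Lucas remarks are not needed and can be dropped.
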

	\begin{proof}
		Due to projective invariance (i.e. Proposition~\ref{propBasicBasic}~\ref{BasicBasicv}) we can assume without loss of generality that $D = \{z \in \CC: \mathrm{Re}\;z > 0\}$, so $D' = \{z \in \CC: \mathrm{Re}\;z < 0\}$. Moreover, we can assume that $p$ and $q$ are monic, so $p(x) = (x - e_1)(x - e_2)$ and $q(x) = (x - h_1)(x - h_2)$, where $e_1, e_2 \in D$ and $h_1,h_2 \in D'$. The direct calculation now shows that
		\[
		[p,q](x) = (e_1 + e_2 - h_1 - h_2)x^2 + 2(h_1h_2 - e_1e_2)x + e_1e_2(h_1 + h_2) - h_1h_2(e_1 + e_2).
		\]
		It is straigtforward to see that the leading coefficient of $[p,q]$ cannot be equal to $0$, so infinity cannot be a root of $[p,q]$. Assume for a moment that there exist $e_1,e_2 \in D$ and $h_1,h_2 \in D$ such that $[p,q]$ has a purely imaginary root $ir$, $r \in \RR$. Consider the M\"obius transformation $S(z) = (z - ir)^{-1}$. It is clear that $S(D) = D$ and $S(D') = D'$. Further, consider polynomials $\tilde p = (z - ir)^2p\circ S$ and $\tilde q = (z - ir)^2q \circ S$. By Proposition~\ref{propBasicBasic}~\ref{BasicBasiciv} the polynomial $[p,q]$ has a root at infinity, while both roots of $p$ are in $D$ and both roots of $q$ are in $D'$. We have arrived at a contradiction after assuming that $[p,q]$ has a purely imaginary root. Thus, for all $e_1,e_2 \in D$ and $h_1,h_2 \in D'$ the polynomial $[p,q]$ does not have a purely imaginary root. Thus either $[p,q]$ has both roots in $D$, or it has both roots in $D'$, or it has one root in $D$ and one root in $D'$. Due to continuity of roots one of listed possibilities holds for all polynomials $p$ and $q$, so it boils down to considering a concrete example. Let $p(x) = (x - 1)^2$ and $q(x) = (x + 1)^2$. Then $[p,q] = 4(x -1)(x + 1)$. Thus, one of the roots of $[p,q]$ belongs to $D$ and the other one to $D'$.
	\end{proof}
	
	\begin{proof}[Proof of Proposition~\ref{propDiskSubordination}.]
		From Lemma~\ref{lemDiskSubordination} it is clear that $D_1$ contains one of the roots of polynomials $[p_1,p_2]$ and $[p_3,p_1]$ and does not contain any roots of $[p_2,p_3]$. Clearly, similar statement holds for $D_2, D_3$, so the polynomial $[p_2,p_3][p_3, p_1][p_1,p_2]$ is subordinate to disks $D_1, D_2, D_3$. Now we prove that $\Delta(p_1,p_2,p_3) \ne 0$. If $\Delta(p_1,p_2,p_3) = 0$, then from Proposition~\ref{propBasicProperties}~\ref{BasicPropertiesi} it follows that $[[p_1,p_2], [p_2,p_3]] = 0$. From Proposition~\ref{propBasicBasic}~\ref{BasicBasiciii} $[p_1,p_2]$ and $[p_2,p_3]$ are proportional, so they have the same roots. But this clearly is not true, as $[p_1,p_2]$ has a root in $D_1$ and $[p_2,p_3]$ does not.
	\end{proof}
	
	Assume that a polynomial $f \in \mathfrak P_6$ is subordinate to a triple $\mathfrak D = (D_1,D_2,D_3)$. Then $f$ can be written in the form $f = p_1p_2p_3$, where $p_j \in \mathfrak P_2$ and both roots of $p_j$ are contained in $D_j$ for all $j \in \{1,2,3\}$. We define $H_{\mathfrak D}(f)$ to be the polynomial
	\[
	H_{\mathfrak D}(f) = \frac{1}{4\Delta(p_1,p_2,p_3)}[p_2, p_3][p_3, p_1][p_1, p_2].
	\]
	It is clear that the triple $(p_1,p_2,p_3)$ is determined uniquely up to the transformations $(p_1,p_2,p_3) \to (\alpha p_1, \beta p_2, \gamma p_3)$, where $\alpha, \beta, \gamma \in \CC$ and $\alpha\beta\gamma = 1$. Thus, $H_{\mathfrak D}(f)$ indeed depends only on $f$ and $\mathfrak D$. From Proposition~\ref{propDiskSubordination} it follows that $H_{\mathfrak D}(f)$ is again subordinate to the discs $\mathfrak D$ and it is an admissible polynomial if $f$ is admissible.
	
	Now let $\gamma_j$ be the circle enclosing $D_j$ (i.e. the boundary of $D_j$ oriented in the standard way). Since there exists a single-valued (meromorphic) branch of the square root of $f$ defined in a neighbourhood of the set $\CC\PP(1) \setminus (D_1 \cup D_2 \cup D_3)$ we can consider $2 \times 3$ matrices $\mathfrak W(f;\mathfrak D)$ and $\mathfrak E(f;\mathfrak D)$ defined by the equalities
	\begin{equation}\label{eqWMatrix}
		\mathfrak W(f;\mathfrak D) = \begin{pmatrix}
			\displaystyle\int_{\gamma_1} \dfrac{dx}{\sqrt{f(x)}} & \displaystyle\int_{\gamma_2} \dfrac{dx}{\sqrt{f(x)}}  & \displaystyle\int_{\gamma_3} \dfrac{dx}{\sqrt{f(x)}} \\
			\displaystyle\int_{\gamma_1} \dfrac{xdx}{\sqrt{f(x)}} & \displaystyle\int_{\gamma_2} \dfrac{xdx}{\sqrt{f(x)}} &\displaystyle\int_{\gamma_3} \dfrac{xdx}{\sqrt{f(x)}}
		\end{pmatrix},
	\end{equation}
	\begin{equation}\label{eqEMatrix}
		\mathfrak E(f;\mathfrak D) = -
		\begin{pmatrix}
			\displaystyle\int_{\gamma_1} \dfrac{\rho_1(x)dx}{4\sqrt{f(x)}} & \displaystyle\int_{\gamma_2} \dfrac{\rho_1(x)dx}{4\sqrt{f(x)}}  & \displaystyle\int_{\gamma_3} \dfrac{\rho_1(x)dx}{4\sqrt{f(x)}} \\
			\displaystyle\int_{\gamma_1} \dfrac{\rho_2(x)dx}{4\sqrt{f(x)}} & \displaystyle\int_{\gamma_2} \dfrac{\rho_2(x)dx}{4\sqrt{f(x)}}  & \displaystyle\int_{\gamma_3} \dfrac{\rho_2(x)dx}{4\sqrt{f(x)}}
		\end{pmatrix},
	\end{equation}
	where $\rho_1(x) = f_3x + 2f_4 x^2 + 3f_5 x^3 + 4f_6x^4$, $\rho_2(x) = f_5x^2 + 2f_6x^3$, and the branch of the square root is chosen to be the same in all integrals. Note that these matrices are well-defined (up to a simultaneous sign change) for all polynomials $f \in \mathfrak P_6$ that are subordinate to circles $\mathfrak D$. Further, the sum of columns of $\mathfrak W(f;\mathfrak D)$ (or $\mathfrak E(f;\mathfrak D)$) is equal to $0$. Moreover, if $f$ is admissible, then all columns of $\mathfrak W(f;\mathfrak D)$ belong to $\Per_f$ and they generate a maximal isotropic (with respect to intersection pairing) subgroup in $\Per_f$. In particular, there exists a symplectic basis $a_1,a_2,b_1,b_2 \in \Per_f$ such that $a_1$ and $a_2$ are any two given columns of $\mathfrak W(f;\mathfrak D)$. Moreover, if $f$ is admissible, then each column of the matrix $\mathfrak E(f;\mathfrak D)$ is the value of $\eta^f$ calculated at the corresponding column of $\mathfrak W(f;\mathfrak D)$.

	\begin{proposition}\label{propPeriodsRelationWithSubordination}
		Let $f$ be an admissible polynomial subordinate to $\mathfrak D = (D_1, D_2, D_3)$. Then we have
		\[
		\mathfrak W(f; \mathfrak D) = \pm \mathfrak W(H_{\mathfrak D}(f); \mathfrak D).
		\]
	\end{proposition}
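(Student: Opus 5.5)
We would prove this via the classical Richelot change of variables. The integrals defining the columns of $\mathfrak W(f;\mathfrak D)$ are periods over the cycles $\gamma_j$, and we want to show that they coincide, up to a common sign, with the corresponding integrals for $\hat f = H_{\mathfrak D}(f)$ over the \emph{same} contours $\gamma_j$. Since both $f$ and $\hat f$ are subordinate to $\mathfrak D$ (Proposition~\ref{propDiskSubordination}), both sets of integrals are defined over the same contours.

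The plan is to use Proposition~\ref{propRichProperties} and Proposition~\ref{propEtaTransformation} only indirectly, and to argue instead via the Richelot correspondence, i.e. the curve $C=\{(x,z): p_1(x)\hat p_1(z)+p_2(x)\hat p_2(z)=0\}$ (up to the standard normalization). It is known (Bost–Mestre) that on this correspondence
\[
\frac{dx}{y}+\frac{dz}{\hat y}=0,\qquad \frac{x\,dx}{y}+\frac{z\,dz}{\hat y}=0
\]
for a suitable choice of signs of $y=\sqrt{f(x)}$ and $\hat y=\sqrt{\hat f(z)}$. Moreover, the constant $\frac{1}{4\Delta}$ in $\hat f$ is exactly the one making the coefficient equal to $1$, with no extra factor. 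This is the algebraic heart, and we would verify it by a direct computation, or cite \cite{BostMestre}/\cite{RichelotExpression}. Indeed, Proposition~\ref{propRichProperties}~\ref{Richi} already encodes that the isogeny acts as the identity on $\CC^2$ with respect to the bases $\omega_1,\omega_2$, so $\Per_{\hat f}\subset\Per_f$ literally.

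Given this, the period lattices are nested inside $\CC^2$ with the identity as the analytic map. It then remains to identify which elements of $\Per_{\hat f}$ the columns of $\mathfrak W(\hat f;\mathfrak D)$ are. For this we use a continuity and rigidity argument. The set of admissible $f$ subordinate to $\mathfrak D$ is connected: it is a product of configurations of pairs in discs, minus the diagonals. Both sides of the claimed identity depend continuously (holomorphically) on $f$. Each column of $\mathfrak W(\hat f;\mathfrak D)$ lies in $\Per_{\hat f}\subset\Per_f$, so it equals an integer combination of the columns of $\mathfrak W(f;\mathfrak D)$ together with the complementary $b$-periods. By continuity and discreteness these integer coefficients are locally constant, hence constant on the connected family.

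We then compute the coefficients in a degenerate limit. Let the two roots of each $p_j$ coalesce, $p_j\to c_j(x-e_j)^2$. By Lemma~\ref{lemDiskSubordination}, the roots of $\hat f$ then also stay in $D_j$, and from the example computation in that lemma they degenerate to the same points $e_j$. In the limit the integrals over $\gamma_j$ become residues, which can be compared explicitly. Residues of $1/(\sqrt{c_1c_2c_3}(x-e_1)(x-e_2)(x-e_3))$ appear on both sides, and one checks that they agree up to sign using the formula for $\hat f$ in terms of $\Delta$. Alternatively, and more cleanly, we can note that the $b$-periods blow up logarithmically in this limit while the $\gamma_j$-integrals stay bounded. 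This forces the $b$-coefficients to vanish, so the columns of $\mathfrak W(\hat f;\mathfrak D)$ are combinations of the $a$-type columns of $\mathfrak W(f;\mathfrak D)$. The residue computation then pins down the coefficients as $\pm$ the identity.

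The main obstacle is this last identification: controlling the integer matrix in the degenerate limit, and in particular verifying the exact normalization so that the result is $\pm1$ rather than $\pm2$. The factor $\frac{1}{4\Delta}$ must be checked carefully in the residue computation. The sign is common to all columns because a single branch of $\sqrt{\hat f}$ is used throughout.
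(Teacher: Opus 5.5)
Your argument is correct in substance, but it takes a different route from the paper's. The paper never uses the inclusion $\Per_{\hat f}\subset\Per_f$. It takes from \cite{BostMestre} the equality $\mathfrak W(f;\mathfrak D)=\mathfrak W(H_{\mathfrak D}(f);\mathfrak D)$ in the case where all roots and the leading coefficient are real (after shrinking, the discs meet $\RR$ in disjoint intervals). It then extends this by the identity theorem on the polydisc $\mathbf{D}=D_1^2\times D_2^2\times D_3^2$ of ordered roots. This works because the contours $\gamma_j$ stay away from all roots. After a global branch choice ($\sqrt{f_e}\sim z^3$ at infinity, and a branch of $\sqrt{k(e)}$ for the nonvanishing leading coefficient of $H_{\mathfrak D}(f_e)$, positive on the real slice), both sides are holomorphic on all of $\mathbf{D}$, including the non-admissible locus. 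They agree on the totally real slice $\mathbf{D}\cap\RR^6$, hence everywhere. So the paper needs no $b$-periods, no monodromy and no degeneration asymptotics, at the price of relying on the explicit real-case result.

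Your route uses only the lattice inclusion of Proposition~\ref{propRichProperties}~\ref{Richi}, plus rigidity and a degeneration, and it explains conceptually why only a sign can appear. The normalization you single out as the main obstacle is immediate. For $T_j=(x-t_j)^2$ one has $[T_j,T_k]=2(t_j-t_k)(x-t_j)(x-t_k)$ and $\Delta(T_1,T_2,T_3)=2(t_2-t_1)(t_3-t_1)(t_3-t_2)$. Hence $H_{\mathfrak D}(g)=g$ for $g=c\prod_j(x-t_j)^2$; this is the computation $q=1$ in Lemma~\ref{lemConvergenceIfSmall}~\ref{ConvergenceIfSmalliii}. So $f$ and $H_{\mathfrak D}(f)$ have the same limit, and the limiting residues coincide up to the branch sign.

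Two steps need tightening.

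\textbf{Monodromy of the $b$-cycles.} You cannot choose $b_1,b_2$ continuously over the whole family of admissible $f$. A loop in which the two roots in $D_j$ wind around each other acts on the $b$-cycles by a nontrivial power of the Dehn twist about the vanishing cycle $a_j$ over $\gamma_j$. So ``constant on the connected family'' is not meaningful as stated. Instead, transport a symplectic basis along a single path from your $f$ into the degeneration, for instance by contracting each pair of roots linearly toward a point $t_j\in D_j$. Along that path the integer coefficients are constant, and that is all you actually use.

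\textbf{The blow-up statement.} ``The $b$-periods blow up'' must be used in the following form. Write a column of $\mathfrak W(H_{\mathfrak D}(f);\mathfrak D)$ as $A(n+\Omega m)$, where $A$ tends to the invertible residue matrix. Boundedness then forces $\Omega m$, and hence $m^T(\mathrm{Im}\,\Omega)m$, to stay bounded. This contradicts $m\neq 0$ only because $\mathrm{Im}\,\Omega\to\infty$ as a quadratic form. That in turn needs at least two of the cycles $a_1$, $a_2$, $-a_1-a_2$ to be pinched simultaneously; it is not enough that each $b_i$ blows up separately. You can prove this growth via the Riemann bilinear relation, exactly as in Lemma~\ref{lemQuasiReducedIfSubordinate}, applied directly to your degenerating family, so there is no circularity with the present proposition.

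With these two repairs your argument is complete.
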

	\begin{remark}
		The reason for the appearance of an unknown sign in Proposition~\ref{propPeriodsRelationWithSubordination} is due to the indeterminacy of the square root branches. By careful choice of the branches (see the proof below) it is possible to remove this indeterminacy.
	\end{remark}
	\begin{proof}
		Let us first state the idea of the proof. In~\cite[Section~3.1]{BostMestre} it is proved that the equality $\mathfrak W(f; \mathfrak D) = \mathfrak W(H_{\mathfrak D}(f); \mathfrak D)$ holds if $f$ is admissible, all the roots and the leading coefficient of $f$ are real and the branches of the square root are reasonably chosen. We extend this result to our generality by the uniqueness of analytic continuation. Now we proceed to the proof.
		
		By applying a suitable M\"obius transformation and shrinking the discs we can assume without loss of generality that $\overline{D_j} \subset \CC$ and that $D_j \cap \RR \ne \emptyset$ for all $j \in \{1,2,3\}$. Moreover, we can assume that $f$ is monic. Let $\mathbf{D} = D_1^2 \times D_2^2 \times D_3^2$, so $\mathbf{D}$ is a polydisc in $\CC^6$. For all $e \in \mathbf{D}$ we denote by $f_e$ the monic polynomial of degree $6$, whose roots are precisely the components of $e$ counting multiplicity. For all $e \in \mathbf{D}$ by $\sqrt{f_e}$ we denote the branch of the square root of $f_e$ defined in a neighborhood of $\CC\PP(1) \setminus (D_1 \cup D_2 \cup D_3)$ such that
		\[
		\lim_{|z| \to +\infty} \frac{\sqrt{f_e(z)}}{z^3} = 1.
		\]
		With this choice of the square root branch the function $e \mapsto \mathfrak W(f_e; \mathfrak D)$ is a matrix-valued analytic function on the polydisc $\mathbf{D}$. Further, let $k(e)$ denote the leading coefficient of the polynomial $H_{\mathfrak D}(f_e)$. It is easy to see that $k(e) \ne 0$ for all $e \in \mathbf{D}$ and $k(e) > 0$ if $e \in \mathbf{D} \cap \RR^6$. Thus, we can choose the square root branch of the function $k$ on $\mathbf{D}$ such that $\sqrt{k(e)} > 0$ for all $e \in \mathbf{D} \cap \RR^6$. Finally, for all $e \in \mathbf{D_e}$ we choose a square root branch of $H_{\mathfrak D}(f_e)$ such that 
		\[
		\lim_{|z| \to +\infty} \frac{\sqrt{H_{\mathfrak D}(f_e)(z)}}{\sqrt{k(e)}z^3} = 1.
		\]
		With this choice the function $e \mapsto \mathfrak W(H_{\mathfrak D}(f_e); \mathfrak D)$ also is an analytic function on $\mathbf{D}$. Moreover, with foregoing choices from~\cite[Section~3.1]{BostMestre} it is evident that the equality
		\[
		\mathfrak W(f_e; \mathfrak D) = \mathfrak W(H_{\mathfrak D}(f_e); \mathfrak D)
		\]
		holds for all $e \in \mathbf{D} \cap \RR^6$ such that $f_e$ is admissible. Thus, by identity theorem~\cite[\S~1.1]{Grauert} we conclude that this equality holds everywhere on $\mathbf{D}$.
	\end{proof}
	By combining Proposition~\ref{propPeriodsRelationWithSubordination} with Proposition~\ref{propEtaTransformation} and the formula~\eqref{eqChiFormula} we obtain the following result.
	\begin{corollary}
		Let $f$ be an admissible polynomial subordinate to $\mathfrak D = (D_1, D_2, D_3)$. If $f = p_1p_2p_3$ is the decomposition such that the roots of $p_j$ belong to $D_j$, then
		\begin{equation}\label{eqEMatrixTransform}
			\mathfrak E(f; \mathfrak D) = \\ \pm (2\mathfrak E(H_{\mathfrak D}(f);\mathfrak D) + \mathfrak H^{p_1,p_2,p_3}\mathfrak W(H_{\mathfrak D}(f); \mathfrak D)),
		\end{equation}
		where $\mathfrak H^{p_1,p_2,p_3}$ is given in~\eqref{eqChiFormula}.
	\end{corollary}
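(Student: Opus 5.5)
We derive the corollary column by column from Proposition~\ref{propEtaTransformation}, with Proposition~\ref{propPeriodsRelationWithSubordination} supplying the periods. Write $\hat f = H_{\mathfrak D}(f)$. This is exactly the Richelot construction associated with the decomposition $f = p_1p_2p_3$. By Proposition~\ref{propDiskSubordination}, $\Delta(p_1,p_2,p_3)\ne 0$, so all hypotheses of Propositions~\ref{propRichProperties} and~\ref{propEtaTransformation} hold with $(p,q,r) = (p_1,p_2,p_3)$, and $\hat f$ is admissible and subordinate to $\mathfrak D$.

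Fix $j\in\{1,2,3\}$ and let $w_j$ be the $j$-th column of $\mathfrak W(\hat f;\mathfrak D)$. Since $\hat f$ is admissible and subordinate to $\mathfrak D$, the discussion preceding Proposition~\ref{propPeriodsRelationWithSubordination} gives two facts: $w_j\in\Per_{\hat f}$, and the $j$-th column of $\mathfrak E(\hat f;\mathfrak D)$ equals $\eta^{\hat f}(w_j)$. By Proposition~\ref{propPeriodsRelationWithSubordination}, $\pm w_j$ is the $j$-th column of $\mathfrak W(f;\mathfrak D)$. Applying the same discussion to $f$, the $j$-th column of $\mathfrak E(f;\mathfrak D)$ equals $\eta^f(\pm w_j) = \pm\eta^f(w_j)$, since $\eta^f$ is additive on $\Per_f$ (it is given by integration over cycles) and hence odd.

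Now apply Proposition~\ref{propEtaTransformation} to $w_j\in\Per_{\hat f}$:
\[
\eta^f(w_j) = 2\eta^{\hat f}(w_j) + \mathfrak H^{p_1,p_2,p_3}w_j .
\]
Substituting the column identifications above and assembling the three columns gives~\eqref{eqEMatrixTransform}, where $\mathfrak H^{p_1,p_2,p_3}$ is given by~\eqref{eqChiFormula}.

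The only delicate point is the sign, namely that it is the same for all three columns. This holds because Proposition~\ref{propPeriodsRelationWithSubordination} asserts a global sign for the whole matrix, coming from a single branch choice of $\sqrt{f}$ and of $\sqrt{\hat f}$. In $\mathfrak E$ the same branches are used as in $\mathfrak W$, so the column identifications $\mathfrak E = \eta(\mathfrak W)$ are consistent for both $f$ and $\hat f$, and the one global sign carries through. A secondary check is that the columns of $\mathfrak W(\hat f;\mathfrak D)$ indeed lie in $\Per_{\hat f}$ and not merely in $\Per_f$; this follows from applying the pre-proposition discussion to the admissible polynomial $\hat f$ itself.
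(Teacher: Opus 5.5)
Your proof is correct and follows the paper's argument: the paper obtains this corollary by combining Proposition~\ref{propPeriodsRelationWithSubordination} with Proposition~\ref{propEtaTransformation}, applied column by column, which is what you do. Your remark on the sign is also right. Proposition~\ref{propPeriodsRelationWithSubordination} gives one global sign for the whole matrix, and $\mathfrak W$ and $\mathfrak E$ use the same branch of the square root, so the sign is the same in every column.
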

	
	To conclude this subsection we formulate the following fact.
	
	\begin{theorem}\label{thExistenceOfDiscs}
		Let $f$ -- be any admissible polynomial. Then there exist three disjoint open discs $D_1,D_2,D_3$ such that $f$ is subordinate to $\mathfrak D = (D_1,D_2,D_3)$. In other words, given any six-element subset $A \subset \CC\PP(1)$, there exist three disjoint open discs $D_1,D_2,D_3$, such that $A \cap D_j$ consists exactly of two points for all $j \in \{1,2,3\}$.
	\end{theorem}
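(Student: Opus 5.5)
We plan to prove the second formulation: given a six-point set $A \subset \CC\PP(1)$, we find three disjoint open discs each containing exactly two points of $A$. The first formulation follows at once. The roots of an admissible $f$, together with $\infty$ when $\deg f = 5$, form six distinct points. Subordination only counts roots inside the discs, so a partition of these six points by three disjoint discs gives exactly what is required.

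First we apply a M\"obius transformation to move one point of $A$ to $\infty$ and arrange that no two of the remaining five finite points share the same real part. This is harmless, because M\"obius transformations map open discs to open discs, and a generic rotation makes the real parts pairwise distinct. Half-planes and complements of closed discs also count as open discs in $\CC\PP(1)$.

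We then order the finite points by real part, $\mathrm{Re}\,a_1 < \mathrm{Re}\,a_2 < \dots < \mathrm{Re}\,a_5$. The natural pairing is $\{a_1,a_2\}$, $\{a_3,a_4\}$, $\{a_5,\infty\}$. Choose real numbers $t_1, t_2$ with $\mathrm{Re}\,a_2 < t_1 < \mathrm{Re}\,a_3$ and $\mathrm{Re}\,a_4 < t_2 < \mathrm{Re}\,a_5$. We take $D_3 = \{\mathrm{Re}\,z > t_2\} \cup \{\infty\}$ and $D_1 \subset \{\mathrm{Re}\,z < t_1\}$. These two are already disjoint. The middle pair lies in the strip $t_1 < \mathrm{Re}\,z < t_2$, and a strip is not a disc, so we instead take $D_2$ to be a sufficiently large disc inside the strip containing $a_3$ and $a_4$. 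Such a disc need not exist if the strip is thin and the two points are far apart vertically. This is the main obstacle, and it is why we will instead use a cleaner construction via nested discs.

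The cleaner construction picks a closed disc $K$ containing exactly four points of $A$ with the other two outside; $\CC\PP(1)\setminus K$ is then an open disc $D_3$ containing two points. Inside $K$ we need two disjoint open discs, each containing two of the four points. For this, take the two points $b_1, b_2$ of the four at maximal mutual distance; then the perpendicular bisector-type separation (a line) splits the four points $2+2$ after choosing a suitable direction. To justify this, we sweep a line across $K$ in a generic direction, so that it passes between the second and third points in the ordering by that direction. This gives two open half-planes, each containing two of the points. Intersecting each half-plane with $K$ is not a disc, so we shrink instead to small discs: for each pair $\{b,b'\}$ in a half-plane, the disc with diameter slightly larger than segment $[b,b']$ is contained in the half-plane only if the segment is far from the line. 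The remedy is to apply a M\"obius transformation sending the separating line to a circle. More simply, we argue by induction on the idea of ``smallest enclosing disc.''

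The final plan we commit to is a greedy nearest-pair argument. Let $\{x,y\}$ be a pair in $A$ (after moving everything into $\CC$) at minimal distance. The open disc $D_1$ with diameter slightly enlarged around $[x,y]$ contains only $x$ and $y$: any third point $w$ in the closed disc with diameter $[x,y]$ would satisfy $|w-x| < |x-y|$, contradicting minimality. Next, apply the inversion centered at the midpoint of $x,y$ and repeat: among the remaining four points, pick the closest pair $\{u,v\}$ and its diameter disc $D_2$. Finally we must check three things: the remaining two points lie in the complement disc $D_3$ of a closed disc containing $D_1 \cup D_2$; $D_1 \cap D_2 = \emptyset$; and minimality excludes the other points from $D_1$ and $D_2$. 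Disjointness of $D_1$ and $D_2$ is the delicate estimate, and we expect it to require choosing the M\"obius normalization so that $D_1$ is tiny compared with all other distances. In that normalization, $\{x,y\}$ shrinks to a point at the origin, and the rest is a five-point problem handled in the same way.
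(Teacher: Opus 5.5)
The paper does not prove this theorem (the proof is deferred to a separate publication), so your proposal has to stand on its own. As written, it does not.

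\textbf{What works.} Your first disc is fine: for a globally closest pair $\{x,y\}$, the Thales argument shows that the closed disc with diameter $[x,y]$ meets $A$ only in $x,y$. The step you flag as delicate, disjointness of $D_1$ and $D_2$, is also harmless. Write $x=m+h$, $y=m-h$. The parallelogram law gives $|w-m|\ge\sqrt3\,|h|$ for every other $w\in A$. So after $\zeta=1/(z-m)$ the remaining four points lie in $|\zeta|\le 1/(\sqrt3|h|)$, and the diameter disc of any two of them lies in $|\zeta|\le\sqrt{2/3}/|h|$. Choosing $D_1=\{|z-m|<r\}$ with $|h|<r<\sqrt{3/2}\,|h|$ then gives $D_1\cap D_2=\emptyset$.

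\textbf{The gap.} The real problem is the third disc, which you never construct, and there the greedy choice of partition fails. Take $A=\{1,-1,2i,-2i,50,100\}$. The closest pair is $\{1,-1\}$. Inverting at $0$ sends the rest to $\pm i/2$, $1/50$, $1/100$, whose closest pair is $\{1/50,1/100\}$. So your partition is $\{\pm1\},\{50,100\},\{\pm2i\}$, with discs $D_1\ni\pm1$, $D_2\ni 50,100$, $D_3\ni\pm 2i$.

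No three disjoint discs realize this partition. At most one of them contains $\infty$, and the others are convex in $\CC$.
\begin{itemize}
\item If $D_1$ and $D_3$ are both convex, both contain $0$.
\item If $D_1\ni\infty$, its complement is a closed disc containing $D_3\ni 0$ and $D_2\ni 50$. By convexity it contains $1\in D_1$, a contradiction.
\item If $D_3\ni\infty$, its complement is a closed disc containing $-1$ and $100$ but not $\pm2i$. This is impossible: for a centre $p+iq$, containing $-1$ but not $\pm2i$ forces $p<3/2$, while containing $100$ but not $\pm2i$ forces $p>49$.
\end{itemize}
Yet the partition $\{1,2i\},\{-1,-2i\},\{50,100\}$ is realized by the discs of radius $2.3$ about $\pm(2+2i)$ and of radius $26$ about $75$.

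\textbf{The proposed repair does not work.} A normalization in which $D_1$ is tiny compared with all other distances need not exist. If $|x-y|$ were small relative to $|y-u|$ and $|x-v|$, the cross-ratio $\frac{(x-u)(y-v)}{(x-v)(y-u)}=\bigl(1+\frac{x-y}{y-u}\bigr)\bigl(1-\frac{x-y}{x-v}\bigr)$ would be close to $1$. For $(x,y,u,v)=(1,-1,2i,-2i)$ this Möbius invariant equals $(-7+24i)/25$, at distance $1.6$ from $1$. Also, ``the rest is a five-point problem handled in the same way'' is not an argument.

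A similar problem sinks your earlier route through an arbitrary closed disc $K$ containing four of the points. For $K=\overline{\mathbb D}$ and the points $\pm(1-\varepsilon)$, $\pm i(1-\varepsilon)$, every disc inside $K$ containing two of them has centre within $O(\varepsilon)$ of $0$ and radius $1-O(\varepsilon)$. So no two such discs are disjoint.

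\textbf{What is missing.} You need a rule for choosing the pairing (and the discs) that takes all six points into account, together with a proof that the last pair then fits in a disc avoiding the other two. That is exactly the nontrivial content of the theorem.
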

	
	The proof of this theorem (even though it is elementary) is rather long and tedious, so it will be published in a separate paper.
	
	\subsection{Convergence of the iterations of the Richelot construction}
	In this subsection we analyze the behavior of the iterations of the Richelot construction subordinate to a triple of disks. To formulate the results we recall that a sequence $\{a_n\}_{n \in \NN}$ is said to {\it{converge quadratically}} if there exist constants $C, D> 0$ such that $|a_n - a| \le C\exp(-D2^n)$ for all $n \in \NN$, where $a = \lim a_n$. It is known that, if a sequence $\{a_n\}_{n \in \NN}$ fulfills the  inequalities $|a_{n+1}| \le C|a_n|^2$ for all $n$ with a suitable constant $C$ and converges to zero, then it converges quadratically. Moreover, the foregoing conditions are the most frequent source of quadratically convergent sequences, so often in literature they are used as the definition of quadratic convergence. The definition that we opted to give is more suitable for our considerations, as it behaves better with arithmetic operations. That is, we shall often use the fact that sums and products of quadratically convergent sequences also converge quadratically. More generally, quadratic convergence (in our definition) is preserved by substituting into functions that are holomorphic in a neighbourhood of the limit. Moreover, we shall use the fact the partial sums (resp. partial products) of sequences that quadratically converge to $0$ (resp. to $1$) also converge quadratically. Finally, we say that a sequence $\{a_n\}_{n \in \NN}$ converges {\it{strictly quadratically}}, if there exist positive constants $A,B,C,D$ such that $A\exp(-B2^n)\le|a_n - a| \le C\exp(-D2^n)$ for all $n$. This concept will be used in the next section, where we analyze the convergence of Kleinian functions of weight $2$ when performing iterations of the Richelot construction.
	
	\begin{theorem}\label{thConvergenceOfPolynomials}
		Let $f$ be an admissible polynomial subordinate to $\mathfrak D = (D_1, D_2, D_3)$, and consider the sequence $\left\{f^{(n)}\right\}_{n = 0}^{\infty}$ of elements of $\mathfrak P_6$ defined by the rules  
		\begin{equation}\label{eqDefRichSeq}
			f^{(0)} = f,\;\;f^{(n+1)} = H_{\mathfrak D}\left(f^{(n)}\right).
		\end{equation}
		Then the sequence $f^{(n)}$ quadratically converges to a polynomial that has three double roots, one in each circle $D_j$, $j = 1,2,3$.
	\end{theorem}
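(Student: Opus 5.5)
The plan is to follow the three factors separately. Write $f^{(n)} = p_1^{(n)}p_2^{(n)}p_3^{(n)}$, where both roots of $p_j^{(n)}$ lie in $D_j$; by Proposition~\ref{propDiskSubordination} this decomposition exists for every $n$. Then:
\begin{itemize}
\item the roots of $p_1^{(n+1)}$ are the $D_1$-root of $[p_1^{(n)},p_2^{(n)}]$ and the $D_1$-root of $[p_3^{(n)},p_1^{(n)}]$, and similarly for $j=2,3$;
\item as the scalar measure of non-degeneracy I take $t_n=\max_j d_j^{(n)}$, where $d_j^{(n)}$ is the squared distance between the two roots of $p_j^{(n)}$, measured in a M\"obius-invariant way (for instance, after a M\"obius change of variables putting all discs in $\CC$, the quantity $\Discr(p_j^{(n)})$ of the monic factor).
\end{itemize}
The proof then splits into a local quadratic estimate, a global step showing $t_n\to 0$, and control of the roots and the normalization.

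\emph{Step 1: compactness.} Choose slightly smaller discs $D_j'\Subset D_j$ that still contain the roots of $f$. The polynomial $f$ is subordinate to $(D_1',D_2',D_3')$, and the two decompositions coincide. Applying Lemma~\ref{lemDiskSubordination} with the pair $(D_j', \CC\PP(1)\setminus\overline{D_j'})$ shows that every $f^{(n)}$ has its roots in $\bigcup_j D_j'$. So all roots stay in fixed compact sets $K_j\subset D_j$. After a M\"obius normalization, Proposition~\ref{propBasicBasic}~\ref{BasicBasicv} and Proposition~\ref{propBasicProperties}~\ref{BasicPropertiesiv} let us assume $K_j\subset\CC$. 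I then parametrize monic $p_j=(x-a_j)^2-\delta_j$, with $a_j$ the midpoint of the roots and $\delta_j$ a quarter of the discriminant.

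\emph{Step 2: local quadratic estimate.} The $D_1$-root of $[p_1,p_2]$ is a symmetric function of the two roots $a_1\pm\sqrt{\delta_1}$ of $p_1$. Hence it is holomorphic in $(a_1,\delta_1)$ and in the coefficients of $p_2$; this uses the fact that it is a simple root, which is guaranteed by Lemma~\ref{lemDiskSubordination}. For $\delta_1=0$ one has $[(x-a_1)^2,p_2]=(x-a_1)\cdot(\text{linear})$, so this root equals $a_1$. Therefore it is $a_1+c_{12}\delta_1+O(\delta_1^2)$, and likewise the $D_1$-root of $[p_3,p_1]$ is $a_1+c_{13}\delta_1+O(\delta_1^2)$. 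The constants in the $O$-terms are uniform because everything ranges over the compact sets $K_j$.

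Consequently $\delta_1^{(n+1)}=O\bigl((\delta_1^{(n)})^2\bigr)$ and $a_1^{(n+1)}-a_1^{(n)}=O(\delta_1^{(n)})$. However, this expansion is only valid for small $\delta_1$. I would therefore aim for $t_{n+1}\le C t_n^2$ once $t_n$ is small, which requires first knowing that $t_n\to 0$.

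\emph{Step 3: global convergence $t_n\to 0$ (main obstacle).} This is where I expect the difficulty. My first attempt would be a Schwarz--Pick argument. View the pair of roots in $D_j$ through the hyperbolic distance $h_j$ of $D_j$, and the map $(p_1,p_2,p_3)\mapsto(\hat p$-factors$)$ as a holomorphic self-map of the bounded domain $\prod_j \mathrm{Sym}^2(D_j)$. By Step~1 this map has relatively compact image. A holomorphic self-map of a bounded domain with relatively compact image strictly contracts the Kobayashi distance by a uniform factor $\kappa<1$ on compacta. Its fixed points include the locus where all $p_j$ are squares: if $p_j=(x-a_j)^2$ for all $j$, then $[p_i,p_j]\propto(x-a_i)(x-a_j)$.

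I would try to show that the Kobayashi distance from the configuration to this fixed locus decreases by the factor $\kappa$ at each step. That gives $t_n\to0$ geometrically, and then Step~2 upgrades the rate to quadratic. If the Kobayashi argument turns out to be awkward, the fallback is a Montel/compactness argument: let $M=\max_j h_j$. Show that $M$ strictly decreases unless $M=0$, using Lemma~\ref{lemDiskSubordination} applied to the discs with respect to which the roots are extremal. Then use continuity on the compact configuration set to deduce $M_n\to 0$.

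\emph{Step 4: conclusion.} Once $t_n$ converges quadratically to $0$, the increments $a_j^{(n+1)}-a_j^{(n)}=O(t_n)$ are summable quadratically, so $a_j^{(n)}\to a_j^\ast\in K_j\subset D_j$ quadratically. It remains to treat the leading coefficient $1/(4\Delta)$ times the product of the leading coefficients of the brackets. Its ratio to the corresponding expression at the degenerate configuration with the same $a_j^{(n)}$ is $1+O(t_n)$. At a degenerate configuration the Richelot map fixes the polynomial, with leading coefficient preserved by the direct computation from Step~2. Hence the leading coefficients form a partial product of terms tending quadratically to $1$, and they converge quadratically. Together these give $f^{(n)}\to \lambda\prod_j(x-a_j^\ast)^2$ quadratically, with each double root $a_j^\ast\in D_j$.
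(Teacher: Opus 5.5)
\textbf{The gap is in Step~3, and your primary argument for it fails.} Steps~1, 2 and~4 are sound and essentially reproduce Lemma~\ref{lemConvergenceIfSmall}. The paper gets the local quadratic estimate from the resultant identity~\eqref{eqRootDiff} rather than a Taylor expansion, and checks directly that the normalizing factor equals $1$ at three double roots. Step~3 is the real content of the theorem, and two things go wrong with the Kobayashi route.
\begin{enumerate}
\item The Richelot map on $\prod_j\mathrm{Sym}^2(D_j)$ does \emph{not} have relatively compact image. By Lemma~\ref{lemStationaryIfDouble}, a double root placed arbitrarily close to $\partial D_1$ stays where it is. Step~1 only shows that each individual orbit stays in a compact set.
\item More decisively, by the same lemma every configuration of three double roots is fixed, so the fixed set is three-dimensional. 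The map preserves the Kobayashi distance between any two fixed points, so no uniform factor $\kappa<1$ exists on a compact set containing two of them. (Earle--Hamilton would even force a unique fixed point.) Non-expansion only says that the distance to the fixed locus does not increase; it does not say that it shrinks by $\kappa$.
\end{enumerate}
The paper avoids any contraction argument. For admissible $f$, Proposition~\ref{propPeriodsRelationWithSubordination} identifies the period matrix of $f^{(n)}$ with $2^n\Omega$. Thomae's formula then gives $(e^{(n,j)}_1-e^{(n,j)}_2)^2(e^{(n,k)}_1-e^{(n,k)}_2)^2\to0$ for all $j\neq k$, so at least two pairs degenerate. Degenerate starting pairs are handled by the harmonic--arithmetic mean and AGM recursions of Lemma~\ref{lemPartialConvergence}, and an $\omega$-limit argument in $\prod_jF_j^{(2)}$ combines the cases.

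\textbf{Your fallback is the right idea, but the decisive claim that $M$ strictly decreases is only asserted.} Completed, it gives a shorter, elementary proof that works for every subordinate $f$, admissible or not. Here is what it needs.
\begin{itemize}
\item Let $e_1\neq e_2$ be the roots of $p_j$ in $D_j$, let $h_j$ be their distance in the hyperbolic metric of $D_j$, and let $m$ be their hyperbolic midpoint.
\item If $U$ is an open disc with $\overline U\subset D_j$ and $e_1,e_2\in U$, then the roots of $p_k$ ($k\neq j$) lie in $\CC\PP(1)\setminus\overline U$. By Lemma~\ref{lemDiskSubordination}, both new $D_j$-roots lie in $U$.
\item Taking hyperbolic balls about $m$ of radius slightly above $h_j/2$ puts the new pair in the closed ball of radius $h_j/2$, so $h_j$ does not increase.
\item Now take the hyperbolic circles through $e_1,e_2$ lying just on either side of that ball's boundary circle; these are members of the pencil through $e_1,e_2$. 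They show that the new pair can meet this boundary circle only in $\{e_1,e_2\}$.
\item Hence equality $h_j^{\mathrm{new}}=h_j$ would force the new pair to equal $\{e_1,e_2\}$. That is impossible, since $[p_j,p_k](e_i)=p_j'(e_i)p_k(e_i)\neq0$.
\end{itemize}
So every positive $h_j$ strictly decreases. Your continuity argument on the compact invariant set from Step~1 then applies: an accumulation point $y^\ast$ of the orbit would satisfy $h_j(\text{image of }y^\ast)=h_j(y^\ast)>0$, a contradiction. Hence $h_j\to0$ for each $j$, and Steps~2 and~4 finish the proof.

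With this inserted, your route replaces the theta-function and AGM input by an elementary disc-containment monotonicity. The paper's route costs more machinery, but its identification of the periods of $f^{(n)}$ with $2^n\Omega$ is reused in Section~\ref{sec:Limit}.
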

	
	We prove Theorem~\ref{thConvergenceOfPolynomials} in several steps. The most frequent idea that we use in the proof of it is the projective invariance, which follows from Proposition~\ref{propBasicBasic}~\ref{BasicBasicv} and Proposition~\ref{propBasicProperties}~\ref{BasicPropertiesiv}.
	
	\begin{lemma}\label{lemStationaryIfDouble}
		Let $\mathfrak D = (D_1, D_2, D_3)$ be a triple of disjoint open disks and let $f \in \mathfrak P_6$ be subordinate to $\mathfrak D$. Assume that $k \in \{1,2,3\}$ and that $f$ has a double root $a \in D_k$. Then $H_{\mathfrak D}(f)$ has the same double root $a$.
	\end{lemma}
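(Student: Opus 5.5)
Write $f = p_1p_2p_3$ with the roots of $p_j$ in $D_j$, and suppose $p_k$ has the double root $a\in D_k$. By the cyclic symmetry of the construction (and since $\Delta$ only changes sign under permutations) we may take $k=1$. By Proposition~\ref{propDiskSubordination}, $\Delta(p_1,p_2,p_3)\ne 0$. Hence $H_{\mathfrak D}(f)$ is a nonzero multiple of $[p_2,p_3][p_3,p_1][p_1,p_2]$, and it is subordinate to $\mathfrak D$. By Lemma~\ref{lemDiskSubordination}, $[p_2,p_3]$ has no root in $D_1$, while $[p_3,p_1]$ and $[p_1,p_2]$ each have exactly one root in $D_1$. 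It therefore suffices to show that $a$ is a root of both $[p_3,p_1]$ and $[p_1,p_2]$. Then $a$ is the (unique) root of each of these factors in $D_1$, so it is a double root of $H_{\mathfrak D}(f)$ in $D_1$.

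The key observation is elementary. If $p_1 = c(x-a)^2$, then for any $q\in\mathfrak P_2$
\[
[p_1,q] = 2c(x-a)q - c(x-a)^2q' = c(x-a)\bigl(2q - (x-a)q'\bigr),
\]
which vanishes at $x=a$. Applying this with $q=p_2$ and with $q=p_3$ (using $[p_3,p_1]=-[p_1,p_3]$) gives the claim.

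The only subtlety is the case $a=\infty$, which arises when $\deg f<6$ and $p_1$ is a constant. Here I would use projective invariance (Proposition~\ref{propBasicBasic}~\ref{BasicBasicv}): choose a M\"obius transformation $S$ moving $a$ to a finite point. Then transform $p_1,p_2,p_3$ to $(cx+d)^2p_j\circ S$ and the discs to $S^{-1}(D_j)$. The brackets transform covariantly, so roots are mapped by $S^{-1}$, and the overall constant factor $1/(4\Delta)$ is unchanged by Proposition~\ref{propBasicProperties}~\ref{BasicPropertiesiv}, so root locations are preserved. Alternatively, one can check directly: if $p_1$ is constant, then $[p_1,q] = -p_1 q'$ has degree at most $1$, so it has a root at infinity in our convention.

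I do not expect a real obstacle. The only care needed is this bookkeeping at infinity and the fact that the two roots in $D_1$ are accounted for exactly by the two factors.
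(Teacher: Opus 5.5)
Your proof is correct and follows the paper's argument: $a$ is a root of both $[p_1,p_2]$ and $[p_3,p_1]$ because $p_1$ has the double root $a$. The disc-subordination results then force the multiplicity to be exactly two; you apply Lemma~\ref{lemDiskSubordination} factor by factor, while the paper cites Proposition~\ref{propDiskSubordination} to place the remaining four roots in $D_2\cup D_3$. Your explicit factorization $[p_1,q]=c(x-a)\bigl(2q-(x-a)q'\bigr)$ and your separate treatment of $a=\infty$ simply spell out what the paper calls straightforward.
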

	\begin{proof}
		Without loss of generality we assume that $k = 1$. It is straightforward to check from the definition of $[\cdot,\cdot]$ that if $p$ has a double root $a$, then $a$ is also a root of $[p,q]$ for all $q$. Now consider the decomposition $f = p_1p_2p_3$, where both roots of $p_j$ belong to $D_j$, $j = 1,2,3$. Then $p_1$ has the double root $a$, so $a$ is a root of both $[p_1, p_2]$ and $[p_3,p_1]$. Thus, $a$ is a root of $H_{\mathfrak D}(f)$ of multiplicity at least two. Since by Proposition~\ref{propDiskSubordination} it follows that four roots of $H_{\mathfrak D}(f)$ belong to $D_2 \cup D_3$, it follows that $a$ is the root of $H_{\mathfrak D}(f)$ of multiplicity exactly two.
	\end{proof}
	
	\begin{lemma}\label{lemConvergenceIfSmall}
		Let $\mathfrak D = (D_1, D_2, D_3)$ be disjoint open disks and let $F_j \subset D_j$ be any closed disk for $j = 1,2,3$. Then the following statements hold.
		\begin{enumerate}[label=(\roman*)]
			\item\label{ConvergenceIfSmalli} Assume that $f \in \mathfrak P_6$ is a polynomial such that $F_j$ contains exactly two roots of $f$ for some $j \in \{1,2,3\}$. Then $F_j$ contains exactly two roots of $H_{D_1,D_2,D_3}(f)$.
			\item\label{ConvergenceIfSmallii} Assume now that $D_j \subset \CC$ for $j = 1,2,3$ (i.e. none of the disks $D_j$ contains the infinity). For $f \in \mathfrak P_6$ that is subordinate to $D_1,D_2,D_3$ let $\delta_j(f) = |a - b|$, where $a$ and $b$ are the roots of $f$ that belong to $D_j$. Finally, fix $l \in \{1,2,3\}$. Then there exists $\varepsilon > 0$ such that the following statement holds. 
			For all $f \in \mathfrak P_6$ such that $F_j$ contains exactly two roots of $f$ for all $j \in \{1,2,3\}$ the condition $\delta_l(f) \le \varepsilon$ implies that both roots of $f^{(n)}$ that belong to $F_l$ converge quadratically to a common limit, where the sequence $f^{(n)}$ is defined by~\eqref{eqDefRichSeq}. Moreover, if in addition $\delta_l(f) \ne 0$, then the convergence of the sequence $\delta_l\left(f^{(n)}\right)$ to zero is strictly quadratic.
			\item\label{ConvergenceIfSmalliii} Assume again that $D_j \subset \CC$ for $j = 1,2,3$ and let $\varepsilon_1, \varepsilon_2,\varepsilon_3 > 0$ be the numbers from~\ref{ConvergenceIfSmallii} applied to $l = 1,2,3$ respectively. Assume that $F_j$ contains exactly two roots of a polynomial $f \in \mathfrak P_6$ and the inequality $\delta_j(f) \le \varepsilon_j$ holds for all $j \in \{1,2,3\}$, then the sequence $f^{(n)}$ defined in~\eqref{eqDefRichSeq} quadratically converges to a polynomial that has three double roots, one in each disk $F_j$, $j = 1,2,3$.
		\end{enumerate}
	\end{lemma}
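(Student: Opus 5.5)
Part (i) follows by applying Lemma~\ref{lemDiskSubordination} with $F_j$ in place of $D_j$. Suppose $F_j$ contains exactly two roots of $f$. Since $f$ is subordinate to $\mathfrak D$, the other four roots lie in $\bigcup_{k\ne j} D_k \subset \CC\PP(1)\setminus F_j$. Write $f=p_1p_2p_3$ as in the definition of $H_{\mathfrak D}$, so that $p_j$ has both roots in $F_j$. Take $\tilde F_j$ to be a slightly larger open disc with $F_j \subset \tilde F_j$ and $\overline{\tilde F_j}\subset D_j$; then the roots of $p_k$ for $k\ne j$ lie in $\CC\PP(1)\setminus\overline{\tilde F_j}$. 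Lemma~\ref{lemDiskSubordination} shows that each of $[p_j,p_k]$, $k\ne j$, has exactly one root in $\tilde F_j$. By Proposition~\ref{propDiskSubordination}, $[p_k,p_l]$ has no root in $D_j$. Letting $\tilde F_j$ shrink to $F_j$ (closed disc, so $F_j=\bigcap \tilde F_j$), the two roots of $H_{\mathfrak D}(f)$ lying in $D_j$ lie in $F_j$.

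For (ii) I will derive a quantitative contraction estimate $\delta_l(H_{\mathfrak D}(f))\le C\,\delta_l(f)^2$, with $C$ uniform over the compact set $K$ of monic-normalized $f$ whose roots satisfy the hypothesis. That the hypothesis persists under iteration is guaranteed by (i). The normalization is harmless: scaling each $p_j$ does not change $H_{\mathfrak D}(f)$ up to the constraint $\alpha\beta\gamma=1$, and I take the $p_j$ monic. Write the roots in $F_l$ as $m\pm d$ with $\delta_l=2|d|$. Then $p_l=(x-m)^2-d^2$, and $[p_l,q]=[(x-m)^2,q]-d^2[1,q]=[(x-m)^2,q]+d^2q'$. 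The term $[(x-m)^2,q]$ vanishes at $m$ by Lemma~\ref{lemStationaryIfDouble}, and its other root lies away from $F_l$. Let $p_k,p_m$ be the two other factors. The new roots in $F_l$ are the roots $m_1$ of $[p_l,p_k]$ and $m_2$ of $[p_l,p_m]$ in $F_l$. Both are holomorphic functions of $(m,d^2,\text{other roots})$, by simplicity of the roots and the implicit function theorem; simplicity holds because the other root is outside $D_l$.

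The key observation is that $m_1-m_2$ is a holomorphic function of $d^2$ vanishing at $d=0$. Indeed, at $d=0$ both roots equal $m$. Hence $|m_1-m_2|\le C|d|^2$ on $K$ by compactness; this is why an $\varepsilon$ is needed, since $K$ must lie in the region where the implicit function theorem applies. The same argument bounds the movement of the centre: $|m_1-m|\le C|d|^2$. Iterating, $\delta_l^{(n+1)}\le C(\delta_l^{(n)})^2$. Choosing $\varepsilon$ with $C\varepsilon<1/2$ gives $\delta_l^{(n)}\le C^{-1}(C\varepsilon)^{2^n}$, which is quadratic convergence. The centres form a Cauchy sequence with increments $O((\delta_l^{(n)})^2)$, so both roots converge quadratically to a common limit.

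The strict lower bound is where I expect the main obstacle. I need $m_1-m_2 = c\,d^2+O(d^4)$ with $|c|$ bounded below on $K$. Here $c$ is the $d^2$-coefficient, obtained by implicit differentiation. From $[(x-m)^2,q]+d^2q'=0$ near $x=m$, the root shift is $-d^2 q'(m)/\partial_x[(x-m)^2,q](m)$. Since $\partial_x[(x-m)^2,q](m) = -2q(m)$, this gives $m_i \approx m + d^2 q'(m)/(2q(m))$. Hence
\[
c=\tfrac12\Bigl(\tfrac{p_k'}{p_k}-\tfrac{p_m'}{p_m}\Bigr)(m)=\tfrac12\,\frac{[p_k,p_m](m)}{p_k(m)p_m(m)},
\]
which is nonzero because $[p_k,p_m]$ has no roots in $D_l$ by Lemma~\ref{lemDiskSubordination}. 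By compactness of $K$, $|c|\ge c_0>0$, so $\delta_l^{(n+1)}\ge c'(\delta_l^{(n)})^2$ for small $\varepsilon$. Since $\delta_l(f)\ne0$, every iterate has $\delta_l\ne0$, and $\log\delta_l^{(n)}\ge 2^n(\log\delta_l^{(0)}+\log c')-\log c'$ gives the lower bound.

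For (iii), apply (ii) for each $l$. All six roots converge quadratically, pairwise to three limits, one in each $F_j$. The leading coefficient of $H_{\mathfrak D}(f^{(n)})$, normalized by a Möbius map so that everything is finite, is a holomorphic function of the roots. Its ratio to the previous one is $1+O(\max_j\delta_j^2)$, which converges quadratically to $1$, so the partial products converge quadratically. Hence the coefficients of $f^{(n)}$ converge quadratically to a polynomial with three double roots.
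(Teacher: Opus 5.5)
Your proposal is correct. Parts (i) and (iii) follow the paper's route, but part (ii) argues differently.

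\textbf{Parts (i) and (iii).} Part (i) is the same shrinking-disc argument. Part (iii) is the same reduction: you show that the ratio of consecutive leading coefficients tends quadratically to $1$ and then take partial products.

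\textbf{Part (ii): comparison with the paper.} The paper compares the identity $\Res([p_1,p_2],[p_3,p_1])=\Delta(p_1,p_2,p_3)^2\Discr(p_1)$ from Proposition~\ref{propBasicProperties}~\ref{BasicPropertiesiii} with the root-product formula for the resultant. This gives a closed formula $a_1-b_1=\Delta^2\bigl(e_1^{(1)}-e_2^{(1)}\bigr)^2/(\cdots)$. From it the paper obtains two-sided bounds $B\delta_1^2\le\delta_1(H_{\mathfrak D}(f))\le C\delta_1^2$ on the whole compact family. The constants are explicit, built from the distances between the $F_j$ and from $\sup|\Delta|$, $\inf|\Delta|$. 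The drift of the centre is then handled by a nested-disc argument using (i).

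You instead use two ingredients:
\begin{itemize}
\item holomorphic dependence of the simple root in $D_l$ on $(m,d^2,\dots)$, together with compactness;
\item an explicit first-order coefficient $\pm\tfrac12[p_k,p_m](m)/(p_k(m)p_m(m))$, which is nonzero by Lemma~\ref{lemDiskSubordination}.
\end{itemize}
This is valid. It explains conceptually why the contraction is quadratic: everything depends on $d$ only through $d^2$. It also avoids the resultant identity. The cost is that the constants are inexplicit, and your lower bound needs $\varepsilon$ small enough to absorb the $O(d^4)$ remainder. That is harmless here, whereas the paper's exact formula gives the lower bound on all of the family.

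\textbf{Small corrections.}
\begin{itemize}
\item One derivative has the wrong sign: $\partial_x[(x-m)^2,q](m)=+2q(m)$. So the root shift is $-d^2q'(m)/(2q(m))$. This does not affect $|c|$.
\item The implicit function theorem applies on all of $K$, because the root in $D_l$ is always simple. Hence $|m_1-m_2|\le C|d|^2$ holds on $K$ with no smallness assumption, since $g(s)/s$ extends holomorphically. The $\varepsilon$ is needed only for $C\varepsilon<1/2$ and for the remainder in the lower bound.
\item In (iii), the claim that the ratio is $1+O(\max_j\delta_j^2)$ needs a check you did not make. Lemma~\ref{lemStationaryIfDouble} fixes the double roots but not the leading coefficient. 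You must verify that the ratio equals $1$ at a fully degenerate configuration. With $T_j=(x-t_j)^2$ one has $4\Delta(T_1,T_2,T_3)=8(t_2-t_1)(t_3-t_1)(t_3-t_2)$. This equals the product $8(t_2-t_3)(t_3-t_1)(t_1-t_2)$ of the leading coefficients of $[T_2,T_3]$, $[T_3,T_1]$, $[T_1,T_2]$, as the paper checks.
\item The M\"obius renormalization in (iii) is unnecessary and would change leading coefficients. Since $D_j\subset\CC$, every $f^{(n)}$ already has degree $6$ with all roots finite.
\end{itemize}
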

	\begin{proof}
		Statement~\ref{ConvergenceIfSmalli} easily follows from Proposition~\ref{propDiskSubordination}. Assume for simplicity that $j = 1$. Then it is possible to find a sequence $\left\{D_1^{(n)}\right\}_{n \in \NN}$ of open disks such that $D_1^{(1)} = D_1$, $D_1^{(1)} \supset D_1^{(2)} \supset \dots$ and $\bigcap_n D_1^{(n)} = F_1$. Now if $F_1$ contains exactly two roots of $f$, then $f$ is subordinate to the triple $\mathfrak D^{(n)} = (D_1^{(n)}, D_2, D_3)$ for all $n$. Thus, $H_{\mathfrak D}(f) = H_{\mathfrak D^{(n)}}(f)$ is again subordinate to $\mathfrak D^{(n)}$ by Proposition~\ref{propDiskSubordination}. Thus, $F_1 = \bigcap_n D_1^{(n)}$ contains exactly two roots of $H_{\mathfrak D}(f)$.
		
		To prove statement~\ref{ConvergenceIfSmallii} it clearly suffices to prove the existence of $\varepsilon_1$. Consider $f \in \mathfrak P_6$ such that $F_j$ contains exactly two roots of $f$ for $j = 1,2,3$. Assume for a moment that $f$ is monic, so 
		\[
		f(x) = \left(x - e_1^{(1)}\right)\left(x - e_2^{(1)}\right)\left(x - e_1^{(2)}\right)\left(x - e_2^{(2)}\right)\left(x - e_1^{(3)}\right)\left(x - e_2^{(3)}\right),
		\]
		where $e_1^{(j)}, e_2^{(j)} \in F_j$ for $j = 1,2,3$. Let $p_j(x) = \left(x - e_1^{(j)}\right)\left(x - e_2^{(1)}\right)$, so $f = p_1p_2p_3$ and 
		\[
		H_{\mathfrak D}(f) = \frac{1}{4\Delta(p_1,p_2,p_3)}[p_2,p_3][p_3,p_1][p_1,p_2].
		\]
		Let $a_1$ and $b_1$ be the roots of $[p_1,p_2]$ and $[p_3, p_1]$ respectively that belong to $D_1$ (by~\ref{ConvergenceIfSmalli} they in fact belong to $F_1$). Also let $a_2$ and $b_2$ denote the remaining roots of $[p_1,p_2]$ and $[p_3,p_1]$ respectively (so $a_2 \in F_2$ and $b_2 \in F_3$). By computing the leading coefficients of these polynomials explicitly we find
		\[
		\begin{split}
			[p_1,p_2](x) = \left(e_1^{(1)} + e_2^{(1)} - e_1^{(2)} - e_2^{(2)}\right)(x - a_1)(x - a_2),\\
			[p_3,p_1](x) = \left(e_1^{(3)} + e_2^{(3)} - e_1^{(1)} - e_2^{(1)}\right)(x - b_1)(x - b_2).
		\end{split}
		\]
		From these formulas and~\eqref{eqDiscrAndResRootFormula} we find that 
		\begin{multline*}
			\Res([p_1,p_2], [p_3,p_1]) = \left(e_1^{(1)} + e_2^{(1)} - e_1^{(2)} - e_2^{(2)}\right)^2\left(e_1^{(3)} + e_2^{(3)} - e_1^{(1)} - e_2^{(1)}\right)^2 \times \\
			(a_1 - b_1)(a_2 - b_1)(a_1 - b_2)(a_2 - b_2).
		\end{multline*}
		On the other hand from Proposition~\ref{propBasicProperties}~\ref{BasicPropertiesiii} we obtain
		\[
		\Res([p_1,p_2], [p_3,p_1]) = \Delta(p_1,p_2,p_3)^2\Discr(p_1) = \Delta(p,q,r)^2\left(e_1^{(1)} - e_2^{(1)}\right)^2.
		\]
		Combining the expressions for the resultant yields
		\begin{multline}\label{eqRootDiff}
			(a_1 - b_1) = \\ \frac{\Delta(p_1,p_2,p_3)^2\left(e_1^{(1)} - e_2^{(1)}\right)^2}{(a_2 - b_1)(a_1 - b_2)(a_2 - b_2)\left(e_1^{(1)} + e_2^{(1)} - e_1^{(2)} - e_2^{(2)}\right)^2\left(e_1^{(3)} + e_2^{(3)} - e_1^{(1)} - e_2^{(1)}\right)^2}.
		\end{multline}
		For distinct $j,k \in \{1,2,3\}$ let $d(j,k) = \inf \{|u - v|: u \in F_j, v \in F_k\}$ and let
		\[
		C = \sup\{ \Delta(p_1,p_2,p_3): p_j\text{ is monic and both its roots lie in } F_j\text{ for } j = 1,2,3\}.
		\]
		With these definitions we can write
		\begin{equation}\label{eqQuadraticConv}
			\delta_1(H_{\mathfrak D}(f)) \le \delta_1(f)^2 \frac{C^2}{16d(1,2)^3d(1,3)^3d(2,3)}.
		\end{equation}
		Thus, if we put $\varepsilon_1 =  8d(1,2)^3d(1,3)^3d(2,3)/C^2$ and assume that $\delta_1(f) \le \varepsilon_1$, then $\delta_1(H_{D_1,D_2,D_3}(f)) \le \delta_1(f)/2$. That is the condition $\delta_1(f) \le \varepsilon_1$ implies $\delta_1(f^{(n)}) \to 0$ when $n \to +\infty$. From~\eqref{eqQuadraticConv} it follows that this convergence is quadratic. Now assume that $\delta_l(f) \ne 0$. Then from~\eqref{eqRootDiff} follows that $\delta_l(f^{(n)}) \ne 0$ for all $n$. Moreover, if
		\[
		\tilde C = \inf\{ \Delta(p_1,p_2,p_3): p_j\text{ is monic and both its roots lie in } F_j\text{ for } j = 1,2,3\},
		\]
		then $\tilde C \ne 0$ and $\tilde{d}(j,k) = \sup \{|u - v|: u \in F_j, v \in F_k\}$
		\[
		\delta_1(H_{\mathfrak D}(f)) \ge \delta_1(f)^2 B,\text{ where } B = \frac{\tilde C^2}{16 \tilde d(1,2)^3\tilde d(1,3)^3\tilde d(2,3)}.
		\]
		This implies that $\ln(\delta_1(f^{(n+1)})) \ge 2\ln(\delta_1(f^{(n+1)})) + \ln(B)$, so the convergence is strictly quadratic. From the statement~\ref{ConvergenceIfSmallii} it remains to prove that the pair of roots of $f^{(n)}$ in $F_1$ converges to a common limit. To do this denote by $a_1^{(n)}$ and $b_1^{(n)}$ the of roots of $f^{(n)}$ in $F_1$. If there is $m$ such that $a_1^{(m)} = b_1^{(m)}$, then both sequences are stationary starting from $m$ by Lemma~\ref{lemStationaryIfDouble}, so this case is trivial. Thus, we assume that $\delta_1(f^{(n)}) > 0$ for all $n$ and let \[Y_n = \left\{v \in \CC: \left|v - \left(a_1^{(n)} + b_1^{(n)}\right)/2\right| \le \delta_1\left(f^{(n)}\right)\right\}.\] Since $\delta_1(f^{(n)}) \to 0$ and the center of the disk $Y_n$ belongs to $F_1$ we can find $m \in \NN$ such that $Y_n \subset D_1$ for all $n \ge m$. Then~\ref{ConvergenceIfSmalli} implies that $a_1^{(k)}, b_1^{(k)} \in Y_n$ if $k \ge n \ge m$. Finally, let $Z_n = \bigcap_{n \ge k \ge m} Y_k$. Then $a_1^{(n)}, b_1^{(n)} \in Z_n$ for $n \ge m$ and the sequence $Z_n$ monotonically decreases, i.e. $Z_m \supset Z_{m+1} \supset \dots$. Moreover, $\mathrm{diam}(Z_n)$ is estimated from above by $\delta_1(f^{(n)})$ so $\mathrm{diam}(Z_n) \to 0$ quadratically, when $n \to +\infty$. Now it is evident that the roots $a_1^{(n)}$ and $b_1^{(n)}$ converge quadratically to the unique point in the intersection $\bigcap_{n \ge m} Z_n$.
		
		Finally, we prove~\ref{ConvergenceIfSmalliii}. Consider $f \in \mathfrak P_6$ such that $F_j$ contains exactly two roots of $f$ and that $\delta_j(f) \le \varepsilon_j$ for $j = 1,2,3$. Let the sequence $f^{(n)}$ be defined by~\eqref{eqDefRichSeq} and write
		\begin{multline*}
			f^{(n)}(x) = \\ l_n\left(x - e_1^{(n,1)}\right)\left(x - e_2^{(n,1)}\right)\left(x - e_1^{(n,2)}\right)\left(x - e_2^{(n,2)}\right)\left(x - e_1^{(n,3)}\right)\left(x - e_2^{(n,3)}\right),
		\end{multline*}
		where $e_1^{(n,j)}, e_2^{(n,j)} \in F_j$. From~\ref{ConvergenceIfSmallii} it follows that $e_1^{(n,j)},e_2^{(n,j)} \to t_j \in F_j$ quadratically when $n \to +\infty$ for $j = 1,2,3$. Therefore, to prove~\ref{ConvergenceIfSmalliii} it remains to show that the sequence $\{l_n\}_{n \in \NN}$ converges quadratically to a non-zero limit. By a direct calculation it is easy to find that $l_{n+1} = q_{n+1}l_n$, where
		\begin{multline*}
			q_{n+1} = \frac{1}{4\Delta\left(p_1^{(n)}, p_2^{(n)}, p_3^{(n)}\right)}\left(e_1^{(n,1)} + e_2^{(n,1)} - e_1^{(n,2)} - e_2^{(n,2)}\right) \times \\ \left(e_1^{(n,3)} + e_2^{(n,3)} - e_1^{(n,1)} - e_2^{(n,1)}\right)\left(e_1^{(n,2)} + e_2^{(n,2)} - e_1^{(n,3)} - e_2^{(n,3)}\right),
		\end{multline*}
		where $p_j^{(n)}(x) = \left(x - e_1^{(n,j)}\right)\left(x - e_2^{(n,j)}\right)$. Hence, $l_{n} = q_nq_{n-1}\dots q_1l_0$, so it suffices to prove that the sequence $\{q_n\}_{n \in \NN}$ quadratically converges to $1$. It is easy to see that $q_n \ne 0$ for all $n$. Moreover, the sequence $\{q_n\}_{n \in \NN}$ is obtained by evaluating an analytic function of six variables at a sequence of points that quadratically converges, it follows that $\{q_n\}_{n \in \NN}$ quadratically converges to the value
		\[
		q = \frac{8(t_1 - t_2)(t_2 - t_3)(t_3 - t_1)}{4\Delta(T_1,T_2,T_3)},
		\]
		where $T_j(x) = (x - t_j)^2$, $j = 1,2,3$. An easy calculation shows that $q = 1$.
	\end{proof}
	\begin{lemma}\label{lemPartialConvergence}
		Let $\mathfrak D = (D_1, D_2, D_3)$ be a triple of disjoint open disks and let $f \in \mathfrak P_6$ be subordinate to disks $\mathfrak D$. Let the sequence $f^{(n)}$ be defined by~\eqref{eqDefRichSeq} and denote by $e^{(n,j)}_{1}, e^{(n,j)}_{2}$ be the roots of $f^{(n)}$ that belong to $D_j$. Then the following statements hold.
		\begin{enumerate}[label=(\roman*)]
			\item\label{PartialConvergencei} Assume that $e^{(0,j)}_{1} = e^{(0,j)}_{2}$ for $j = 2,3$. Then the sequences $\{e^{(n,1)}_{1}\}_{n \in \NN}$ and $\{e^{(n,1)}_{2}\}_{n \in \NN}$ converge to a common limit.
			\item\label{PartialConvergenceii} Assume that $e^{(0,3)}_{1} = e^{(0,3)}_{2}$. Then there exists $j \in \{1,2\}$ such that the sequences $\{e^{(n,j)}_{1}\}_{n \in \NN}$ and $\{e^{(n,j)}_{2}\}_{n \in \NN}$ converge to a common limit.
			\item\label{PartialConvergenceiii} Assume that $f$ is admissible (i.e. has no multiple roots). Then for at least two distinct $j \in \{1,2,3\}$ the sequences $\{e^{(n,j)}_{1}\}_{n \in \NN}$ and $\{e^{(n,j)}_{2}\}_{n \in \NN}$ converge to a common limit.
		\end{enumerate}
	\end{lemma}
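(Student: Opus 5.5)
The plan is to reduce each statement, by projective invariance (Proposition~\ref{propBasicBasic}~\ref{BasicBasicv} and Proposition~\ref{propBasicProperties}~\ref{BasicPropertiesiv}), to an explicit iteration in which the double roots sit at $0$ and $\infty$. Throughout I use Lemma~\ref{lemStationaryIfDouble}: a double root in $D_k$ is preserved by $H_{\mathfrak D}$. Part~\ref{PartialConvergencei} should be the classical arithmetic–harmonic mean iteration. Part~\ref{PartialConvergenceii} is the hard step, and I would deduce~\ref{PartialConvergenceiii} from~\ref{PartialConvergenceii} by a limiting argument.

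\textbf{Part (i).} Let $t_2\in D_2$, $t_3\in D_3$ be the double roots. They are fixed by Lemma~\ref{lemStationaryIfDouble}, so a single M\"obius map sends $t_2\mapsto 0$ and $t_3\mapsto\infty$ for all $n$ at once. Then, up to scalars, $p_2=x^2$, $p_3=1$ and $p_1=ax^2+bx+c$ with roots $u,v\in D_1$. A direct computation gives
\[
[p_2,p_3]=2x,\qquad [p_3,p_1]=-(2ax+b),\qquad [p_1,p_2]=-x(bx+2c).
\]
Hence the new roots in $D_1$ are $(u+v)/2$ and $2uv/(u+v)$, that is, the arithmetic and harmonic means of the old pair.

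Put $g=\sqrt{uv}$ and $w=(u-g)/(u+g)$; the other root then corresponds to $-w$. The step becomes $w\mapsto w^2$, and the product $uv$ is preserved. So $w_n=w_0^{2^n}$, and choosing the branch of $g$ with $|w_0|\le 1$ makes both roots converge to $g$, provided $|w_0|\ne 1$.

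To exclude $|w_0|=1$, note that in that case $u$ and $v$ lie on the line $g\,i\RR$. The part of $D_1$ on that line corresponds, in the $w$-variable, to an open arc of the unit circle that avoids $w=1$ (which corresponds to $x=\infty$) and $w=-1$ (which corresponds to $x=0$). Such an arc lies in an open half circle, so it cannot contain both $w_0$ and $-w_0$, which is a contradiction.

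\textbf{Part (ii).} Send the fixed double root in $D_3$ to $\infty$, so $p_3=1$ and $p_1,p_2$ are monic with roots $u_j,v_j\in D_j$. Then $[p_2,p_3]=p_2'$ and $[p_3,p_1]=-p_1'$ contribute the midpoints $m_j=(u_j+v_j)/2$. The remaining factor $[p_1,p_2]$ contributes one root $a_1\in D_1$ and one root $a_2\in D_2$. So the new pair in $D_j$ is $\{m_j,a_j\}$.

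First I would look for a quantity that shows some $\delta_j(f^{(n)})$ becomes smaller than the $\varepsilon_j$ of Lemma~\ref{lemConvergenceIfSmall}~\ref{ConvergenceIfSmallii}, after shrinking to closed discs $F_j$, which is possible since by Lemma~\ref{lemConvergenceIfSmall}~\ref{ConvergenceIfSmalli} the roots stay in $F_j$. The candidate I would try first is the relation~\eqref{eqRootDiff} specialised to this normalisation, which should give $\delta_1'\delta_2'\le c\,\delta_1\delta_2\cdot(\text{bounded})$ with $c<1$ uniformly on the compact configuration space.

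If no such direct estimate works, the fallback is a compactness argument. The sequence lies in a compact set of configurations, and $H_{\mathfrak D}$ is continuous on it. So the $\omega$-limit set is compact and invariant. It would then suffice to show that every invariant compact set contains a configuration with $\delta_1=0$ or $\delta_2=0$, and then to invoke Lemma~\ref{lemConvergenceIfSmall}~\ref{ConvergenceIfSmallii} near that configuration. I expect this step, showing that neither pair can stay separated forever, to be the main obstacle.

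\textbf{Part (iii).} When $f$ is admissible, Proposition~\ref{propPeriodsRelationWithSubordination} says that the period matrix $\mathfrak W(f^{(n)};\mathfrak D)$ is constant up to sign. I would take an accumulation point $g$ of $f^{(n)}$ and argue as follows.
\begin{itemize}
\item If no pair collapses along a subsequence, then $g$ and its iterates are admissible, and the dynamics near the $\omega$-limit set contradicts the argument of~\ref{PartialConvergenceii}. Concretely, apply~\ref{PartialConvergenceii} to a nearby configuration with one pair forced to collide, and use continuity together with the robustness in Lemma~\ref{lemConvergenceIfSmall}~\ref{ConvergenceIfSmallii}.
\item This shows that at least one $\delta_j(f^{(n)})$ drops below the corresponding $\varepsilon_j$, so that pair collapses.
\item Once one pair has collapsed in the limit, a second application of the same reasoning, which is the analogue of~\ref{PartialConvergenceii} for the limiting double root, yields a second collapsing pair.
\end{itemize}
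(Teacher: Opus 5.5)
Your part (i) is correct and is the paper's argument in other coordinates: your $w\mapsto w^2$ is the paper's $t\mapsto 2t$ under $\tanh$, and your exclusion of $|w_0|=1$ replaces the paper's placement of $D_1$ in a half-plane. Parts (ii) and (iii), however, are not proved.

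\textbf{Part (ii).} You leave open exactly the decisive step, namely that the pairs in $D_1$ and $D_2$ cannot both stay separated. The compactness fallback only restates that step. The missing idea is the AGM structure used in the paper. Put $D_1$ and $D_2$ on opposite sides of the imaginary axis, let $u_j,v_j$ be the roots in $D_j$, and set $A_n^2=(u_1-v_2)(v_1-u_2)$ and $B_n^2=(u_1-u_2)(v_1-v_2)$, taking principal roots. Then $A_n^2-B_n^2=-(u_1-v_1)(u_2-v_2)$, so $|A_n^2-B_n^2|=\delta_1^{(n)}\delta_2^{(n)}$. Order the new pair in $D_j$ as $(m_j,a_j)$; a direct computation then gives $A_{n+1}=(A_n+B_n)/2$ and $B_{n+1}^2=A_nB_n$. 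Convergence of the AGM gives $A_n-B_n\to0$, hence $\delta_1^{(n)}\delta_2^{(n)}\to0$, and Lemma~\ref{lemConvergenceIfSmall}~\ref{ConvergenceIfSmallii} finishes the argument.

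Your guessed contraction follows from these relations, not from \eqref{eqRootDiff} plus compactness. The relations give $\delta_1^{(n+1)}\delta_2^{(n+1)}=|A_n-B_n|^2/4$, while $\delta_1^{(n)}\delta_2^{(n)}=|A_n-B_n|\,|A_n+B_n|$. So $c=1/4$ amounts to the inequality $|A_n-B_n|<|A_n+B_n|$. By contrast, \eqref{eqRootDiff} in your normalisation only gives $\delta_1'=\delta_1^2/(4|m_1-a_2|)$. Bounding $\delta_1\delta_2/(16|m_1-a_2||m_2-a_1|)$ by a constant below $1$ cannot be done by crude estimates when the discs are large compared with their mutual distance.

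\textbf{Part (iii).} Your reduction to (ii) does not work. Suppose no $\delta_j^{(n)}$ tends to $0$ along a subsequence. Then every accumulation point of the orbit has all three pairs separated by a fixed amount, so it is not near any configuration with a double root. Applying (ii) to such a configuration, together with continuity of finitely many iterates, only controls orbits that come close to it, and nothing forces yours to.

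The second bullet has the same defect. Part (ii) needs an exact double root, fixed by Lemma~\ref{lemStationaryIfDouble} and normalised to $p_3=1$, not an asymptotic one. You give no perturbation estimate to bridge this.

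A global input is needed, and it is the fact you mention without using. By Proposition~\ref{propPeriodsRelationWithSubordination} and Proposition~\ref{propRichProperties}~\ref{Richii}, $a_1,a_2,2^nb_1,2^nb_2$ is a symplectic basis of $\Per_{f^{(n)}}$, so the Riemann matrix of $f^{(n)}$ is $2^n\Omega$. The paper then applies Thomae's formula. It writes $(\delta_j^{(n)}\delta_k^{(n)})^2$, divided by the bounded product of the four cross differences between the $j$th and $k$th pairs, as the fourth power of a ratio of theta constants at $2^n\Omega$. The numerator constants have nonzero upper characteristic and tend to $0$; the denominator constants tend to $1$. Hence $\delta_j^{(n)}\delta_k^{(n)}\to0$ for all $j\ne k$. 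So for some $n$ two of the $\delta_j^{(n)}$ lie below the thresholds of Lemma~\ref{lemConvergenceIfSmall}~\ref{ConvergenceIfSmallii}.
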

	\begin{proof}
		Due to projective invariance we can apply a suitable M\"obius transformation to simplify the proof. That is, to prove~\ref{PartialConvergencei} we can assume without loss of generality that $e^{(0,2)}_{1} = e^{(0,2)}_{2} = 0$ and $e^{(0,3)}_{1} = e^{(0,3)}_{2} = \infty$. Note that Lemma~\ref{lemStationaryIfDouble} implies that $e^{(n,2)}_{1} = e^{(n,2)}_{2} = 0$ and $e^{(n,3)}_{1} = e^{(n,3)}_{2} = \infty$ for all $n$. Moreover, since $D_1$ in this case is an open disc that does not contain neither $0$ nor $\infty$, we can further assume that \[D_1 \subset \{z \in \CC: \mathrm{Re}\;z > 0\}.\] With these assumptions it is easy to verify that up to a transposition the pair $\left(e_1^{(n+1,1)}, e_2^{(n+1,1)}\right)$ coincides with the pair 
		\[
		\left( \frac{e_1^{(n,1)} + e_2^{(n,1)}}{2}, \frac{2e_1^{(n,1)}e_2^{(n,1)}}{e_1^{(n,1)} + e_2^{(n,1)}}\right).
		\]
		That is,~\ref{PartialConvergencei} is reduced to the fact that the iterations of arithmetic and harmonic mean of two numbers from the open right half-plane converge to a common limit. The proof we present here is based upon a trigonometric variable change (for a more detailed investigation for the case of real numbers see~\cite{NakamuraAHM}). Let $s_j = \sqrt{e_j^{(0,1)}}$ for $j = 1,2$, where the square root is chosen to be in the right half-plane. Then the number $s_1/s_2$ also belongs to the open right half-plane, so there exists $t \in \CC$ such that $\mathrm{Re}\; t > 0$ and
		\[
		\frac{s_1}{s_2} = \frac{1-e^{-2t}}{1+e^{-2t}} = \tanh(t).
		\]
		Then it is clear that 
		\[
		e_1^{(0,1)} = s_1s_2\tanh(t),\;\;e_2^{(0,1)} = s_1s_2\coth(t).
		\]
		Now from the formulas
		\[
		\coth(2t) = \frac{\tanh(t) + \coth(t)}{2},\;\;\tanh(2t) = \frac{2}{\tanh(t) + \coth(t)}
		\]
		it is clear that the pairs $\left(e_1^{(n+1,1)}, e_2^{(n+1,1)}\right)$ and $(s_1s_2\tanh(2^nt), s_1s_2\coth(2^nt))$ coincide up to a transposition. Since $\mathrm{Re}\; t > 0$ it is clear that $\tanh(2^nt),\coth(2^nt) \to 1$ when $n \to +\infty$. Thus,~\ref{PartialConvergencei} is proved.
		
		To prove~\ref{PartialConvergenceii} we can without loss of generality assume that $e^{(0,3)}_{1} = e^{(0,3)}_{2} = \infty$ and that $D_1 \subset \{z \in \CC: \mathrm{Re}\; z > 0\}$, $ D_2 \subset \{z \in \CC: \mathrm{Re}\; z < 0\}$. In this case the pairs $\left(e_1^{(n+1,1)}, e_2^{(n+1,1)}\right)$, $\left(e_1^{(n+1,2)}, e_2^{(n+1,2)}\right)$ up to the transpositions can be calculated by the rules
		\begin{equation}\label{eqPairsRecursion}
			\begin{gathered}
				e_1^{(n+1,1)} = \frac{e_1^{(n,1)} + e_2^{(n,1)}}{2},\;\;e_1^{(n+1,2)} = \frac{e_1^{(n,2)} + e_2^{(n,2)}}{2}, \\
				\left[p_1^{(n)}, p_2^{(n)}\right]\left(e_2^{(n+1,1)}\right) = \left[p_1^{(n)}, p_2^{(n)}\right]\left(e_2^{(n+1,2)}\right) = 0,
			\end{gathered}
		\end{equation}
		where $p_j^{(n)}(x) = \left(x - e_1^{(n,j)}\right)\left(x -e_2^{(n,j)}\right)$, $j = 1,2$. To simplify the notation we assume that pairs $\left(e_1^{(n+1,1)}, e_2^{(n+1,1)}\right)$ and $\left(e_1^{(n+1,2)}, e_2^{(n+1,2)}\right)$ are ordered in a way such that~\eqref{eqPairsRecursion} holds. Now we introduce
		\[
		a_n = \sqrt{\left(e_1^{(n,1)} - e_2^{(n,2)}\right)\left(e_2^{(n,1)} - e_1^{(n,2)}\right)},\;\;b_n = \sqrt{\left(e_1^{(n,1)} - e_1^{(n,2)}\right)\left(e_2^{(n,1)} - e_2^{(n,2)}\right)},
		\]
		where square roots are chosen to be in the right half-plane. We claim that $\{(a_n,b_n)\}$ is an AGM-sequence (see, e.g.~\cite{Cremona}), i.e.
		\[
		a_{n+1} = \frac{a_n + b_n}{2},\;\;b_{n+1}^2 = a_nb_n.
		\]
		Indeed, this can be verified by a direct calculation. Using Proposition~\ref{propBasicBasic}~\ref{BasicBasiciv} and that the leading coefficient of $\left[p_1^{(n)}, p_2^{(n)}\right]$ equals $\left(e_1^{(n,1)} + e_2^{(n,1)} - e_1^{(n,2)} - e_2^{(n,2)}\right)$, we get
		\begin{multline*}
			b_{n+1}^4 = \left(e_1^{(n+1,1)} - e_1^{(n+1,2)}\right)^2\left(e_2^{(n+1,1)} - e_2^{(n+1,2)}\right)^2 = \\
			\frac{1}{4}\left(e_1^{(n,1)} + e_2^{(n,1)} - e_1^{(n,2)} - e_2^{(n,2)}\right)^2\left(e_2^{(n+1,1)} - e_2^{(n+1,2)}\right)^2 =\\ \frac{1}{4}\Discr\left(\left[p_1^{(n)}, p_2^{(n)}\right]\right) = 
			\Res\left(p_1^{(n)}, p_2^{(n)}\right) = a_n^2 b_n^2.
		\end{multline*}
		Clearly, this means that $b_{n+1}^2 = a_n b_n$ due to the choice of square roots. Now by using the already established formula for $b_{n+1}^2$ and explicit formulas to compute $e_2^{(n,1)}e_2^{(n,2)}$ and $e_2^{(n,1)}+e_2^{(n,2)}$ through the coefficients of $\left[p_1^{(n)}, p_2^{(n)}\right]$ it is possible to calculate that
		 \begin{multline*}
		     a_{n+1}^2 = \left(e_1^{(n+1,1)} - e_2^{(n+1,2)}\right)\left(e_2^{(n+1,1)} - e_1^{(n+1,2)}\right) = \\ e_1^{(n+1,1)}e_2^{(n+1,1)} + e_2^{(n+1,2)}e_1^{(n+1,2)} - e_1^{(n+1,1)}e_1^{(n+1,2)} - e_2^{(n+1,2)}e_2^{(n+1,1)} = \\  \frac{1}{2}\left(e_1^{(n+1,1)} - e_1^{(n+1,2)}\right)\left(e_2^{(n+1,1)} - e_2^{(n+1,2)}\right) + \\ \frac{1}{2} \left(e_1^{(n+1,1)} + e_1^{(n+1,2)}\right)\left(e_2^{(n+1,1)} + e_2^{(n+1,2)}\right) - \\ \frac{1}{4}\left(e_1^{(n,1)} + e_2^{(n,1)}\right)\left(e_1^{(n,2)} + e_2^{(n,2)}\right) -
		     \\ \frac{e_1^{(n,1)}e_2^{(n,1)}\left(e_1^{(n,2)} + e_2^{(n,2)}\right) - \left(e_1^{(n,1)} + e_2^{(n,1)}\right)e_1^{(n,2)}e_2^{(n,2)}}{e_1^{(n,1)} + e_2^{(n,1)} - e_1^{(n,2)} - e_2^{(n,2)}}  = \\
		      -\frac{1}{4}\left(e_1^{(n,1)} + e_2^{(n,1)}\right)\left(e_1^{(n,2)} + e_2^{(n,2)}\right) + \frac{1}{2}b_{n+1}^2 - \\  \frac{e_1^{(n,1)}e_2^{(n,1)}\left(e_1^{(n,2)} + e_2^{(n,2)}\right) - \left(e_1^{(n,1)} + e_2^{(n,1)}\right)e_1^{(n,2)}e_2^{(n,2)}}{e_1^{(n,1)} + e_2^{(n,1)} - e_1^{(n,2)} - e_2^{(n,2)}} + \\
		      \frac{\left(e_1^{(n,1)}e_2^{(n,1)} - e_1^{(n,2)}e_2^{(n,2)}\right)\left(e_1^{(n,2)} + e_2^{(n,2)} + e_1^{(n,1)} + e_2^{(n,1)}\right)}{2\left(e_1^{(n,1)} + e_2^{(n,1)} - e_1^{(n,2)} - e_2^{(n,2)}\right)} = \\ -\frac{1}{4}\left(e_1^{(n,1)} + e_2^{(n,1)}\right)\left(e_1^{(n,2)} + e_2^{(n,2)}\right) + \frac{1}{2}b_{n+1}^2 + \frac{e_1^{(n,1)}e_2^{(n,1)} + e_1^{(n,2)}e_2^{(n,2)}}{2} = \\ \frac{a_n^2 + b_n^2 + 2b_{n+1}^2}{4} = \left(\frac{a_n + b_n}{2}\right)^2.
		 \end{multline*}
		Again by choice of square roots this implies that $a_{n+1} = (a_n + b_n)/2$. By~\cite[Proposition~1]{Cremona} the sequences $\{a_n\}$ and $\{b_n\}$ converge to a common limit. It is easy to see that this limit is not equal to zero (since we can shrink the disks $D_1$ and $D_2$ slightly, so the initial roots are still inside those disks, but now the distance from the roots in distinct disks is separated from $0$). Thus, the cross-ratio
		\[
		\left. \frac{\left(e_1^{(n,1)} - e_2^{(n,2)}\right)}{\left(e_1^{(n,1)} - e_1^{(n,2)}\right)} \middle/ \frac{\left(e_2^{(n,1)} - e_2^{(n,2)}\right)}{\left(e_2^{(n,1)} - e_1^{(n,2)}\right)} \right.
		\]
		converges to $1$ when $n \to +\infty$. This implies that both values $\left|e_1^{(n,1)} - e_2^{(n,1)}\right|$ and $\left|e_1^{(n,2)} - e_2^{(n,2)}\right|$ cannot be simultaneously separated from $0$ when $n \to +\infty$. Thus, Lemma~\ref{lemConvergenceIfSmall}~\ref{ConvergenceIfSmallii} implies the statement~\ref{PartialConvergenceii}.
		
		Finally, we prove~\ref{PartialConvergenceiii}. Here we can without loss of generality assume that $D_1,D_2,D_3 \subset \CC$. Since $f$ is admissible it follows that $f^{(n)}$ is admissible for all $n = 0,1,\dots$. We choose a symplectic basis $a_1,a_2,b_1,b_2 \in \Per_f$ such that $a_1$ and $a_2$ are the first and the second column of $\mathfrak W(f;\mathfrak D)$ respectively. From Proposition~\ref{propPeriodsRelationWithSubordination} and Proposition~\ref{propRichProperties}~\ref{Richii} it is clear that $a_1,a_2,2^nb_1,2^nb_2$ is a symplectic basis in $\Per_{f^{(n)}}$ for all $n$. Let $A$ (resp. $B$) be the matrices whose columns are $a_1$ and $a_2$ (resp. $b_1$ and $b_2$) and define $\Omega = A^{-1}B$. From Thomae formula~\cite[Theorem~III.8.1]{mumfordII} it follows that for all $(j,k) = (1,2),(1,3),(2,3)$ the equalities
		\[
		\begin{gathered}
			\frac{\left(e_1^{(n,j)} - e_2^{(n,j)}\right)^2\left(e_1^{(n,k)} - e_2^{(n,k)}\right)^2}{\left(e_1^{(n,j)} - e_1^{(n,k)}\right)\left(e_1^{(n,j)} - e_2^{(n,k)}\right)\left(e_2^{(n,j)} - e_1^{(n,k)}\right)\left(e_2^{(n,j)} - e_2^{(n,k)}\right)} = \\ \left(\frac{\theta \begin{bmatrix}
					\beta_{jk} \\ 0
				\end{bmatrix}(0,2^n\Omega)\;\;\theta \begin{bmatrix}
					\beta_{jk} \\ \alpha_j + \alpha_k
				\end{bmatrix}(0,2^n\Omega)}{\theta \begin{bmatrix}
					0 \\ \alpha_j
				\end{bmatrix}(0,2^n\Omega)\;\;\theta \begin{bmatrix}
					0 \\ \alpha_k
				\end{bmatrix}(0,2^n\Omega)}\right)^4, \end{gathered}\]
		hold, where we used the following notation:
		\[
		2\alpha_1 = \begin{pmatrix} 1 \\ 0 \end{pmatrix},\;\;2\alpha_2 = \begin{pmatrix} 0 \\ 1 \end{pmatrix},\;\;2\alpha_3 = \begin{pmatrix} 1 \\ 1 \end{pmatrix},\;\;2\beta_{12} = \begin{pmatrix} 1 \\ 1 \end{pmatrix},\;\;2\beta_{13} = \begin{pmatrix} 1 \\ 0 \end{pmatrix},\;\;2\beta_{23} = \begin{pmatrix} 0 \\ 1 \end{pmatrix}.
		\]
		From the definition of theta functions with characteristics it is easy to see that for all $\alpha \in \RR^2$ and  $\beta \in \{0, 1/2\}^2$ when $n \to +\infty$ we have
		\[
		\theta\begin{bmatrix} \beta \\ \alpha \end{bmatrix}(0,2^n\Omega) \to 0,\text{ if }\beta \ne 0 \text{ and }\theta\begin{bmatrix} \beta \\ \alpha \end{bmatrix}(0,2^n\Omega) \to 1,\text{ if }\beta = 0. 
		\]
		Thus, we can conclude that the values 
		\[
		\begin{gathered}
			\left(e_1^{(n,1)} - e_2^{(n,1)}\right)^2\left(e_1^{(n,2)} - e_2^{(n,2)}\right)^2,\;\;\left(e_1^{(n,1)} - e_2^{(n,1)}\right)^2\left(e_1^{(n,3)} - e_2^{(n,3)}\right)^2,\\ \left(e_1^{(n,2)} - e_2^{(n,2)}\right)^2\left(e_1^{(n,3)} - e_2^{(n,3)}\right)^2
		\end{gathered}
		\]
		converge to $0$ when $n \to +\infty$. The statement~\ref{PartialConvergenceiii} immediately follows in view of Lemma~\ref{lemConvergenceIfSmall}~\ref{ConvergenceIfSmallii}.
	\end{proof}
	\begin{proof}[Proof of Theorem~\ref{thConvergenceOfPolynomials}.]
		We again assume that $D_1,D_2,D_3 \subset \CC$. Fix $f \in \mathfrak P_6$ that is subordinate to $\mathfrak D = (D_1,D_2,D_3)$ and find closed disks $F_j \subset D_j$ such that $F_j$ contains both roots of $f$ that belong to $D_j$ for $j = 1,2,3$.
		
		Now consider the topological space $Y = F_1^{(2)} \times F_2^{(2)} \times F_3^{(2)}$, where the superscript $(2)$ means the symmetric square. That is, elements of $Y$ are the triples $(y_1,y_2,y_3)$, where each $y_j$ is the unordered pair of elements of $F_j$, $j = 1,2,3$. For each $y \in X$ define $g_y \in \mathfrak P_6$ to be the monic polynomial, whose roots are precisely all the components of $y_1,y_2,$ and $y_3$ counting multiplicity. By Lemma~\ref{lemConvergenceIfSmall}~\ref{ConvergenceIfSmalli} we can define a continuous map $H:Y \to Y$ such that for all $(y_1,y_2,y_3)$ we have $H(x) = (z_1,z_2,z_3)$, where $z_j$ is the unordered pair that consists of roots of $H_{\mathfrak D}(g_y)$ that belong to $D_j$. If by $y^{(n)}$ we denote the roots of $f^{(n)}$ naturally organized in three unordered triples, then we have $y^{(n+1)} = H(y^{(n)})$. 
		
		With the foregoing preparation we are finally ready to finish the proof. Let $A$ denote the closure of the set $\{y^{(n)}: n = 0,1,\dots\}$ in the space $Y$. We shall call an unordered pair {\it{diagonal}} if its elements are equal. From Lemma~\ref{lemConvergenceIfSmall}~\ref{ConvergenceIfSmalliii} it is evident that the statement of Theorem~\ref{thConvergenceOfPolynomials} is true, if $A$ contains a triple $y = (y_1,y_2,y_3)$ in which all pairs $y_j$ are diagonal. Indeed, in this case for all $\varepsilon > 0$ we can find $n \in \NN$ such that in all pairs $y^{(n)}_j$ the distance between components is at most $\varepsilon$. Now we claim that $A$ contains a triple $y = (y_1,y_2,y_3)$ in which at least two pairs are diagonal. Indeed, if all pairs in $y^{(0)}$ are not diagonal, then by Lemma~\ref{lemPartialConvergence}~\ref{PartialConvergenceiii} at least two of three sequences $\{y^{(n)}_1\}_{n \in \NN}$, $\{y^{(n)}_2\}_{n \in \NN}$, and $\{y^{(n)}_3\}_{n \in \NN}$ converge to a diagonal pair. So it remains to extract a subsequence such that the third pair converges. A similar argument involving Lemma~\ref{lemPartialConvergence}~\ref{PartialConvergenceii} works if one of the pairs of $y^{(0)}$ is diagonal. Finally, if two or more pairs in $y^{(0)}$ are diagonal, then the statement is trivial. Thus, $A$ indeed contains a triple $y$ such that at least two of its pairs are diagonal. From the definition of $A$ it is clear that $A$ is invariant under the mapping $H$, i.e. $H(A) \subset A$. Therefore, $A$ contains the limit of the sequence $\{H^n(y)\}_{n \in \NN}$, which by Lemma~\ref{lemPartialConvergence}~\ref{PartialConvergencei} exists and is a triple in which all pairs are diagonal. Thus, $A$ contains a triple in which all pairs are diagonal and Theorem~\ref{thConvergenceOfPolynomials} follows.
	\end{proof}
	
	\section{Limiting values of Kleinian functions}\label{sec:Limit}
	Here we prove that if $f^{(n)}$ is a sequence from Theorem~\ref{thConvergenceOfPolynomials}, then the corresponding canonical Kleinian functions of weight $2$ converge uniformly on compact subsets in $\CC^2$ when $n \to +\infty$. Moreover, we provide sufficient information about the limiting functions to derive explicit formulas for them. We begin with the formulation of the main result.
	
	\begin{theorem}\label{thPassingToTheLimit}
		Let $\mathfrak D = (D_1,D_2,D_3)$ be a triple disjoint open disks and assume that $f \in \mathfrak P_6$ is admissible and subordinate to $\mathfrak D$. Assume that the sequence $f^{(n)}$ is defined by~\eqref{eqDefRichSeq} and denote its limit by $g$. Let $M \in \CC^{2\times 2}$ and $L \in \CC^{3 \times 2}$ be the matrices satisfying
		\[
		\mathfrak E(g;\mathfrak D) = M \mathfrak W(g;\mathfrak D),\;\;L\mathfrak W(g;\mathfrak D) = \begin{pmatrix}
			1 & 0 & -1 \\
			0 & 1 & -1 \\
			1 & -1 & 0 
		\end{pmatrix}.
		\]
		Then as $n \to +\infty$ each of the functions $S^{f^{(n)}}$, $S_{jk}^{f^{(n)}}$ uniformly on compact sets in $\CC^2$ converges to a function of the form
		\[
		\exp(z^TMz)\left(\alpha + \beta \sin^2(\pi l_1z) + \gamma\sin^2(\pi l_2z) + \delta\sin^2(\pi l_3z)\right),
		\]
		where $l_j$ is the $j$th row of the matrix $L$ and $\alpha,\beta,\gamma,\delta$ are complex numbers.
	\end{theorem}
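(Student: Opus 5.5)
The plan is to pass to theta functions through the isomorphism $T$ of \eqref{eqTisomorphism}, with a symplectic basis adapted to the discs. First I show that the $a$-periods stay fixed while the Riemann matrices degenerate as $2^n\Omega$. Then I compute the limits of suitably rescaled second-order theta functions. Finally I identify $S^{f^{(n)}}$ and $S_{jk}^{f^{(n)}}$ in the limit through the Taylor normalizations \eqref{eqTaylorExpansions}.

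\emph{Fixed periods and convergent Gaussian factor.} As in the proof of Lemma~\ref{lemPartialConvergence}~\ref{PartialConvergenceiii}, let $a_1,a_2$ be the first two columns of $\mathfrak W(f;\mathfrak D)$ and complete them to a symplectic basis $a_1,a_2,b_1,b_2$ of $\Per_f$. By Proposition~\ref{propPeriodsRelationWithSubordination} the columns of $\mathfrak W(f^{(n)};\mathfrak D)$ agree with those of $\mathfrak W(f;\mathfrak D)$ up to sign, and we may fix the signs consistently. By Proposition~\ref{propRichProperties}~\ref{Richii}, the vectors $a_1,a_2,2^nb_1,2^nb_2$ form a symplectic basis of $\Per_{f^{(n)}}$. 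Hence $A$ does not depend on $n$, and $\Omega_n=2^n\Omega$ with $\Omega=A^{-1}B$.

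Since the contours $\gamma_j$ stay away from the roots (all roots remain in fixed closed discs $F_j\subset D_j$) and $f^{(n)}\to g$ by Theorem~\ref{thConvergenceOfPolynomials}, the integrals converge. This gives $\mathfrak W(f^{(n)};\mathfrak D)\to\mathfrak W(g;\mathfrak D)$, so $\mathfrak W(g;\mathfrak D)=\pm\mathfrak W(f;\mathfrak D)$, and also $\mathfrak E(f^{(n)};\mathfrak D)\to\mathfrak E(g;\mathfrak D)$. Because the columns of both matrices sum to zero, $M$ is well defined, and $\eta_A^{(n)}A^{-1}\to M$. Consequently the prefactor $\exp(z^T\eta^{(n)}_AA^{-1}z)$ in $T$ converges to $\exp(z^TMz)$ locally uniformly.

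\emph{Asymptotics of the theta basis.} The space $R_2^{\Omega_n}$ is spanned by the four functions $\Theta^{(n)}_\beta(u)=\theta\begin{bmatrix}\beta\\0\end{bmatrix}(2u,2\Omega_n)$, $\beta\in\{0,1/2\}^2$. Expanding the series, $\Theta^{(n)}_0\to1$ locally uniformly, because $\mathrm{Im}\,2^n\Omega\to\infty$ in the positive definite sense. For $\beta\neq0$, I divide $\Theta^{(n)}_\beta$ by the dominant exponential $\exp(i\pi 2^{n+1}\mu^T\Omega\mu)$, where $\mu$ runs over the minimal-norm vectors in $\beta+\ZZ^2$. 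The normalized function then converges to a cosine: $\cos(4\pi u_1)$, $\cos(4\pi u_2)$, and $\cos(4\pi(u_1\pm u_2))$ for $\beta=(1/2,1/2)$. The remaining terms are bounded by a geometric tail on compact sets.

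In the variable $u=A^{-1}z$ these linear forms should match the rows $l_j$ of $L$. Indeed $L\mathfrak W(g;\mathfrak D)$ prescribes $l_1a_1=1$, $l_2a_2=1$, $l_1a_2=l_2a_1=0$, and $l_3=l_1-l_2$. The identity $\cos 2x=1-2\sin^2x$ then produces the functions $\sin^2(\pi l_jz)$, possibly after a rescaling of the forms coming from the factor $2$ in $2u$ that must be checked against the convention for $L$.

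\emph{Main obstacle and identification of the limit.} The hardest step is the case $\beta=(1/2,1/2)$. Here the two competing pairs $\pm(1/2,1/2)$ and $\pm(1/2,-1/2)$ differ by the factor $\exp(\pm i\pi 2^{n}\Omega_{12})$. The limit is a single cosine of $u_1-u_2$, as required by the third row of $L$, only if $\mathrm{Im}\,\Omega_{12}$ has the right sign; if $\mathrm{Im}\,\Omega_{12}=0$ one gets a sum of two cosines instead. I would settle this by exploiting the freedom in the choice of the $b$-cycles, using the fact that $a_3=-a_1-a_2$ is also a period in the isotropic subgroup. If that fails, I would use the Thomae-formula limits from the proof of Lemma~\ref{lemPartialConvergence}~\ref{PartialConvergenceiii}, where the ratios for the pairs $(j,k)$ degenerate in a controlled way, to read off the sign of $\mathrm{Im}\,\Omega_{12}$ for the degenerating curves.

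Once this is settled, the rescaled basis $T_n(\Theta^{(n)}_\beta)$ converges locally uniformly to $\exp(z^TMz)$ times $1$, $\cos(2\pi l_1z)$, $\cos(2\pi l_2z)$, $\cos(2\pi l_3z)$. Their $2$-jets at $0$, namely $1,(l_1z)^2,(l_2z)^2,((l_1-l_2)z)^2$ modulo the Gaussian factor, are linearly independent. Therefore the $4\times4$ linear system expressing $S^{f^{(n)}}$ and $S^{f^{(n)}}_{jk}$ through the rescaled basis via \eqref{eqTaylorExpansions} has coefficients converging to an invertible limit. Its solutions converge, which yields the claimed form with constants $\alpha,\beta,\gamma,\delta$.
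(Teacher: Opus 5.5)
\textbf{Verdict: there is a genuine gap at the step that determines the shape of the limit.}

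Your outline matches the paper's. You fix the $a$-periods so that $\Omega_n=2^n\Omega$, show $\eta_A^{(n)}A^{-1}\to M$, take asymptotics in $R_2^{2^n\Omega}$, and identify the limit through $2$-jets. Your normalized theta basis is a repackaging of the Fourier coefficients in Lemma~\ref{lemThetaConvergence}, and the limits are $\cos(2\pi u_1)$ etc., not $\cos(4\pi u_1)$.

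\textbf{Where it breaks.} Your claimed limits of the normalized $\Theta^{(n)}_\beta$ hold exactly when $\mathrm{Im}\,\Omega$ is quasi-reduced (Definition~\ref{defQuasiReduced}). Equivalently, $0<\mathrm{Im}\,\Omega_{12}<\min(\mathrm{Im}\,\Omega_{11},\mathrm{Im}\,\Omega_{22})$. This is needed for every $\beta\neq 0$, not only $\beta=(1/2,1/2)$. For example, take $\mathrm{Im}\,\Omega=\begin{pmatrix}10&2\\2&1\end{pmatrix}$. The dominant terms of $\Theta^{(n)}_{(1/2,0)}$ are then $\mu=\pm(1/2,-1)$, the limit is $\cos 2\pi(u_1-2u_2)$, and this is not of the asserted form.

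\textbf{Why your remedies cannot supply this.}
\begin{itemize}
\item Changing $b_1,b_2$ with $a_1,a_2$ fixed replaces $B$ by $B+AS$ with $S$ integral symmetric. So $\Omega$ becomes $\Omega+S$, and $\mathrm{Im}\,\Omega$ does not change at all.
\item Replacing $a_1$ or $a_2$ by $a_3=-a_1-a_2$ only permutes the three numbers $\mathrm{Im}\,\Omega_{12}$, $\mathrm{Im}\,\Omega_{11}-\mathrm{Im}\,\Omega_{12}$, $\mathrm{Im}\,\Omega_{22}-\mathrm{Im}\,\Omega_{12}$. What is needed is that all three are positive.
\item The Thomae limits only give exponential rates, i.e.\ the minima of $m^T(\mathrm{Im}\,\Omega)m$ over the three nonzero classes of $\ZZ^2/2\ZZ^2$. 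These minima are invariant under $\mathrm{Im}\,\Omega\mapsto\gamma^T(\mathrm{Im}\,\Omega)\gamma$ for $\gamma\equiv I\pmod 2$. Taking $\gamma=\begin{pmatrix}1&-2\\0&1\end{pmatrix}$ sends $\mathrm{Im}\,\Omega_{12}$ to $\mathrm{Im}\,\Omega_{12}-2\,\mathrm{Im}\,\Omega_{11}$, which is negative for a quasi-reduced matrix. So the rates cannot tell which vectors realize the minima, not even the sign of $\mathrm{Im}\,\Omega_{12}$.
\end{itemize}

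\textbf{The missing idea.} This is the paper's Lemma~\ref{lemQuasiReducedIfSubordinate}, which computes $\mathrm{Im}\,\Omega$ analytically rather than through theta constants. By Riemann's bilinear relations, for real $v=(v_1,v_2)^T$,
\[
2^n v^T(\mathrm{Im}\,\Omega)v=i\int_{\CC}\frac{|w(x)|^2\,dx\wedge d\bar x}{|f^{(n)}(x)|}.
\]
Here $w\in\mathfrak P_1$ is normalized by $\int_{\gamma_j}w(x)\,dx/\sqrt{f^{(n)}(x)}=v_j$, $j=1,2$. By Proposition~\ref{propPeriodsRelationWithSubordination} these conditions do not depend on $n$.

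As $n\to\infty$ the integral localizes at the three coalescing root pairs. The values $w(t_j)$ at the limiting double roots are fixed by residues and are proportional to $v_1$, $v_2$, $-(v_1+v_2)$. With Lemma~\ref{lemIntegralEstimates} this gives
\[
2^n v^T(\mathrm{Im}\,\Omega)v=-\frac{1}{\pi}\Bigl(v_1^2\ln\delta_1^{(n)}+v_2^2\ln\delta_2^{(n)}+(v_1+v_2)^2\ln\delta_3^{(n)}\Bigr)+O(1).
\]
The error term is bounded because $\delta_j^{(n)}\to 0$ strictly quadratically (Lemma~\ref{lemConvergenceIfSmall}~\ref{ConvergenceIfSmallii}). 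Since every $-\ln\delta_j^{(n)}\to+\infty$, the display forces $2^n\bigl(m^T(\mathrm{Im}\,\Omega)m-k^T(\mathrm{Im}\,\Omega)k\bigr)\to+\infty$ for each $k\in E$ and each $m\neq\pm k$ with $m\equiv k\pmod 2$. That is quasi-reducedness. Equivalently, $\pi\,v^T(\mathrm{Im}\,\Omega)v=c_1v_1^2+c_2v_2^2+c_3(v_1+v_2)^2$ with $c_j=\lim_n(-2^{-n}\ln\delta_j^{(n)})>0$.

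Without this input or an equivalent one, your theta asymptotics are unproved. So is the identification of the limit with the specific forms $l_1,l_2,l_3$.
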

	
	It is easy to see that Theorem~\ref{thPassingToTheLimit} allows to explicitly compute the limits of the Kleinian functions of weight $2$ by computing matrices $\mathfrak W$ and $\mathfrak E$ for the limiting polynomial using residue calculus and by finding unknown coefficients to match the Taylor expansions at zero (formula~\eqref{eqTaylorExpansions}). We present the answer in the generic case, i.e. when the limiting polynomial does not have a root at infinity (the case when $g$ has a double root at infinity is covered by Theorem~\ref{thPassingToTheLimit} as well; moreover, the formulas in this case appear to be less cumbersome than those in the generic case). That is, assume the hypotheses of Theorem~\ref{thPassingToTheLimit} and that $g(x) = c(x - t_1)^2(x - t_2^2)(x - t_3)^2$ where $t_1,t_2,t_3 \in \CC$ and $c \ne 0$. Then the matrix $M$ can be computed as
	\begin{equation*}
		M_{c,t_1,t_2,t_3} = \frac{c}{2}\begin{pmatrix}
			t_1 t_2 t_3 (t_1 + t_2 + t_3) & -t_1 t_2 t_3 \\
			-t_1 t_2 t_3 & t_1 t_2 + t_1 t_3 + t_2 t_3
		\end{pmatrix}.
	\end{equation*}
	and for all $z \in \CC^2$ we have
	\begin{subequations}\label{eqSAllLimit}
		\begin{multline}\label{eqSLimit}
			S^{f^{(n)}}(z) \xrightarrow[n \to +\infty]{} -\frac{4 \exp(z^T M_{c,t_1,t_2,t_3}z)}{c (t_1 - t_2)^2(t_1 - t_3)^2 (t_2 - t_3)^2} \times \\ 
			\left( (t_1 - t_2)(t_1 - t_3)\sin^2\left(\frac{i\sqrt{c}}{2}(t_2 - t_3)(z_2 - t_1z_1)  \right) + \right. \\  (t_2 - t_1)(t_2 - t_3)\sin^2\left(\frac{i\sqrt{c}}{2}(t_1 - t_3)(z_2 - t_2z_1)  \right) + \\ \left.   (t_3 - t_1)(t_3 - t_2)\sin^2\left(\frac{i\sqrt{c}}{2}(t_1 - t_2)(z_2 - t_3z_1)  \right)   \right),
		\end{multline}
		\begin{multline}\label{eqS22Limit}
			S_{22}^{f^{(n)}}(z) \xrightarrow[n \to +\infty]{} -\frac{4 \exp(z^T M_{c,t_1,t_2,t_3}z)}{c (t_1 - t_2)^2(t_1 - t_3)^2 (t_2 - t_3)^2} \times \\ 
			\left( (t_1 - t_2)(t_1 - t_3)(t_2 + t_3)\sin^2\left(\frac{i\sqrt{c}}{2}(t_2 - t_3)(z_2 - t_1z_1)  \right) + \right. \\  (t_2 - t_1)(t_2 - t_3)(t_1 + t_3)\sin^2\left(\frac{i\sqrt{c}}{2}(t_1 - t_3)(z_2 - t_2z_1)  \right) + \\ \left.   (t_3 - t_1)(t_3 - t_2)(t_1 + t_2)\sin^2\left(\frac{i\sqrt{c}}{2}(t_1 - t_2)(z_2 - t_3z_1)  \right)   \right),
		\end{multline}
		\begin{multline}\label{eqS12Limit}
			S_{12}^{f^{(n)}}(z) \xrightarrow[n \to +\infty]{} \frac{4 \exp(z^T M_{c,t_1,t_2,t_3}z)}{c (t_1 - t_2)^2(t_1 - t_3)^2 (t_2 - t_3)^2} \times \\ 
			\left( (t_1 - t_2)(t_1 - t_3)t_2 t_3\sin^2\left(\frac{i\sqrt{c}}{2}(t_2 - t_3)(z_2 - t_1z_1)  \right) + \right. \\  (t_2 - t_1)(t_2 - t_3)t_1 t_3\sin^2\left(\frac{i\sqrt{c}}{2}(t_1 - t_3)(z_2 - t_2z_1)  \right) + \\ \left.   (t_3 - t_1)(t_3 - t_2)t_1 t_2\sin^2\left(\frac{i\sqrt{c}}{2}(t_1 - t_2)(z_2 - t_3z_1)  \right)   \right),
		\end{multline}
		\begin{multline}\label{eqS11Limit}
			S_{11}^{f^{(n)}}(z) \xrightarrow[n \to +\infty]{} \frac{2 \exp(z^T M_{c,t_1,t_2,t_3}z)}{(t_1 - t_2)^2(t_1 - t_3)^2 (t_2 - t_3)^2} \times \\ 
			\left( \frac{(t_1 - t_2)^2(t_1 - t_3)^2 (t_2 - t_3)^2}{2} + \right. \\  (t_1 - t_2)(t_1 - t_3)[t_1t_2 t_3(t_1 + t_2 + t_3) + t_2^2 t_3^2]\sin^2\left(\frac{i\sqrt{c}}{2}(t_2 - t_3)(z_2 - t_1z_1)  \right) +  \\  (t_2 - t_1)(t_2 - t_3)[t_1t_2 t_3(t_1 + t_2 + t_3) + t_1^2 t_3^2]\sin^2\left(\frac{i\sqrt{c}}{2}(t_1 - t_3)(z_2 - t_2z_1)  \right) + \\ \left.   (t_3 - t_1)(t_3 - t_2)[t_1t_2 t_3(t_1 + t_2 + t_3) + t_1^2 t_2^2]\sin^2\left(\frac{i\sqrt{c}}{2}(t_1 - t_2)(z_2 - t_3z_1)  \right)   \right).
		\end{multline}
	\end{subequations}
	
	The proof of Theorem~\ref{thPassingToTheLimit} relies on convergence properties of sequences $\{f_n\}_{n \in \NN}$, where $f_n \in R_2^{2^n\Omega}$ for some Riemann matrix $\Omega$. To state the main ingredient of the proof we need to distinguish a specific class of Riemann matrices.
	
	\begin{definition}\label{defQuasiReduced}
		We call a Riemann matrix $\Omega \in \CC^{2 \times 2}$ quasi-reduced if the statement
		\[
		k^T (\mathrm{Im}\;\Omega) k < m^T (\mathrm{Im}\;\Omega) m \;\;\forall m \in \ZZ^2 \text{ such that } m \ne \pm k \text{ and } m - k \in 2\ZZ^2
		\]
		holds for all $k \in E$, where
		\[
		E = \left\{ \begin{pmatrix}
			1 \\ 0
		\end{pmatrix},\;\begin{pmatrix}
			0 \\ 1
		\end{pmatrix},\;
		\begin{pmatrix}
			1 \\ -1
		\end{pmatrix}\right\}.
		\]
	\end{definition}
	
	That is, a Riemann matrix is quasi-reduced if each vector in $E$ has strictly minimal length in its coset modulo $2\ZZ^2$ (ignoring, of course, the opposite sign vector) with respect to Euclidean norm induced by $\mathrm{Im}\;\Omega$. The choice for the name comes from an analogy of this concept to the Minkowski reduction theory (see, e.g.~\cite[\S~V.4]{Igusa}).
	
	\begin{lemma}\label{lemThetaConvergence}
		Assume that $\Omega$ is a quasi-reduced Riemann matrix and let $\{c_n\}_{n \in \mathbb N}$ be any sequence of positive real numbers that converges to $+\infty$. Consider any sequence $\{f_n\}_{n \in \NN}$, where $f_n \in R_2^{c_n \Omega}$. Then the following statements hold.
		\begin{enumerate}[label=(\roman*)]
			\item\label{thetaconvergencei} The sequence $\{f_n\}$ converges uniformly on compact subsets in $\CC^2$ if only if the sequence $p_n$ converges, where $p_n$ is the order $2$ Taylor expansion of $f_n$ at zero (i.e. $f_n(z) = p_n(z) + \bar{o}(z^2)$.
			\item\label{thetaconvergenceii} If the sequence $\{f_n\}$ converges uniformly on compact subsets in $\CC^2$, and $f$ denotes the limit of this sequence, then \[f \in \Span\{1, \sin^2(\pi z_1), \sin^2(\pi z_2),\sin^2(\pi(z_1 - z_2))\}.\]
		\end{enumerate}
	\end{lemma}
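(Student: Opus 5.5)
The plan is to work with an explicit basis of $R_2^{c_n\Omega}$ made of second order theta functions, normalize each basis element so that it has a nonzero limit, and then reduce both statements to linear algebra on order $2$ Taylor data. For $a \in \{0,1/2\}^2$ and a Riemann matrix $\Omega'$ put
\[
\Theta_a^{\Omega'}(z) = \sum_{m \in \ZZ^2} \exp\left[2\pi i (m+a)^T\Omega'(m+a) + 4\pi i (m+a)^T z\right].
\]
It is classical that these four functions form a basis of $R_2^{\Omega'}$ (see~\cite{mumfordI}); $R_2^{\Omega'}$ is four-dimensional, matching $\mathfrak S_f$ via~\eqref{eqTisomorphism}. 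Each $\Theta_a$ is even in $z$.

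\textbf{Step 1: the normalized basis converges.} I would set $q(v) = v^T(\mathrm{Im}\,\Omega)v$ and take $\Omega' = c_n\Omega$. The modulus of the $m$-th term is $\exp[-2\pi c_n q(m+a)]$ times $|e^{4\pi i (m+a)^T z}|$.
\begin{itemize}
\item For $a = 0$ the dominant term is $m = 0$, so $\Theta_0^{c_n\Omega}$ should converge to $1$.
\item For $a \ne 0$ write $m + a = k/2$ with $k \equiv 2a \pmod 2$. The three nonzero classes of $\ZZ^2/2\ZZ^2$ are represented exactly by the three vectors of $E$. Definition~\ref{defQuasiReduced} says that $q$ restricted to the class of $k_0 \in E$ attains its minimum exactly at $\pm k_0$.
\end{itemize}
So I define
\[
g_n^{(a)} = \tfrac12 \exp\left[-2\pi i c_n k_0^T\Omega k_0/4\right]\Theta_a^{c_n\Omega}.
\]
The two minimal terms of $g_n^{(a)}$ give exactly $\cos(2\pi k_0^T z)$. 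The remaining terms are bounded on $|\mathrm{Im}\,z| \le R$ by
\[
\sum_{k \ne \pm k_0} \exp\left[-\tfrac{\pi c_n}{2}\bigl(q(k) - q(k_0)\bigr) + 2\pi R|k|\right].
\]

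\textbf{Step 2: the tail estimate.} This is the main technical point. The class contains only finitely many vectors with $q(k) \le 2q(k_0)$, and each of them has a strict gap $q(k) - q(k_0) \ge \delta > 0$. For all other $k$ we have $q(k) - q(k_0) \ge \tfrac12 q(k) \ge \varepsilon |k|^2$, since $\mathrm{Im}\,\Omega$ is positive definite. Once $c_n$ is large, the linear term $2\pi R|k|$ is absorbed by the quadratic one. Hence the tail is $O(e^{-c_n\delta'})$ uniformly on compacts. This gives
\[
g_n^{(0)} \to 1,\qquad g_n^{(a)} \to \cos(2\pi z_1),\ \cos(2\pi z_2),\ \cos(2\pi(z_1 - z_2)),
\]
uniformly on compact sets, with the cosine corresponding to the relevant $k_0 \in E$. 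These four limits span the same space as $1$, $\sin^2(\pi z_1)$, $\sin^2(\pi z_2)$, $\sin^2(\pi(z_1 - z_2))$.

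\textbf{Step 3: linear algebra on Taylor data.} Let $\tau$ be the linear map sending a function to its order $2$ Taylor polynomial at $0$. All functions involved are even, so $\tau$ takes values in the space spanned by $1$, $z_1^2$, $z_1z_2$, $z_2^2$. By Cauchy estimates, uniform convergence on compact sets implies convergence of the Taylor data, so $\tau(g_n^{(a)})$ converges to $\tau$ of the limits. Those limits are $1$, $1 - 2\pi^2 z_1^2$, $1 - 2\pi^2 z_2^2$ and $1 - 2\pi^2(z_1 - z_2)^2$. They are linearly independent, because $z_1^2$, $z_2^2$, $(z_1 - z_2)^2$ span all quadratic forms. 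Hence the $4\times 4$ matrices $T_n$ of $\tau$ in the basis $g_n^{(a)}$ converge to an invertible matrix $T$, and $T_n^{-1} \to T^{-1}$.

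\textbf{Step 4: conclusion.} Write $f_n = \sum_a \lambda_n^{(a)} g_n^{(a)}$, so that $p_n = T_n\lambda_n$ and $\lambda_n = T_n^{-1}p_n$.
\begin{itemize}
\item If $f_n$ converges uniformly on compacts, then $p_n$ converges by Cauchy estimates.
\item Conversely, if $p_n$ converges, then $\lambda_n$ converges. Combined with the uniform convergence of the $g_n^{(a)}$ from Step 2, this gives uniform convergence of $f_n$ on compacts. This proves~\ref{thetaconvergencei}.
\item The limit is then $\sum_a \lim\lambda_n^{(a)}\cdot\lim g_n^{(a)}$, which lies in the stated span. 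This proves~\ref{thetaconvergenceii}.
\end{itemize}

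I expect Step 2 to be the only delicate part. It is exactly where quasi-reducedness is used: it guarantees that the minimum of $q$ on each nonzero class mod $2$ is attained only at $\pm k_0$. Without that, extra oscillating terms could survive in the limit, or the normalization could fail.
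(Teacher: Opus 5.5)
Your proof is correct and is essentially the paper's argument: your normalized functions $g_n^{(a)}$ are, up to a factor $2$, the normalized coset sums in the paper's expansion~\eqref{eqFourierExpansion}, your coordinates $\lambda_n^{(a)}$ are, up to the same factor, the Fourier coefficients $f_n\langle 0\rangle$ and $f_n\langle k\rangle$, $k \in E$, and your limiting matrix $T$ is the paper's $\lim A_n$ up to rescaling of columns. Your Step~2 spells out the tail estimate that the paper only asserts, and that estimate is where quasi-reducedness is used in both versions.
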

	\begin{proof}
		We shall denote Fourier coefficients of a function $g$ by $g\langle\cdot\rangle$. From the definition of the space $R_2^\Omega$ is is easy to see that a function $g \in R_2^\Omega$ can be expressed as a Fourier series
		\[
		g(z) = \sum_{m \in \ZZ^2}g\langle m\rangle \exp\left(2\pi i m^Tz\right),
		\]
		where the coefficients satisfy the relations
		\begin{equation}\label{eqFourierRecursion}
			g\langle m + 2k\rangle = \exp\left(2\pi i(m+ 2k)^T\Omega(m+ 2k) - 2\pi im^T\Omega m\right)
		\end{equation}
		for all $m,k \in \ZZ^2$. With this preparation we can prove an auxiliary fact: the sequence $\{f_n\}_{n \in \NN}$ converges uniformly on compact subsets of $\CC^2$ if and only if the sequences of Fourier coefficients $\left\{f_n\langle k\rangle \right\}_{n \in \NN}$ converge for all $k \in E \cup \{0\}$. Indeed, if the sequence $\{f_n\}_{n \in \NN}$ converges, then all Fourier coefficients are convergent sequences, since Fourier coefficient can be restored from the function by integration. On the other hand, assume that the sequences $\left\{f_n\langle k\rangle \right\}_{n \in \NN}$ converge for all $k \in E \cup \{0\}$. From~\eqref{eqFourierRecursion} it is evident that
		\begin{multline}\label{eqFourierExpansion}
			f_n(z) = f_n\langle 0\rangle\sum_{k \in \ZZ^2}\exp\left(8\pi ic_nk^T\Omega k\right)\exp\left(2\pi i (2k)^Tz\right) + \\ \sum_{m \in E} f_n\langle m\rangle \sum_{k \in \ZZ^2}\exp\left[2\pi ic_n\left((m+ 2k)^T\Omega(m+ 2k) - m^T\Omega m\right)\right]\exp(2\pi i(2k + m)^Tz).
		\end{multline}
		It is easy to see that the condition $c_n \to +\infty$ implies that when $n \to +\infty$ we have
		\[
		\sum_{k \in \ZZ^2}\exp\left(8\pi ic_nk^T\Omega k\right)\exp\left(2\pi i (2k)^Tz\right) \to 1
		\]
		uniformly with respect to $z$ on compact sets in $\CC^2$. Using the fact that $\Omega$ is quasi-reduced we also find that
		\begin{multline*}
			\sum_{k \in \ZZ^2}\exp\left[2\pi ic_n\left((m+ 2k)^T\Omega(m+ 2k) - m^T\Omega m\right)\right]\exp(2\pi i(2k + m)^Tz) \to \\ \exp(2\pi i m^Tz) + \exp(-2\pi i m^Tz) = 2\cos(2\pi m^tz)
		\end{multline*}
		uniformly on compact sets in $\CC^2$ (that is, the only terms that survive passing to the limit correspond to $k = 0$ and $k = -m$). Thus, the auxiliary statement is proved. In the meantime we also have found a formula for the limit, namely
		\[
		\lim_{n \to +\infty} f_n(z) = \lim_{n \to +\infty} f\langle 0\rangle  + \sum_{m \in E}\left(\lim_{n \to +\infty} f_n\langle m \rangle\right)2\cos(2\pi m^tz).
		\]
		The statement~\ref{thetaconvergenceii} is, therefore, proved.
		
		Now we prove~\ref{thetaconvergencei}. Clearly, if the sequence $\{f_n\}_{n \in \NN}$ converges uniformly in a neighbourhood of zero, then the Taylor expansions also constitute a convergent sequence. Thus, it remains to prove the converse. Let $p_n = p_n^{(0)} + p_n^{(11)}z_1^2 + p_n^{(12)}z_1z_2 + p_n^{(22)}z_2^2$ and assume that all the coefficients converge when $n \to +\infty$. From~\eqref{eqFourierExpansion} it is easy to calculate coefficients of $p_n$ by using Fourier coefficients $f_n \langle 0\rangle$ and $f_n\langle k\rangle$, $k \in E$. Indeed, if we put
		\[
		\alpha_1 = \begin{pmatrix}
			1 & 0
		\end{pmatrix}^T,\;\;\alpha_2 = \begin{pmatrix}
			0 & 1
		\end{pmatrix}^T,\;\;\alpha_3 = \begin{pmatrix}
			1 & -1
		\end{pmatrix}^T,
		\]
		then we can calculate that
		\[
		\begin{pmatrix}
			p_n^{(0)} \\ p_n^{(11)} \\ p_n^{(12)} \\ p_n^{(22)}
		\end{pmatrix} = A_n\begin{pmatrix}
			f_n\langle 0\rangle \\ f_n\langle \alpha_1 \rangle \\
			f_n\langle \alpha_2 \rangle \\ f_n\langle \alpha_3 \rangle
		\end{pmatrix}
		\]
		with a suitable matrix $A_n$ (which is universal for all functions in $R_2^{c_n\Omega}$). The formula for entries of $A_n$ is quite easy to obtain but we shall not give it here, since all that we are interested in is the limit of $A_n$ when $n \to +\infty$, which is easily computed as
		\[
		\lim_{n \to +\infty} A_n = \begin{pmatrix}
			1 & 2 & 2 & 2 \\
			0 & -4\pi^2 & 0 & -4\pi^2\\
			0 & 0 & 0 & 8\pi^2\\
			0 & 0 & -4\pi^2 & -4\pi^2
		\end{pmatrix}.
		\]
		Thus, we conclude that the limit of $A_n$ is an invertible matrix, so for large $n$ all matrices $A_n$ are invertible and, therefore, for large $n$ we can write that \[
		\begin{pmatrix}
			f_n\langle 0\rangle \\ f_n\langle \alpha_1 \rangle \\
			f_n\langle \alpha_2 \rangle \\ f_n\langle \alpha_3 \rangle
		\end{pmatrix} = A_n^{-1}
		\begin{pmatrix}
			p_n^{(0)} \\ p_n^{(11)} \\ p_n^{(12)} \\ p_n^{(22)}
		\end{pmatrix}
		\]
		Since on the right-hand side of this equation all sequences converge (note that the matrix inversion is continuous), we obtain that Fourier coefficients of functions $f_n$ converge when $n \to +\infty$. Thus, using our auxiliary statement we get~\ref{thetaconvergencei}.
	\end{proof}
	
	The next step is to show applicability of Lemma~\ref{lemThetaConvergence} to the situation of Theorem~\ref{thPassingToTheLimit}. This is the subject of the following lemma.
	
	\begin{lemma}\label{lemQuasiReducedIfSubordinate}
		Let $\mathfrak D = (D_1,D_2,D_3)$ be a triple of disjoint open disks and assume that $f \in \mathfrak P_6$ is admissible and subordinate to $\mathfrak D$. Let $a_1,a_2,b_1,b_2$ be a symplectic basis in $\Per_f$ such that $a_1$ and $a_2$ are any two columns of $\mathfrak W(f;\mathfrak D)$. Then the Riemann matrix $\Omega = A^{-1}B$ is quasi-reduced, where $A$ and $B$ are formed from columns $a_1, a_2$ and $b_1, b_2$ respectively.
	\end{lemma}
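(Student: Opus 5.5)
Our plan is to prove quasi-reducedness indirectly, by showing that any counterexample could be iterated into a contradiction with the convergence results of Section~\ref{sec:Iter}. The key tool is Thomae's formula, used as in the proof of Lemma~\ref{lemPartialConvergence}~\ref{PartialConvergenceiii}.

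First we reduce to a single normalization. We permute the discs so that $a_1,a_2$ are the first two columns of $\mathfrak W(f;\mathfrak D)$. Changing the symplectic completion $b_1,b_2$ replaces $\Omega$ by $\Omega+S$ with $S$ symmetric integral, which leaves $\mathrm{Im}\,\Omega$ unchanged, so the condition of Definition~\ref{defQuasiReduced} depends only on $a_1,a_2$. Choosing a different pair of columns (or negating them) replaces $A$ by $AU$ with $U\in GL_2(\ZZ)$ permuting the three vectors of $E$ up to sign. Then $\mathrm{Im}\,\Omega$ becomes $U^T(\mathrm{Im}\,\Omega)U$, which preserves the set $E$ up to sign and the cosets modulo $2\ZZ^2$. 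So it suffices to treat one labelling.

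Next we take $f^{(n)}=H^n_{\mathfrak D}(f)$. By Proposition~\ref{propPeriodsRelationWithSubordination} and Proposition~\ref{propRichProperties}~\ref{Richii}, the Riemann matrix of $f^{(n)}$ in the basis $a_1,a_2,2^nb_1,2^nb_2$ is $2^n\Omega$. By Theorem~\ref{thConvergenceOfPolynomials}, all six roots converge to three double roots $t_1,t_2,t_3$, one in each disc. The distances between roots in different discs stay bounded away from $0$ (shrink the discs to closed subdiscs $F_j$ using Lemma~\ref{lemConvergenceIfSmall}~\ref{ConvergenceIfSmalli}), and $\delta_j(f^{(n)})\to0$ strictly quadratically by Lemma~\ref{lemConvergenceIfSmall}~\ref{ConvergenceIfSmallii}. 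So $\log\delta_j(f^{(n)}) = -\kappa_j 2^n + O(1)$ with $\kappa_j>0$; the existence of such $\kappa_j$ is a slight sharpening of the strict bounds, obtained by telescoping $\log\delta_{n+1}-2\log\delta_n = O(1)$.

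On the theta side we expand each constant $\theta\begin{bmatrix}\beta\\ \alpha\end{bmatrix}(0,2^n\Omega)$ with $\beta\neq0$. Its leading asymptotics is $\exp(-\pi 2^n \min_{m\equiv 2\beta} \tfrac14 m^T\mathrm{Im}\,\Omega\, m)$. If the minimum is attained only at $\pm m$, the coefficient is a sum of at most two nonvanishing exponentials of modulus one, which may cancel for specific $\alpha$. The three nonzero classes $2\beta$ modulo $2$ correspond exactly to the classes of the vectors in $E$ (note $(1,1)\equiv(1,-1)$). Plugging into the three Thomae identities from the proof of Lemma~\ref{lemPartialConvergence}~\ref{PartialConvergenceiii}, the left sides behave like $\exp(-2^{n+1}(\kappa_j+\kappa_k))$. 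The right side is a product of two theta constants with the same top characteristic $\beta_{jk}$, and has exponent $-2^n\cdot 4\pi\,\mu_{jk}$, where $\mu_{jk}$ is the minimal norm over the class of $2\beta_{jk}$ divided by $4$, provided no cancellation happens. Equating exponents gives $\kappa_j+\kappa_k = \pi\mu_{jk}$ for the three pairs, and hence an identification of each $\kappa_j$ with quadratic-form data.

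The main obstacle is extracting strictness, that is, ruling out a tie $m\neq\pm k$ in some class. Our plan is to use the non-vanishing of the left side of Thomae's formula. If two non-opposite vectors of equal minimal norm lie in the class of $2\beta_{jk}$, the leading term of one of the two theta constants is a trigonometric sum in $2^n\mathrm{Re}\,\Omega$ and the characteristic. For the characteristic $\alpha_j+\alpha_k$ or $0$, at least one of the two theta constants then has leading coefficient oscillating or vanishing along a subsequence, or exhibits a sign pattern incompatible with the exact doubling $\log\delta_{n+1}=2\log\delta_n+O(1)$ given by \eqref{eqRootDiff}. I would first try to show directly that a tie forces the leading coefficient of one of the two constants to vanish identically, by checking parities with the sign $(-1)^{m^T\alpha}$. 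If that fails, a fallback is a continuity argument. Quasi-reducedness is an open condition that holds when the roots in each disc nearly coincide (then $\mathrm{Im}\,\Omega$ is nearly diagonal in a suitable sense). The set of admissible $f$ subordinate to $\mathfrak D$ is connected, being a product of configuration spaces in discs. Ties along a path would produce a boundary point where the Thomae asymptotics just described give a contradiction, so quasi-reducedness propagates from the nearly degenerate configurations to all of them.
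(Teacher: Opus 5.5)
Your plan has a genuine gap, and it comes before the one you flag. Thomae's formula together with $\ln\delta_j(f^{(n)})=-\kappa_j2^n+O(1)$ only controls the decay rates of the theta constants with top characteristic $\beta\neq0$. At best this gives the minimal values of $Q(v)=v^T(\mathrm{Im}\,\Omega)v$ on the three nonzero classes of $\ZZ^2/2\ZZ^2$. It never tells you that these minima are attained at $(1,0)^T$, $(0,1)^T$, $(1,-1)^T$, so not even the non-strict inequality $Q(k)\le Q(m)$ is in hand.

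In fact no argument built only from these data can work. Let $U=\begin{pmatrix}1&2\\0&1\end{pmatrix}$ and take the symplectic basis $A'=AU$, $B'=BU^{-T}$, so that $\Omega'=U^{-1}\Omega U^{-T}$. For every $n$, $(A',2^nB')$ is again a symplectic basis of $\Per_{f^{(n)}}$. Moreover $\theta\begin{bmatrix}\beta\\ \alpha\end{bmatrix}(0,V^T\tau V)=\theta\begin{bmatrix}V\beta\\ V^{-T}\alpha\end{bmatrix}(0,\tau)$ with $V=U^{-T}\equiv I \pmod 2$, so each theta constant at $2^n\Omega'$ is a fixed unimodular multiple of the same theta constant at $2^n\Omega$. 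Hence every identity and estimate you use holds verbatim for $\Omega'$. Yet $Q'(v)=Q(U^{-T}v)$ gives $Q'((1,2)^T)=Q((1,0)^T)<Q((1,-2)^T)=Q'((1,0)^T)$ whenever $\Omega$ is quasi-reduced, so $\Omega'$ is not quasi-reduced.

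Your parity mechanism for excluding ties also fails, because whether the two leading terms cancel depends on $2^n\,\mathrm{Re}\,\Omega$, not on parities. What Thomae plus $\kappa_j>0$ does yield is strict triangle inequalities between the three class minima, i.e.\ a unique minimizer up to sign in each class (cancellations only speed up decay, which works in your favour). But nothing places that minimizer in $E$. A smaller point: under $A\mapsto AU$ one has $\mathrm{Im}\,\Omega\mapsto U^{-1}(\mathrm{Im}\,\Omega)U^{-T}$, not $U^T(\mathrm{Im}\,\Omega)U$. Your first reduction still works with this correction.

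The continuity fallback does not repair this. It pushes the whole content into the base case ``quasi-reduced when the roots in each disc nearly coincide'', which you only assert. That limiting form is also not nearly diagonal: it is proportional to $\kappa_1v_1^2+\kappa_2v_2^2+\kappa_3(v_1+v_2)^2$, whose cross term need not be small.

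The missing ingredient is something that ties $\mathrm{Im}\,\Omega$ to the particular cycles $\gamma_1,\gamma_2$. The paper gets it from Riemann's bilinear relations: $2^nQ(v)=i\int_{\CC}|w_v|^2\,dx\wedge d\bar x/|f^{(n)}|$, where $w_v\in\mathfrak P_1$ is fixed by its periods $v_1,v_2$ over $\gamma_1,\gamma_2$ and stays unchanged along the iteration by Proposition~\ref{propPeriodsRelationWithSubordination}. The integral localizes near the double roots $t_j$ of the limit, where it diverges like a multiple of $-|w_v(t_j)|^2\ln\delta_j^{(n)}$. Residues give $|w_v(t_j)|^2$ proportional to $v_1^2$, $v_2^2$, $(v_1+v_2)^2$. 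Hence
$2^nQ(v)=-\pi^{-1}\bigl(v_1^2\ln\delta_1^{(n)}+v_2^2\ln\delta_2^{(n)}+(v_1+v_2)^2\ln\delta_3^{(n)}\bigr)+O(1)$.
For $k\in E$, $m\equiv k$, $m\neq\pm k$, the coefficients of $-\ln\delta_j^{(n)}$ in $2^n(Q(m)-Q(k))$ are nonnegative and not all zero, so this quantity tends to $+\infty$ and $Q(m)>Q(k)$. With this in hand neither Thomae nor continuity is needed, since $2^n\Omega$ is already the period matrix of the nearly degenerate $f^{(n)}$.
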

	
	To prove this fact we need a couple of auxiliary statements.
	\begin{lemma}\label{lemIntegralEstimates}
		Let $\mathbb D \subset \CC$ denote the closed unit disc.
		\begin{enumerate}[label=(\roman*)]
			\item\label{IntegralEstimatesi} There exists a constant $\Phi_1 > 0$ such that for all $\varepsilon > 0$ and for all $a, b \in \CC$ such that $|a|,|b| \le \varepsilon$ and $|a - b| \ge \varepsilon/2$ the inequality
			\[
			\int_{\varepsilon \mathbb D} \frac{idx \wedge d\bar{x}}{|x - a||x - b|} \le \Phi_1
			\]
			holds.
			\item\label{IntegralEstimatesii} There exists a constant $\Phi_2$ such that for all $\varepsilon > 0$ and for all $a,b \in \CC$ such that $|a|,|b| \le \varepsilon/2$ the inequality
			\[
			\int_{\CC \setminus \varepsilon\mathbb D} \left|\frac{1}{|x|^2} - \frac{1}{|x - a||x - b|}\right|idx \wedge d\bar{x} \le \Phi_2.
			\]
			\item\label{IntegralEstimatesiii} Let $r > 0$ and let $f$ be holomorphic in a neighborhood of $r\mathbb D$. Then there exists a constant $\Phi_3$ (that depends on $r$ and $f$) such that 
			\[
				\left|\int_{r\mathbb D \setminus \varepsilon \mathbb D} \dfrac{i|f(x)|^2dx \wedge d\bar{x}}{|x^2|^2} + 4\pi |f(0)|^2\ln(\varepsilon)\right| \le \Phi_3
			\]
			for all $\varepsilon \in (0, r)$.
		\end{enumerate}
	\end{lemma}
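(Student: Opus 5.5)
The three estimates are elementary, and I would prove each by a scaling reduction followed by a direct pointwise bound on the integrand. Throughout I use that $i\,dx\wedge d\bar x = 2\,dA$, where $dA$ is Lebesgue measure on $\CC$. In \ref{IntegralEstimatesiii} I read the denominator as $|x|^2$ (that is, $|x^2|$ rather than $|x^2|^2$). With $|x|^4$ the integral would blow up like $\varepsilon^{-2}$, not like $\ln\varepsilon$, so this is the only reading consistent with the logarithmic correction term.

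For \ref{IntegralEstimatesi} and \ref{IntegralEstimatesii} I first substitute $x = \varepsilon u$, $a = \varepsilon a'$, $b = \varepsilon b'$. Both integrands are homogeneous of degree $-2$ and the area form scales by $\varepsilon^2$, so both integrals are independent of $\varepsilon$, and it suffices to treat $\varepsilon = 1$.

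For \ref{IntegralEstimatesi} we then have $|a|,|b|\le 1$ and $|a-b|\ge 1/2$. I split the unit disc into the points closer to $a$ than to $b$ and the rest. On the first set $|x-b|\ge |a-b|/2\ge 1/4$, so the integrand is at most $4/|x-a|$. That set lies in the disc of radius $2$ about $a$, where $\int dA/|x-a| = 4\pi$. The second set is handled symmetrically, which gives an explicit universal $\Phi_1$.

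For \ref{IntegralEstimatesii}, with $\varepsilon=1$ we have $|a|,|b|\le 1/2$ and $|x|\ge 1$, hence $|x-a|,|x-b|\ge |x|/2$. The integrand equals
\[
\frac{\bigl||x-a||x-b|-|x|^2\bigr|}{|x|^2|x-a||x-b|}.
\]
Its numerator is at most $|x|(|a|+|b|)+|a||b|\le 2|x|$, using $\bigl||x-a||x-b|-|x||x-b|\bigr|\le |a||x-b|$ and the analogous step for $b$. Its denominator is at least $|x|^4/4$. So the integrand is $O(|x|^{-3})$ on $|x|\ge1$, which is integrable, and this gives $\Phi_2$.

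For \ref{IntegralEstimatesiii} I write $|f(x)|^2 = |f(0)|^2 + g(x)$. Since $f$ is holomorphic, hence Lipschitz, on a neighbourhood of $r\mathbb D$, we have $|g(x)|\le K|x|$ there. The main term is computed exactly in polar coordinates:
\[
\int_{r\mathbb D\setminus\varepsilon\mathbb D}\frac{2\,dA}{|x|^2} = 4\pi(\ln r - \ln\varepsilon).
\]
So it contributes $-4\pi|f(0)|^2\ln\varepsilon$ plus the constant $4\pi|f(0)|^2\ln r$. The remainder is bounded by $\int_{r\mathbb D} 2K\,dA/|x| = 4\pi K r$, uniformly in $\varepsilon$. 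Taking $\Phi_3 = 4\pi|f(0)|^2|\ln r| + 4\pi K r$ finishes the proof.

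The only delicate point is the numerator bound in \ref{IntegralEstimatesii}: the $|x|^{-2}$ leading behaviour must cancel between the two terms so that the remainder decays like $|x|^{-3}$. The triangle-inequality estimate above achieves exactly this cancellation. Everything else is routine.
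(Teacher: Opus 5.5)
Your proof is correct and follows the same route as the paper's sketch: you rescale $x\mapsto x/\varepsilon$ so that (i) and (ii) no longer depend on $\varepsilon$ and then use elementary pointwise bounds, and in (iii) you replace $|f(x)|^2$ by $|f(0)|^2$ at the cost of an integrable error and compute the main term in polar coordinates. Reading the denominator in (iii) as $|x|^2$ is also the intended reading, as the application of the lemma in the proof of Lemma~\ref{lemQuasiReducedIfSubordinate} confirms.
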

	\begin{proof}
		We only give a sketch of the proof, because the calculations are elementary. Statements~\ref{IntegralEstimatesi} and~\ref{IntegralEstimatesii} are proved by scaling the variable of integration $x$ by $1/\varepsilon$, after which both statements do not depend on $\varepsilon$ and can be proved by elementary estimations. The statement~\ref{IntegralEstimatesiii} is proved by noting that the function $x \mapsto (|f(x)|^2 - |f(0)|^2)/|x|^2$ is Lebesgue integrable on $r\mathbb D$, so we can replace $|f(x)|^2$ in the numerator with $|f(0)|^2$. After that the integral can be explicitly computed in polar coordinates.
	\end{proof}

	\begin{proof}[Proof of Lemma~\ref{lemQuasiReducedIfSubordinate}]
		Clearly, we can assume without loss of generality that $D_j \subset \CC$, $j = 1,2,3$. Moreover, we can assume that $a_1$ and $a_2$ are the first and the second column of $\mathfrak W(f;\mathfrak D)$ respectively. From the Riemann's bilinear relations~\cite[Proposition~III.2.3]{FarkasKra1980} we can calculate that for all $\begin{pmatrix} \alpha & \beta \end{pmatrix}^T \in \RR$ the equality
		\[
		\begin{pmatrix}
			\alpha & \beta
		\end{pmatrix} \;(\mathrm{Im}\;\Omega) \begin{pmatrix}
			\alpha \\ \beta
		\end{pmatrix} = \frac{i}{2}\int_{\mathcal X_f}\omega \wedge \bar{\omega}
		\]
		holds, where holomorphic $1$-form $\omega$ is normalized by the conditions
		\[
		\int_{\gamma_j} \omega = \alpha_j,\;\; j = 1,2,
		\]
		where $\gamma_j \in H_1(\mathcal X_f, \ZZ)$ is the cycle that corresponds to the period $a_j$ for $j = 1,2$. By definition of the periods $a_1$ and $a_2$ we, therefore, obtain that
		\begin{equation}\label{eqRiemannBilinearFormula}
			\begin{pmatrix}
				\alpha & \beta
			\end{pmatrix} \;(\mathrm{Im}\;\Omega) \begin{pmatrix}
				\alpha \\ \beta
			\end{pmatrix} = i\int_{\CC}\frac{|w(x)|^2dx \wedge d\bar{x}}{|f(x)|},
		\end{equation}
		where $w \in \mathfrak P_1$ is chosen to satisfy
		\begin{equation}\label{eqNormalizationByVector}
			\int_{\gamma_j}\frac{w(x)dx}{u(x)} = \alpha_j,\;\;j =1,2.
		\end{equation}
		Here $u$ is a single-valued branch of the square root of $f$ defined in a neighbourhood of $\CC\PP(1)\setminus(D_1 \cup D_2 \cup D_3)$ and $\gamma_j$ is the circle enclosing $D_j$ (note that the change of sign of the branch $u$ results in the sign change of $w$, so the formula~\eqref{eqRiemannBilinearFormula} is not affected).
		
		Now consider the sequence $f^{(n)}$ generated by the rules~\eqref{eqDefRichSeq}. Then from~\eqref{eqRiemannBilinearFormula} and Proposition~\ref{propPeriodsRelationWithSubordination} it follows that
		\begin{equation}\label{eqRiemannBilinearFormulaIteration}
			2^n\begin{pmatrix}
				\alpha & \beta
			\end{pmatrix} \;(\mathrm{Im}\;\Omega) \begin{pmatrix}
				\alpha \\ \beta
			\end{pmatrix} = i\int_{\CC}\frac{|w(x)|^2dx \wedge d\bar{x}}{|f^{(n)}(x)|},
		\end{equation}
		since the period matrix is doubled with each iteration of the Richelot construction and the normalization condition of the $1$-form $w(x)dx/y$ is not affected by it in view of Proposition~\ref{propPeriodsRelationWithSubordination}. With this preparation we are ready to state the main idea of the proof. Let $k \in E$ (see Definition~\ref{defQuasiReduced}) and $m \in \ZZ^2$ such that $m - k \in 2\ZZ^2$ and $m \ne \pm k$. We need to prove that $k^T (\mathrm{Im}\;\Omega) k < m^T (\mathrm{Im}\;\Omega) m$. For this it suffices to prove that 
		\[
		2^n(m^T (\mathrm{Im}\;\Omega) m - k^T (\mathrm{Im}\;\Omega) k) \to +\infty, \text{ when } n \to +\infty.
		\]
		We prove this by applying~\ref{eqRiemannBilinearFormulaIteration} and analyzing the behavior when $n \to +\infty$.
		
		For convenience we shall call two sequences of complex numbers {\it{equivalent}} if their difference is bounded. Also let $g(x) = c(x-t_1)^2(x-t_2)^2(x-t_3)^2$ denote the limit of the sequence $f^{(n)}$, and let $\delta_j^{(n)}$ denote the distance between the two roots of $f^{(n)}$ that belong to $D_j$. Now we claim that given $\begin{pmatrix} \alpha & \beta \end{pmatrix}^T \in \RR$ and the corresponding polynomial $w$ normalized by~\eqref{eqNormalizationByVector} the sequences
		\begin{multline}\label{eqEquivalentSequences}
			\left\{2^n\begin{pmatrix}
				\alpha & \beta
			\end{pmatrix} \;(\mathrm{Im}\;\Omega) \begin{pmatrix}
				\alpha \\ \beta
			\end{pmatrix}\right\}_{n \in \NN} \text{ and }\\
			\left\{  -\frac{4\pi|w(t_1)|^2 \ln\left(\delta_1^{(n)}\right)}{|c||t_1 - t_2|^2 |t_1 - t_3|^2} - \frac{4\pi|w(t_2)|^2 \ln\left(\delta_2^{(n)}\right)}{|c||t_2 - t_1|^2 |t_2 - t_3|^2 } - \frac{4\pi|w(t_1)|^2 \ln\left(\delta_3^{(n)}\right)}{|c||t_3 - t_1|^2 |t_3 - t_2|^2}  \right\}_{n \in \NN}
		\end{multline}
		are equivalent. To prove this at first consider closed disks $F_1,F_2,F_3$ such that $F_j \subset D_j$, the center of $F_j$ is $t_j$, and the smallest distance between any two of these disks is bigger than diameter of any of them. Clearly, as we are interested only in behavior when $n \to +\infty$ we can assume that the roots of $f$ belong to $F_1 \cup F_2 \cup F_3$. With this notation it is clear that the sequence
		\[
		\left\{i\int_{\CC \setminus {F_1 \cup F_2 \cup F_3}}\frac{|w(x)|^2dx \wedge d\bar{x}}{|f^{(n)}(x)|}\right\}_{n \in \NN}
		\]
		is bounded. Therefore, the sequences
		\[
		\left\{i\int_{\CC}\frac{|w(x)|^2dx \wedge d\bar{x}}{|f^{(n)}(x)|}\right\}_{n \in \NN} \text{ and } \left\{i\int_{F_1 \cup F_2 \cup F_3}\frac{|w(x)|^2dx \wedge d\bar{x}}{|f^{(n)}(x)|}\right\}_{n \in \NN}
		\]
		are equivalent. Further, let $G^{(n)}_j$ denote the closed disk with center $t_j$ and radius $2\delta_j^{(n)}$. From Lemma~\ref{lemConvergenceIfSmall}~\ref{ConvergenceIfSmalli} it is easy to see that if $a$ is a root of $f^{(n)}$ that belongs to $F_j$, then $|a - t_j| \le \delta_j^{(n)}$, since $t_j$ has to belong to a disk, whose diameter is the segment between the pair of roots of $f^{(n)}$ that belong to $F_j$. Therefore, Lemma~\ref{lemIntegralEstimates}~\ref{IntegralEstimatesi} implies that the sequence
		\[
		\left\{i\int_{G_j^{(n)}}\frac{|w(x)|^2dx \wedge d\bar{x}}{|f^{(n)}(x)|}\right\}_{n \in \NN}
		\]
		is bounded for all $j = 1,2,3$. Thus, the sequences
		\[
		\left\{i\int_{F_j}\frac{|w(x)|^2dx \wedge d\bar{x}}{|f^{(n)}(x)|}\right\}_{n \in \NN} \text{ and } \left\{i\int_{F_j \setminus G_j^{(n)}}\frac{|w(x)|^2dx \wedge d\bar{x}}{|f^{(n)}(x)|}\right\}_{n \in \NN}
		\]
		are equivalent. Further, by Lemma~\ref{lemIntegralEstimates}~\ref{IntegralEstimatesii} we can replace the roots of $f^{(n)}$ that belong to $F_j$ with the center of $F_j$ (i.e. with $t_j$) preserving the equivalence class of the sequence. That is, if $f^{(n)}(x) = c^{(n)}p_1^{(n)}(x)p_2^{(n)}(x)p_3^{(n)}(x)$, where $p_j^{(n)} \in \mathfrak P_2$ is monic and its roots belong to $F_j$, then the sequences
		\begin{multline*}
			\left\{i\int_{F_j \setminus G_j^{(n)}}\frac{|w(x)|^2dx \wedge d\bar{x}}{|f^{(n)}(x)|}\right\}_{n \in \NN} \text{ and }\\ \left\{i\int_{F_j \setminus G_j^{(n)}}\frac{|w(x)|^2dx \wedge d\bar{x}}{|c^{(n)}||x - t_j|^2\displaystyle\prod_{k \in \{1,2,3\}\setminus \{j\}}|p_k^{(n)}(x)|}\right\}_{n \in \NN}
		\end{multline*}
		are equivalent. Now note that
		\[
		\frac{1}{|c_n|\displaystyle\prod_{k \in \{1,2,3\}\setminus \{j\}}|p_k^{(n)}(x)|} \to \frac{1}{c\displaystyle\prod_{k \in \{1,2,3\}\setminus \{j\}}|x - t_k|^2}
		\]
		for all $x \in F_j$ and the convergence is uniformly quadratic (i.e. the absolute value of the difference can be uniformly estimated from above by a sequence that converges to $0$ quadratically). On the other hand, in view of Lemma~\ref{lemIntegralEstimates}~\ref{IntegralEstimatesiii}, for all $n \in \NN$ we have
		\[
		i\int_{F_j \setminus G_j^{(n)}}\frac{|w(x)|^2dx \wedge d\bar{x}}{|x - t_j|^2} \le -A\ln\left(\delta_j^{(n)}\right) + B
		\]
		with suitable constants $A > 0$ and $B \in \RR$. Moreover, from Lemma~\ref{lemConvergenceIfSmall}~\ref{ConvergenceIfSmallii} we know that the convergence $\left(\delta_j^{(n)}\right) \to 0$ is strictly quadratic, i.e. growth of $-\ln\left(\delta_j^{(n)}\right)$ is estimated from above by $C2^n + D$ with a suitable constants $C > 0$ and $D \in \RR$. Since the product of a sequence that quadratically converges to $0$ by a geometric progression still converges to zero, we conclude that the sequences
		\begin{multline}\label{eqFinalSequence}
			\left\{i\int_{F_j \setminus G_j^{(n)}}\frac{|w(x)|^2dx \wedge d\bar{x}}{|c^{(n)}||x - t_j|^2\displaystyle\prod_{k \in \{1,2,3\}\setminus \{j\}}|p_k^{(n)}(x)|}\right\}_{n \in \NN} \text{ and }\\ \left\{i\int_{F_j \setminus G_j^{(n)}}\frac{|w(x)|^2dx \wedge d\bar{x}}{|c||x - t_1|^2|x - t_2|^2|x - t_3|^2}\right\}_{n \in \NN}
		\end{multline}
		are equivalent. The asymptotic behavior of the second sequence from~\eqref{eqFinalSequence} is given by Lemma~\ref{lemIntegralEstimates}~\ref{IntegralEstimatesiii}: it is equivalent to the sequence of corresponding summands in~\eqref{eqEquivalentSequences}. Thus, through a chain of equivalences we showed the equivalence of the sequences in~\eqref{eqEquivalentSequences}. Now we make a step further and calculate $w(t_j)$ for $j = 1,2,3$. To do this note that $w$ is normalized to satisfy~\eqref{eqNormalizationByVector} and, since this normalization does not depend on $n$, we obtain
		\[
		\int_{\gamma_j}\frac{w(x)dx}{\sqrt{c}(x - t_1)(x - t_2)(x - t_3)} = \alpha_j
		\]
		by taking limit when $n \to +\infty$ (note that this holds for $j = 3$ if we put $\alpha_3 = -\alpha_1 - \alpha_2$). Thus, by calculating integrals using residues it is easy to obtain that 
		\[
		\begin{gathered}
			w(t_1) = \frac{1}{2\pi i} \alpha_1\sqrt{c}(t_1 - t_2)(t_1 - t_3),\;\; w(t_2) = \frac{1}{2\pi i} \alpha_2\sqrt{c}(t_2 - t_1)(t_2 - t_3),\\
			w(t_3) = -\frac{1}{2\pi i} (\alpha_1 + \alpha_2)\sqrt{c}(t_3 - t_1)(t_3 - t_2).
		\end{gathered}
		\]
		By substituting the result into~\eqref{eqEquivalentSequences} we get that the sequences
		\begin{multline}\label{eqEquivalentSequencesFinal}
			\left\{2^n\begin{pmatrix}
				\alpha & \beta
			\end{pmatrix} \;(\mathrm{Im}\;\Omega) \begin{pmatrix}
				\alpha \\ \beta
			\end{pmatrix}\right\}_{n \in \NN} \text{ and }\\
			\left\{ -\frac{\alpha_1^2 \ln\left(\delta_1^{(n)}\right) + \alpha_2^2 \ln\left(\delta_2^{(n)}\right) +(\alpha_1 + \alpha_2)^2 \ln\left(\delta_1^{(n)}\right)}{\pi}   \right\}_{n \in \NN}
		\end{multline}
		are equivalent.
		
		With equivalence of the sequences~\eqref{eqEquivalentSequencesFinal} we can finish the proof of Lemma~\ref{lemQuasiReducedIfSubordinate}. Indeed, consider for example $k = \begin{pmatrix} 1 & 0 \end{pmatrix}^T$ and any $m = \begin{pmatrix} m_1 & m_2\end{pmatrix}^T$ such that $m - k \in 2\ZZ^2$. From the equivalence of sequences~\eqref{eqEquivalentSequencesFinal} we get that the sequences
		\begin{multline}\label{eqEquivalentSequencesExample}
			\left\{2^n(m^T (\mathrm{Im}\;\Omega) m - k^T (\mathrm{Im}\;\Omega) k) \right\}_{n \in \NN} \text{ and } \\ \left\{ -\frac{(m_1^2 - 1) \ln\left(\delta_1^{(n)}\right) + m_2^2 \ln\left(\delta_2^{(n)}\right) +((m_1 + m_2)^2-1) \ln\left(\delta_1^{(n)}\right)}{\pi}   \right\}_{n \in \NN}
		\end{multline}
		are equivalent. Since $m - k \in 2\ZZ^2$ we conclude that $m_1$ and $m_1 + m_2$ are odd integer numbers. Therefore, the second sequence in~\eqref{eqEquivalentSequencesExample} converges to $+\infty$, unless $m_1^2 = 1$, $m_2 = 0$, and $(m_1 + m_2)^2 = 1$. Clearly, this is possible only if $m = \pm k$. Similar argument works for all vectors $k \in E$, and the proof is finished.
	\end{proof}
	\begin{proof}[Proof of Theorem~\ref{thPassingToTheLimit}]
		We give a proof only for the sequence $S^{f^{(n)}}$, as the proof is the same for other Kleinian functions (we only need the fact of convergence of Taylor coefficients of order $\le 2$, which holds for $S^{f^{(n)}}$ by definition, and for functions $S_{jk}^{f^{(n)}}$ by~\eqref{eqTaylorExpansions}). Let $a_1,a_2,b_1,b_2$ be a symplectic basis in $\Per_f$ such that $a_1$ and $a_2$ are the first two columns of $\mathfrak W(f;\mathfrak D)$. Let $A$ and $B$ be formed of the columns $a_1,a_2$ and $b_1,b_2$ respectively and put $\Omega = A^{-1}B$. By Lemma~\ref{lemQuasiReducedIfSubordinate} $\Omega$ is quasi-reduced. Moreover, since $a_1,a_2 \in \Per_{f^{(n)}}$ we can consider matrices $\eta_A^{(n)}$ formed of the columns $\eta^{f^{(n)}}(a_1)$, $\eta^{f^{(n)}}(a_2)$. From the definition it is clear that matrices $\eta_A^{(n)}$ converge to the matrix $\eta^{(\infty)}_A$ that consists of the first two rows of $\mathfrak E(g;\mathfrak D)$. Now consider the isomorphisms $T^{(n)}:R_2^{2^n\Omega} \to \mathfrak S_{f^{(n)}}$ defined in~\eqref{eqTisomorphism} and let $\phi^{(n)} = \left(T^{(n)}\right)^{-1}\left(S^{f^{(n)}}\right) \in R_2^{2^n\Omega}$. It is easy to see that for $S \in \mathfrak S_{f^{(n)}}$ we have
		\[
		\left(T^{(n)}\right)^{-1}\left(S\right)(z) = \exp\left[-z^TA^T\eta^{(n)}_Az\right]S(Az).
		\]
		From this formula it is evident that the sequence of order $2$ Taylor expansions of functions $\phi^{(n)}$ converges, thus by Lemma~\ref{lemThetaConvergence} the sequence $\phi^{(n)}$ converges uniformly on compact sets to a function that belongs to $\Span\{1, \sin^2(\pi z_1), \sin^2(\pi z_2), \sin^2(\pi (z_1 + z_2))\}$. From this it follows that the sequence $S^{f^{(n)}}$ uniformly on compact sets converges to a function that has the form
		\[
		\exp[z^T\eta_A^{(\infty)}A^{-1}z](\alpha + \beta \sin^2(\pi r_1z) + \gamma \sin^2(\pi r_2 z) + \delta \sin^2(\pi r_3z)),
		\]
		where $r_1$ and $r_2$ are the rows of $A^{-1}$ and $r_3 = r_1 - r_2$. It remains to note that by definition the matrix $L$ has the rows $\pm r_1,\pm r_2,\pm r_3$, and $M = \eta_A^{(\infty)}A^{-1}$.
	\end{proof}
	
	\section{Algorithms and numerical experiments}\label{sec:Algo}
	
	\subsection{Algorithm to compute periods.} Here we present a variation of a well-known algorithm (see, e.g.~\cite{BostMestre} and~\cite{Chow}) that computes certain periods of the curve $\mathcal X_f$. We formulate the algorithm for admissible polynomials $f$ that are subordinate to a triple of disks. The main novelty that we introduce is an additional step in the algorithm that allows to compute the values $\eta^f(w)$ for calculated periods $w$. The idea is to use the formula~\eqref{eqEMatrixTransform} and the explicit expression~\eqref{eqChiFormula} for the matrix $\mathfrak H^{p,q,r}$. 
	
	Consider a polynomial $f \in \mathfrak P_6$ that is subordinate to $\mathfrak D = (D_1,D_2,D_3)$. Below we present an algorithm to compute the matrices $\mathfrak W(f;\mathfrak D)$ and $\mathfrak E(f;\mathfrak D)$ defined in~\eqref{eqWMatrix} and~\eqref{eqEMatrix} respectively.
	
	\begin{algo}\label{algPeriods}
		\begin{enumerate}
			\item Calculate the sequence of polynomials $f^{(n)}$, $n = 0, \dots, N$, where $f^{(0)} = f$ and $f^{(n+1)} = H_{\mathfrak D}(f^{(n)})$ for $n = 1, \dots, N-1$. $N$ should be chosen large enough, so that the polynomial $f^{(N)}$ is sufficiently close to a polynomial $g(x) = C(x - t_1)^2(x - t_2)^2(x - t_3)^2$, where $t_j \in D_j$ for $j = 1,2,3$.
			\item Compute the matrices 
			\[ \mathfrak W(g; \mathfrak D),\;\; \mathfrak E(g;\mathfrak D)\]
			by using analytic formulas (which are trivial to obtain via calculus of residues). The matrix $\mathfrak W(g; \mathfrak D)$ is already an approximation of the matrix $\mathfrak W(f;\mathfrak D)$.
			\item Calculate recursively the matrices $\mathfrak E^{(n)}$ by the rules $\mathfrak E^{(N)} = \mathfrak E(g;\mathfrak D)$ and
			\[
			\mathfrak E^{(n)} = 2\mathfrak E^{(n+1)} + \mathfrak H^{p_1^{(n)}p_2^{(n)}p_3^{(n)}} \mathfrak W(g;\mathfrak D) \text{ for } n = N-1,\dots,0,
			\]
			where $f^{(n)} = p_1^{(n)}p_2^{(n)}p_3^{(n)}$ is the decomposition associated with the triple $\mathfrak D$. The matrix $\mathfrak E^{(0)}$ is an approximation of $\mathfrak E(f;\mathfrak D)$.
		\end{enumerate}    
	\end{algo}
	
	\begin{remark}
		It is noteworthy that if all the roots of $f$ lie on a common circle then it is possible to formulate an algorithm that does not require a triple of disks and computes a full symplectic basis in $\Per_f$ rather than a couple of vectors that generate a maximal isotropic subgroup in $\Per_f$. The idea is to arrange the roots of the polynomial on each step into three pairs according to the order of the roots on the circle, i.e. each pair consists of neighbouring roots (it is easy to see that this arrangement is in fact subordinate to a suitable triple of disks, so the algorithm can be viewed as a special case as Algorithm~\ref{algPeriods}). There are essentially two ways of partitioning six points on a circle into three pairs according to their order and by running the algorithm for both cases yields a symplectic basis in $\Per_f$. This version of the algorithm is presented in~\cite{BostMestre}. Using the idea of the third step of Algorithm~\ref{algPeriods} it can be modified to compute values $\eta^f$ at the basis vectors as well.
	\end{remark}
	
	\subsection{Algorithm to compute values of Kleinian functions}
	Now we formulate the algorithms to compute canonical Kleinian functions of weight $2$ (and their first derivatives) associated with a polynomial $f$ that is subordinate to a triple of disks $\mathfrak D = (D_1,D_2,D_3)$. For completeness we provide the additional steps for this algorithm that compute other special functions, namely, $\wp^f_{jk}$ for general $f$, and classical Kleinian functions $\sigma^f$ and $\zeta_j^f$ if $f$ is in Weierstrass form, i.e. $\deg f = 5$ and $f_5 = 4$. The relations involving these functions and Kleinian functions of weight 2, that are necessary for the algorithm below are presented in~\cite{KleinianWeight2}.
	
	As usual, we assume that $f \in \mathfrak P_6$ is an admissible polynomial that is subordinate to $\mathfrak D = (D_1,D_2,D_3)$. Moreover, fix $z \in \CC^2$. The following algorithm computes approximations of Kleinian functions associated with the polynomial $f$ at the point $z$. We labelled by an asterisk the last step of it to emphasize that it is applicable only to polynomials in Weierstrass form.
	\begin{algo}\label{algFunctions}
		\begin{enumerate}
			\item Calculate the sequence of polynomials $f^{(n)}$, $n = 0, \dots, N$, where $f^{(0)} = f$ and $f^{(n+1)} = H_{\mathfrak D}(f^{(n)})$ for $n = 1, \dots, N-1$. $N$ should be chosen large enough, so that the polynomial $f^{(N)}$ is sufficiently close to a polynomial $g(x) = C(x - t_1)^2(x - t_2)^2(x - t_3)^2$, where $t_j \in D_j$ for $j = 1,2,3$.
			\item For each $n = 0,\dots,N-1$ calculate the matrices $\mathfrak H^{(n)} = \mathfrak H^{p_1^{(n)}, p_2^{(n)}, p_3^{(n)}}$, $\mathfrak A_{p_1^{(n)}, p_2^{(n)}, p_3^{(n)}}$, and $\mathfrak A_{p_1^{(n)}, p_2^{(n)}, p_3^{(n)}}^{(jk)}$, where $f^{(n)} = p_1^{(n)}p_2^{(n)}p_3^{(n)}$ is the decomposition associated with the triple $\mathfrak D$.
			\item Using the formulas~\eqref{eqSAllLimit} (and their derivatives) calculate approximations $\mathscr S_{N}$, $\mathscr S_{N}^{(1)}$, $\mathscr S_{N}^{(2)}$ of the vectors
			\[
			\mathscr S_{f^{(N)}}(z), \;\;\frac{\partial \mathscr S_{f^{(N)}}}{\partial z_1}(z), \;\; \frac{\partial \mathscr S_{f^{(N)}}}{\partial z_2}(z)
			\]
			respectively.
			\item For all $n = N - 1, \dots, 0$ recursively calculate approximations $\mathscr S_{n}$, $\mathscr S_{n}^{(1)}$, $\mathscr S_{n}^{(2)}$ of the vectors
			\[
			\mathscr S_{f^{(n)}}(z), \;\;\frac{\partial \mathscr S_{f^{(n)}}}{\partial z_1}(z), \;\; \frac{\partial \mathscr S_{f^{(n)}}}{\partial z_2}(z)
			\]
			by substituting $\mathscr S_{n+1}$, $\mathscr S_{n+1}^{(1)}$, $\mathscr S_{n+1}^{(2)}$ into the right-hand side of the formula~\eqref{eqSMainTransform} (and formulas that are obtained from it by differentiating).
			\item The vectors
			\[
			\mathscr S_{0} = \begin{pmatrix}
				S & S_{22} & S_{12} & S_{11}
			\end{pmatrix}^T,\;\;\mathscr S_{0}^{(j)} = \begin{pmatrix}
				S^{(j)} & S_{22}^{(j)} & S_{12}^{(j)} & S_{11}^{(j)}
			\end{pmatrix}^T,\; j =1,2
			\]
			are approximations of the vectors
			\[
			\mathscr S_{f}(z), \;\;\frac{\partial \mathscr S_{f}}{\partial z_j}(z),\;j = 1,2.
			\]
			Assuming that $S^f(z) \ne 0$ (i.e. $z$ does not belong to the polar set of functions $\wp_{jk}^f$) the values $S_{jk}/S$ are the approximations of the values $\wp_{jk}^f$ for all $(j,k) = (1,1),(1,2),(2,2)$. \stepcounter{enumi}
			\item[(\arabic{enumi}*)] If $f$ is in Weierstrass form, then approximate $\sigma^f(2z)$ using duplication formula~\cite[Corollary~3.9]{KleinianWeight2}. That is, the value
			\[
			S_{12}S_{22}^{(1)} - S_{22}S_{12}^{(1)} + S_{11}S^{(1)} - SS_{11}^{(1)}
			\]
			is an approximation of $\sigma^f(2z)$. If $S^f(z) \ne 0$, then the values 
			\[
			\frac{S^{(1)}}{2S},\;\;\frac{S^{(2)}}{2S}
			\]
			are the approximations of $\zeta_1(z)$ and $\zeta_2(z)$.
		\end{enumerate}
	\end{algo}
	\begin{remarks} \hfill
		\begin{enumerate}[label=\arabic*.]
			\item Note that in order to calculate $\sigma^f(z)$ Algorithm~\ref{algFunctions} should be performed at the point $z/2$.
			\item Since the first two steps of the Algorithm~\ref{algFunctions} do not depend on $z$, the output of these steps can be precomputed and stored, if multiple calculations with a single polynomial $f$ are required.
			\item For the computation of the values $\mathscr S_f(z)$ and $\wp_{jk}^f(z)$ it suffices to perform a simpler version of Algorithm~\ref{algFunctions} which does not compute the derivatives of $\mathscr S_f(z)$.
			\item The main reason why the emphasis of Algorithm~\ref{algFunctions} is placed on the computation of canonical Kleinian functions of weight $2$ and their first derivatives, is the entire analyticity of the vector-valued function $\mathscr S_f$. It is easy to reformulate our results to obtain similar to Algorithm~\ref{algFunctions} computational procedures in terms of meromomorphic functions $\wp_{jk}^f$ and/or their derivatives. To our experience the performance of such algorithms in close vicinity of the polar locus drops significantly in comparison with Algorithm~\ref{algFunctions}. On the other hand, canonical Kleinian functions of weight $2$ and their first derivatives are sufficient to calculate the values of all other special functions that we consider in this work and their derivatives (for higher derivatives the expressions can be obtained via~\cite[Proposition~4.6]{KleinianWeight2}).
		\end{enumerate}
	\end{remarks}
	
	\subsection{Algorithm to compute the Abel map}
	For completeness we also sketch an algorithm that computes the Abel map $\mathscr A_f: \mathcal X_f^{(2)} \to \Jac_f$ (this map is defined on a degree $2$ divisor $D$ as the Abel map of $D - \mathfrak L_f$, where $\mathfrak L_f$ is the canonical hyperelliptic divisor class on $\mathcal X_f$; see~\cite{KleinianWeight2}). We shall exploit the fact, that the meromorphic functions $D \mapsto \wp_{jk}^f(\mathscr A_f(D))$ on $\mathcal X_f^{(2)}$ can be explicitly computed in terms of $D$. For these functions (which are denoted as $\xi_{jk}^f$) were given explicit formulas in~\cite[Section~2.2]{KleinianWeight2}). 
	
	Now we discuss the idea of the algorithm. Assume we are given $D \in \mathcal X_f^{(2)}$. Since we can compute $\wp$-functions at $z = \mathscr A_f(D)$, we can find the vector $\mathscr S_f(z)$ modulo multiplication by a scalar (that is, the vector with homogeneous coordinates $(1:\wp_{22}^f(z), \wp_{12}^f(z), \wp_{11}^f(z))$). That is, we can find the point on the Kummer surface of $\mathcal X_f$, which corresponds to $z$. Now, using the explicit formulas of~\cite{RichelotExpression}, it is possible to find the preimages of a given point with respect to the Richelot isogeny. Inverting the Richelot isogeny several times we arrive at the problem of calculating the Abel map for a curve $\mathcal X_g$ with $g$ being sufficiently close to a square of a polynomial of degree $3$, for which the Abel map can be easily approximated. However, this computation will allow us only to compute the vector $z$ up to a sign change. To overcome this indeterminacy it is possible to apply Algorithm~\ref{algFunctions} and compute values of several odd meromorphic functions at $z$ and compare the result with explicit formulas that use coordinates of summands in the divisor $D$. 
	
	As usual, let $f \in \mathfrak P_6$ be an admissible polynomial that is subordinate to $\mathfrak D = (D_1,D_2,D_3)$. Moreover, fix $D = (P) + (Q) \in \mathcal X_f^{(2)}$. The following algorithm calculates a vector that is approximately $\mathscr A_f(D)$ modulo $\Per_f$. The final step of this algorithm is divided into two possibilities. That is, we separately treat the divisors $D$ such that one the summands (either $P$, or $Q$) is at infinity.
	
	\begin{algo}\label{algAbelMap}
		\begin{enumerate}
			\item Calculate the sequence of polynomials $f^{(n)}$, $n = 0, \dots, N$, where $f^{(0)} = f$ and $f^{(n+1)} = H_{\mathfrak D}(f^{(n)})$ for $n = 1, \dots, N-1$. $N$ should be chosen large enough, so that the polynomial $f^{(N)}$ is sufficiently close to a polynomial $g(x) = c(x - t_1)^2(x - t_2)^2(x - t_3)^2$, where $t_j \in D_j$ for $j = 1,2,3$.
			\item Calculate the vector $v_0$ that consists of homogeneous coordinates of $\mathscr S_f(\mathscr A_f(D))$ using explicit expressions for the values $\xi_{jk}^f(D)$ (see~\cite[Section~2.2]{KleinianWeight2}). Normalize $v_0$ so that its Euclidean norm is $1$.
			\item\label{algAbelMapStepPreimage} For each $n = 1,2,\dots N$ calculate the vector $v_n$ such that the following properties hold.
			\begin{enumerate}[label=(\alph*)]
				\item\label{preima} The Euclidean norm of $v_n$ is equal to $1$.
				\item\label{preimb} The element $u_n$ of the space $\CC\PP(3)$, whose homogeneous coordinates are components of $v_n$, belongs to $\mathcal K_{f^{(n)}}$ (the Kummer surface of $\mathcal X_f$; see~\cite[Section~2.3]{RichelotExpression}). Moreover, the Richelot isogeny $\mathcal K_{f^{(n)}} \to \mathcal K_{f^{(n-1)}}$ maps $u_n$ to $u_{n-1}$.
				\item\label{preimc} The vector $v_n$ is the closest vector to $v_{n-1}$ among those vectors that satisfy~\ref{preima} and~\ref{preimb}.
			\end{enumerate}
			\item Let $v_N = \begin{pmatrix}
				\alpha & \beta & \gamma & \delta
			\end{pmatrix}^T$. Then calculate the roots $x_1$ and $x_2$ of the polynomial $\alpha x^2 - \beta x - \gamma$. 
			\item Let
			\[z = \begin{pmatrix}
				\displaystyle\int_{x_1}^{x_2} \frac{dx}{\sqrt{c}(x - t_1)(x- t_2)(x - t_3)} & \displaystyle\int_{x_1}^{x_2} \frac{xdx}{\sqrt{c}(x - t_1)(x- t_2)(x - t_3)}
			\end{pmatrix}^T.\]
			The vector $z$ is an approximation either for $\mathscr A_f(D)$, or $-\mathscr A_f(D)$.  \stepcounter{enumi}
			\item[{\crtcrossreflabel{(\arabic{enumi}a)}[algStepSignFinite]}] Assume that $D = (P) + (Q)$, where both $P = (x_1,y_1)$ and $Q = (x_2,y_2)$ are not at infinity. Then use Algorithm~\ref{algFunctions} to calculate
			\[
			\frac{\partial \wp_{22}^f}{\partial z_2}(z),\;\;\frac{\partial \wp_{12}^f}{\partial z_2}(z).
			\]
			On the other hand calculate
			\[
			\frac{\partial \wp_{22}^f}{\partial z_2}(\mathscr A_f(D)) = \frac{y_2 - y_1}{x_2 - x_1},\;\;\frac{\partial \wp_{12}^f}{\partial z_2}(\mathscr A_f(D)) = \frac{x_2 y_1 - x_1 y_2}{x_1 - x_2}.
			\]
			Compare these values to decide which of the vectors $\pm z$ is an approximation of $\mathscr A_f(D)$.
			\item[{\crtcrossreflabel{(\arabic{enumi}b)}[algStepSignInfinite]}] Assume that $D = (P) + (Q)$, where $P = (x,y)$, $x \ne 0$, and $Q$ is at infinity. Let $a$ be the value of the meromorphic function $y/x^3$ at $Q$ (so $a^2 = f_6$). Use Algorithm~\ref{algFunctions} to calculate
			\[
			\frac{\partial \ln S_{22}^f}{\partial z_2}(z) - \frac{\partial \ln S_{12}^f}{\partial z_2}(z),\;\;\frac{\partial \ln S_{22}^f}{\partial z_1}(z) - \frac{\partial \ln S_{12}^f}{\partial z_1}(z).
			\]
			On the other hand, compute
			\[
			\begin{gathered}
				\frac{\partial \ln S_{22}^f}{\partial z_2}(\mathscr A_f(D)) - \frac{\partial \ln S_{12}^f}{\partial z_2}(\mathscr A_f(D)) = -ax,\\ \frac{\partial \ln S_{22}^f}{\partial z_1}(\mathscr A_f(D)) - \frac{\partial \ln S_{12}^f}{\partial z_1}(\mathscr A_f(D)) = ax^2 - \frac{y}{x}.
			\end{gathered}
			\]
			Again, decide on the sign of the vector $z$ by comparing these calculations.
		\end{enumerate}
	\end{algo}
	\begin{remarks}\hfill
		\begin{enumerate}
			\item The most non-trivial part of the Algorithm~\ref{algAbelMap} is Step~\ref{algAbelMapStepPreimage}. There are several ways to find all preimages of a given point in the Kummer surface with respect to Richelot isogeny. The simplest of them is to use the expression for Richelot isogeny given in \cite[Theorem~4.4]{RichelotExpression}. Using the formula given there it is easy to find that there are $8$ distinct preimages modulo multiplication by scalars. Only $4$ of them belong to the Kummer surface and to determine those one can substitute their coordinates into the equation of the Kummer surface (which is the relation of \cite[Proposition~2.2]{KleinianWeight2}).
			\item The explicit formulas for the derivatives of meromorphic functions on $\Jac_f$ used in Steps~\ref{algStepSignFinite} and~\ref{algStepSignInfinite} were obtained by the method described in the proof of \cite[Proposition~4.6]{KleinianWeight2}. Moreover, it can be verified that for a divisor $D = (P) + (Q)$, where both $P = (x_1,y_1)$ and $Q = (x_2,y_2)$ are not at infinity, the equalities
			\[
			\begin{gathered}
				\frac{\partial \ln S_{22}^f}{\partial z_2}(\mathscr A_f(D)) - \frac{\partial \ln S_{12}^f}{\partial z_2}(\mathscr A_f(D)) = \frac{x_2^2 y_1 - x_1^2 y_2}{x_1x_2(x_2^2 - x_1^2)},\\ \frac{\partial \ln S_{22}^f}{\partial z_1}(\mathscr A_f(D)) - \frac{\partial \ln S_{12}^f}{\partial z_1}(\mathscr A_f(D)) = \frac{x_1^3 y_2 - x_2^3 y_1}{x_1x_2(x_2^2 - x_1^2)}
			\end{gathered}
			\]
			hold. Thus, the functions that we used to test the sign in Step~\ref{algStepSignInfinite} are applicable to divisors such that $x$-coordinates of both summands do not vanish. 
			\item The Step~\ref{algStepSignFinite} has several subtleties that can occur in special cases. At first note that if $P = Q$, then the calculation of the values 
			\[
			\frac{\partial \wp_{22}^f}{\partial z_2}(\mathscr A_f(D)) = \frac{y_2 - y_1}{x_2 - x_1},\;\;\frac{\partial \wp_{12}^f}{\partial z_2}(z) = \frac{x_2 y_1 - x_1 y_2}{x_1 - x_2}
			\] 
			requires passing to the limit. Moreover, it is possible that both these values are equal to $0$, so these values do not help to determine the sign of $z$. Nevertheless, in this case it is easy to see that both $P$ and $Q$ have to be Weierstrass points, so $z$ is a point of order $2$. That is, the sign does not matter in this case.
			\item In order to simplify dealing with special cases of the Steps~\ref{algStepSignFinite} and~\ref{algStepSignInfinite} it is possible to implement the following procedure. Generate a random point $R \in \mathcal X_f$ and apply Algorithm~\ref{algAbelMap} to divisors $D' = (P) + (R)$ and $D'' = (\mathscr J_f(R)) + (Q)$. Since the point $R$ is chosen randomly, we can assume that $D'$ and $D''$ are generic. Finally, we can calculate $\mathscr A_f(D) = \mathscr A_f(D') + \mathscr A_f(D'')$.
		\end{enumerate}
	\end{remarks}
	
	\printbibliography
	
\end{document}